\definecolor{darkgreen}{rgb}{0,0.45,0}
\definecolor{darkred}{rgb}{0.75,0,0}
\definecolor{darkblue}{rgb}{0,0,0.6}
\DeclareMathOperator{\ad}{ad}
\newcommand{\one}{\mathbbm{1}}
\theoremstyle{plain}
\newtheorem{theorem}{Theorem}[section]
\newtheorem{lemma}[theorem]{Lemma}
\newtheorem{prop}[theorem]{Proposition}
\newtheorem{corollary}[theorem]{Corollary}
\newtheorem{thm}[theorem]{Theorem}
\newtheorem{conj}[theorem]{Conjecture}
\theoremstyle{remark}
\newtheorem{remark}[theorem]{Remark}
\theoremstyle{definition}
\newtheorem{definition}[theorem]{Definition}
\newtheorem{exmp}[theorem]{Example}
\numberwithin{equation}{section}
\newcommand{\Z}{\mathbb Z}
\newcommand{\A}{\mathbb A}
\newcommand{\Q}{\mathbb Q}
\newcommand{\C}{\mathbb C}
\newcommand{\G}{\mathbb G}
\newcommand{\RR}{\mathbb R}
\newcommand{\X}{\mathbb X}
\newcommand{\GL}{\operatorname{GL}}
\let\sl\relax
\newcommand{\sl}{\mathfrak{sl}}
\newcommand{\PU}{\operatorname{PU}}
\newcommand{\U}{\operatorname{U}}
\newcommand{\Hom}{\operatorname{Hom}}
\renewcommand{\O}{\mathcal O}
\newcommand{\g}{\mathfrak g}
\newcommand{\Sym}{\operatorname{Sym}}
\newcommand{\Aut}{\operatorname{Aut}}
\newcommand{\Nm}{\operatorname{Nm}}
\newcommand{\SU}{\operatorname{SU}}
\newcommand{\R}{\operatorname{R}}
\newcommand{\id}{\operatorname{id}}
\newcommand{\coker}{\operatorname{coker}}
\newcommand{\End}{\operatorname{End}}
\newcommand{\Ind}{\operatorname{Ind}}
\newcommand{\disc}{\operatorname{disc}}
\newcommand{\cusp}{\operatorname{cusp}}
\newcommand{\vol}{\operatorname{vol}}
\newcommand{\pr}{\operatorname{pr}}
\newcommand{\ra}{\ensuremath{\displaystyle\mathop{\rightarrow}}}
\newcommand{\la}{\ensuremath{\displaystyle\mathop{\leftarrow}}}
\newcommand{\ang}[1]{\langle #1\rangle}
\newcommand{\set}[1]{\{ #1\}}
\title{Gross's Conjecture: the dihedral case}
\author[P. Baki\'{c}]{Petar Baki\'{c}}
\address{Department of Mathematics\\University of Utah\\155 S 1400 E\\Salt Lake City, UT 84112}
\email{bakic@math.utah.edu}
\author[A. Horawa]{Aleksander Horawa}
\address{Mathematical Institute\\University of Bonn\\
Endenicher Allee 60 \\ 53115 Bonn, Germany}
\email{horawa@math.uni-bonn.de}
\author[S. D. Li-Huerta]{Siyan Daniel Li-Huerta}
\address{Department of Mathematics\\Massachusetts Institute of Technology\\77 Massachusetts Avenue\\Cambridge, MA 02139}
\email{sdlh@mit.edu}
\author[N. Sweeting]{Naomi Sweeting}
\address{Department of Mathematics\\Princeton University\\Fine Hall, Washington Road\\Princeton, NJ 08540}
\email{naomiss@math.princeton.edu}
\date{}
\begin{document}

\maketitle

\begin{abstract}
    Quaternionic modular forms on $\mathsf{G}_2$ carry a surprisingly rich arithmetic structure. For example, they have a theory of Fourier expansions where the Fourier coefficients are indexed by totally real cubic rings. For quaternionic modular forms on $\mathsf{G}_2$ associated via functoriality with certain modular forms on $\mathrm{PGL}_2$, Gross conjectured in 2000 that their Fourier coefficients encode $L$-values of cubic twists of the modular form (echoing Waldspurger's work on Fourier coefficients of half-integral weight modular forms). We prove Gross's conjecture when the modular forms are dihedral, giving the first examples for which it is known.
\end{abstract}

\setcounter{tocdepth}{1}
\tableofcontents

\section{Introduction}
To any holomorphic modular cusp form $f$ of even weight $2k$, one can associate its Shimura lift $\mathcal{F}$ \cite{Shi73}, which is a holomorphic modular cusp form of weight $k+\frac12$. Waldspurger \cite{Waldspurger_Sur_les_coeff} discovered a remarkable relationship between the Fourier coefficients of $\mathcal{F}$ and the $L$-values of quadratic twists of $f$. This led, for example, to Tunnell's partial resolution \cite{Tunnell} of the congruent number problem. 

The goal of this paper is to prove a similar relationship between the Fourier coefficients of certain \emph{quaternionic modular forms} on $\mathsf{G}_2$ and the $L$-values of \emph{cubic} twists of certain holomorphic modular forms. This was conjectured by Gross in 2000, and we prove his conjecture for the first class of examples: the dihedral case.

\subsection{Quaternionic modular forms on $\mathsf{G}_2$}
Let $G$ be a connected reductive group over $\Q$. When the symmetric space associated with $G(\RR)$ is Hermitian, there is a natural generalization of holomorphic modular forms to $G$: automorphic forms on $G$ that generate a holomorphic discrete series over $G(\RR)$. These automorphic forms are well-known to have rich connections with arithmetic.

When $G$ is the split simple group of type $\mathsf{G}_2$, the real Lie group $G(\RR)$ does \emph{not} have holomorphic discrete series. Nonetheless, Gross--Wallach \cite{GW96} singled out a class of representations $\{\pi_k\}_{k\geq1}$ of $G(\RR)$ called \emph{quaternionic discrete series}\footnote{Strictly speaking, $\pi_1$ is only a limit of quaternionic discrete series, but this does not matter for our purposes.}, and Gan--Gross--Savin \cite{GGS02} initiated the arithmetic study of \emph{quaternionic modular forms (of weight $k$)}, that is, automorphic forms $\mathcal{F}$ on $G$ that generate $\pi_k$ over $G(\RR)$.

Pollack \cite{MR4094735} developed the following explicit theory of Fourier expansions for quaternionic modular forms. The group $G$ has a Heisenberg parabolic with Levi subgroup $M\cong\GL_2$ and unipotent radical $N$. Write $Z$ for the center of $N$, write $\X$ for $\Hom(N/Z,\G_a)$, and write $\langle-,-\rangle:\X\times(N/Z)\ra\G_a$ for the evaluation pairing. For all $\mathcal{E}$ in $\X(\Q)$, Pollack defines an explicit function $\mathcal{W}^{\mathcal{E}}:M(\RR)\ra\C$ such that, for any quaternionic modular form~$\mathcal{F}$, its $Z$-constant term $\mathcal{F}_Z(g)\coloneqq\int_{Z(\Q)\backslash Z(\A)}\mathcal{F}(zg)\dd{g}$ can be written as
\begin{align}\label{eqn:intro_FE}
    \mathcal F_Z (n g) &  = \sum_{\mathcal E \in \mathbb X(\Q)} a_{\mathcal E}(\mathcal F) e^{-2\pi i \langle \mathcal E, n \rangle} \mathcal{W}^{\mathcal E}(g) \quad\mbox{for all } g \in M(\RR)\mbox{ and }n \in N(\RR),
\end{align}
where the $a_{\mathcal{E}}(\mathcal{F})$ lie in $\C$ \cite[Corollary 1.2.3]{MR4094735}. Since the representation $\X$ of $M\cong\GL_2$ is isomorphic to $\Sym^3\otimes\det^{-1}$, a classic result of Delone--Faddeev \cite{DF40} shows that $M(\Q)$-orbits in $\X(\Q)$ correspond to cubic algebras over $\Q$. In fact, their work refines to show that $M(\Z)$-orbits in $\X(\Z)$ correspond to cubic algebras over $\Z$. When $\mathcal{E}\in\X(\Q)$ corresponds to an \'etale cubic algebra $E/\Q$, one can show that $a_\mathcal{E}(\mathcal{F})$ vanishes unless $E$ is totally real.

Examples of quaternionic modular forms include certain Eisenstein series, whose Fourier coefficients $a_{\mathcal{E}}(\mathcal{F})$ have been studied extensively by Jiang--Rallis \cite{Jiang_Rallis}, Gan--Gross--Savin \cite{GGS02}, and Xiong \cite{Xiong}.

\subsection{Gross's conjecture}
What about \emph{cuspidal} examples of quaternionic modular forms? For any holomorphic modular cusp form $f$ of even weight $2k$ with level $1$ and trivial character, Arthur's conjecture \cite{Arthur} predicts a cuspidal quaternionic modular form $\mathcal{F}$ of weight $k$ on $G$ associated with $f$ by Langlands functoriality. In a manner analogous to Shimura lifts, Gan--Gurevich \cite{Gan_Gurevich} gave a conjectural construction of~$\mathcal{F}$, assuming that $L(\frac12,f)\neq0$.

Gross conjectured the following analogue of Waldspurger's theorem \cite{Waldspurger_Sur_les_coeff}:
\begin{conj}[Gross \cite{ChaoLi}]\label{conj:Gross}
Assume that $f$ has level $1$. For all $\mathcal{E}\in\X(\Z)$ corresponding to the ring of integers of a totally real \'etale cubic algebra $E/\Q$, we have
\begin{align*}
    a_{\mathcal{E}}(\mathcal{F})^2 = L(\textstyle \frac 1 2, f \otimes V_E) \cdot \Delta_{E}^{k-\frac 1 2},
\end{align*}
where $V_E$ is the $2$-dimensional Artin representation with $\Ind_E^\Q\one=\one\oplus V_E$, and $\Delta_E$ is the discriminant of $E$.
\end{conj}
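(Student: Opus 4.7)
Since $f$ is dihedral, write $f = \theta(\chi)$ for a Hecke character $\chi$ of an imaginary quadratic field $K/\Q$ of the appropriate infinity type, so that $L(s,f \otimes V_E)$ factors as a product of Hecke $L$-functions of $K$ (or of the compositum $KE$), according to how $V_E$ restricts from $\Gal(\overline\Q/\Q)$ to $\Gal(\overline\Q/K)$. My plan is to construct $\mathcal F$ explicitly from $\chi$, compute the Fourier coefficients $a_{\mathcal E}(\mathcal F)$ directly as toric periods of $\chi$, and match their squares to the factorized $L$-value via Hecke's theorem and class field theory.

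For the explicit construction, I would try to realize $\mathcal F$ as an exceptional theta lift that can be ``unwound'' through $K$. A natural route uses the $\PGL_2 \times \mathsf G_2$ theta correspondence inside $\mathrm F_4$ (or the $\mathrm{PGU}_3 \times \mathsf G_2$ correspondence inside $\mathrm E_6$): because the $\PGL_2$-input $f$ itself arises by automorphic induction from $\chi$, a seesaw between $\PGL_2 \times \mathsf G_2$ and $(\R_{K/\Q}\G_m) \times H$, for an appropriate subgroup $H \subset \mathsf G_2$ stabilizing a $K$-structure, should realize $\mathcal F$ as a theta series built directly from $\chi$. This would also remove the conditional nature of Gan--Gurevich's construction in the dihedral setting.

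With $\mathcal F$ realized explicitly, I would compute the $Z$-constant term in \eqref{eqn:intro_FE} by an unfolding argument. Using Delone--Faddeev to identify the $M(\Z)$-orbit of $\mathcal E$ with the order $\mathcal O_E$, the unfolding should exhibit $a_{\mathcal E}(\mathcal F)$ as a global toric period of $\chi$ over an anisotropic torus $T_{\mathcal E}$ inside $\R_{KE/\Q}\G_m$ determined by $\mathcal E$, times a product of local orbital integrals. Squaring this and applying Hecke's formula for abelian $L$-functions, together with the factorization from the first paragraph, should yield the right-hand side of Conjecture~\ref{conj:Gross} up to an explicit discriminant factor.

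The main obstacle is the first step: pinning down the explicit construction of $\mathcal F$ in the dihedral case. Gan--Gurevich's lift is conditional on $L(\tfrac12, f) \neq 0$ and is not obviously compatible with the quadratic subfield $K$, so one must either re-derive Fourier expansions for dihedral input within their framework or produce a direct theta-lift construction adapted to $K$ and then verify that it agrees with the functorial lift of $f$ at every place. A secondary obstacle is the archimedean computation: Pollack's generalized Whittaker function $\mathcal W^{\mathcal E}$ must be matched against an archimedean orbital integral inside the quaternionic discrete series $\pi_k$ to produce the precise normalization $\Delta_E^{k - \frac12}$.
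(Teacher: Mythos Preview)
Your broad architecture---build $\mathcal F$ via an exceptional theta lift, unfold the Fourier coefficient to a torus period, and square that period into an $L$-value---is the same as the paper's. But the specific choices you make at each stage diverge from what actually works, and some of them are genuine dead ends.

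First, the dual pair. You reach for $\PGL_2\times\mathsf G_2\subset\mathrm F_4$ and then propose a seesaw inside $\mathrm F_4$ against $(\R_{K/\Q}\G_m)\times H$ for some unspecified $H$. No such seesaw is available: there is no obvious way to see the quadratic field $K$ inside $\mathrm F_4$ that would produce a compatible reductive pair. The paper instead lifts from the \emph{compact} form $G'=\PU_3$ (with respect to $K/\Q$) to $\mathsf G_2$ inside the quasi-split adjoint $\mathsf E_6$ attached to $K/\Q$. The dihedral structure is already built in: the automorphic representation $\sigma^\epsilon$ of $G'$ that one lifts is itself a classical theta lift of the trivial character of $\U_1$, and $\chi$ enters through the splitting of the metaplectic cover.

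Second, the unfolding. After one unfolds $\theta(\varphi,f')_{N,\psi_{\mathcal E}}$, what appears is a period of $f'\in\sigma^\epsilon$ over a maximal torus $i(T_E)\subset G'$ determined by an embedding $E\hookrightarrow J$ of the cubic algebra into the Jordan algebra of Hermitian $3\times 3$ matrices---not a period of $\chi$ over a torus inside $\R_{KE/\Q}\G_m$. The seesaw then occurs at the \emph{classical} level, not the exceptional one: it is the standard $\U_1\times\U_3$ versus $\R_{E/F}\R^1_{L/E}\G_m\times\R_{E/F}\R^1_{L/E}\G_m$ seesaw in $\mathrm{Sp}_6$, which converts the $T_E$-period on $\sigma^\epsilon$ into a theta lift between one-dimensional unitary groups over $E$. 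That is what is then related to $L(\tfrac12,\chi|_{KE})$ by T.~Yang's explicit formula---not ``Hecke's theorem,'' which does not give the central-value period relation you need.

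Two further points you should be aware of. Dihedral $f$ never has level $1$ (indeed the level is not even squarefree), so one cannot literally address Conjecture~\ref{conj:Gross} as stated; one must formulate and prove the right level-$N$ analogue, which introduces local sign conditions $\epsilon_p(E_p,\chi_p)$ governing which member $\pi^\epsilon$ of the Arthur packet carries the nonzero Fourier coefficient. And the method only yields $|a_{\mathcal E}(\mathcal F^\epsilon)|^2$, not $a_{\mathcal E}(\mathcal F^\epsilon)^2$: the squaring comes from pairing the global period formula with its complex conjugate, and there is no mechanism here to remove the absolute value. Your archimedean concern is well-placed; the paper handles it by applying an explicit raising operator in $\widetilde{\mathfrak g}_\C$ to the minimal $\widetilde K$-type of the minimal representation of $\mathsf E_6$ and computing via Pollack's generalized Whittaker functions, which is where the factor $\Delta_E^{(k-1)/2}$ emerges.
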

Since $f$ has level $1$, the form $\mathcal F$ is invariant under $G(\widehat{\mathbb{Z}})$, which implies that $a_{\mathcal{E}}(\mathcal{F})$ vanishes unless $\mathcal{E}\in\X(\Z)$.

More generally, for any holomorphic modular cusp form $f$ with trivial character, Arthur's conjecture \cite{Arthur} predicts \emph{multiple} cuspidal quaternionic modular forms on $G$ associated with $f$.

Our paper studies the case where $f$ is dihedral. Namely, let $K/\Q$ be an imaginary quadratic extension, let~$\chi$ be a conjugate-symplectic Hecke character for $K$ with $L(\frac12,\chi)\neq0$, and take $f$ to be the associated dihedral modular cusp form. Then the level $N$ of $f$ must be nontrivial; in fact, $N$ is necessarily not squarefree.

Here, Arthur's conjecture \cite{Arthur} predicts that, for every sequence $\epsilon=(\epsilon_p)_p$ in $\{\pm1\}$ indexed by primes $p$ with
\begin{itemize}
    \item $\epsilon_p=+1$ when $p$ splits in $K$ or $\chi_p^2=1$ (which includes all $p$ not dividing $N$),
    \item $\prod_p\epsilon_p=-\epsilon(\frac12,\chi^3)$,
\end{itemize}
there should be a cuspidal automorphic representation $\pi^\epsilon$ of $G$ associated with $f$ whose $p$-adic component is explicitly determined by $\epsilon_p$ and $\chi_p$ and whose archimedean component is $\pi_k$. In our previous work \cite{ourselves}, we proved Arthur's conjecture in this case (assuming that $K/\Q$ is unramified at $2$); in particular, we gave an unconditional definition of $\pi^\epsilon$. 

For any quaternionic modular form $\mathcal{F}^\epsilon\in\pi^\epsilon$ and for all $\mathcal{E}\in\X(\Q)$ corresponding to an \'etale cubic algebra $E/\Q$, we show that $a_{\mathcal{E}}(\mathcal{F}^\epsilon)$ vanishes for local reasons unless
\begin{itemize}
    \item $\epsilon_p=\epsilon_p(E_p,\chi_p)$ for all primes $p$, where $\epsilon_p(E_p,\chi_p)\in\{\pm1\}$ is purely local (see Definition \ref{defn:cubicepsilon}\footnote{While Definition \ref{defn:cubicepsilon} depends on a continuous character $\psi_p:\Q_p\ra\C^\times$, in the introduction we fix $\psi:\Q\backslash\A\ra\C^\times$ to be the unique continuous character such that $\psi_\infty(x)=e^{-2\pi i x}$.}),
    \item $E$ is totally real (which is the archimedean analogue of the above $\epsilon$-condition).
\end{itemize}
When $p$ does not divide $N$, we show that $\epsilon_p(E_p,\chi_p)=+1$ for all \'etale cubic algebras $E_p/\Q_p$; this explains why the above $\epsilon$-condition does not appear in Conjecture \ref{conj:Gross}.

We construct a quaternionic modular form $\mathcal{F}^{\epsilon}\in\pi^\epsilon$ satisfying the following version of Conjecture \ref{conj:Gross}, which takes into account the aforementioned local obstructions:
\begin{thm}[Theorem \ref{thm:refined}]\label{thm:main}Assume that $L(\frac12,f)\neq0$. For all $\mathcal{E}\in\X(\Q)$, the Fourier coefficient $a_{\mathcal{E}}(\mathcal{F}^\epsilon)$ vanishes unless $\mathcal{E}\in\X(\Z)$. Moreover, if $\mathcal{E}\in\X(\Z)$ corresponds to the ring of integers of a totally real \'etale cubic algebra $E/\Q$ such that $\epsilon_p=\epsilon_p(E_p,\chi_p)$ for all $p$ dividing $N$, then
\begin{align*}
    |a_{\mathcal{E}}(\mathcal{F}^{\epsilon})|^2 = L(\textstyle \frac 1 2, f \otimes V_E) \cdot \Delta_{E}^{k-\frac 1 2}.
\end{align*}
\end{thm}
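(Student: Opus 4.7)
The plan is to reduce Theorem \ref{thm:main} to an explicit toric period computation via the global construction of $\pi^\epsilon$ carried out in our previous work \cite{ourselves}. There, $\pi^\epsilon$ is realized as the exceptional theta lift of an automorphic representation $\sigma^\epsilon$ on a smaller reductive group $H$, together with an explicit choice of Schwartz function on the theta kernel. Accordingly, I would fix a test vector $\phi^\epsilon \in \sigma^\epsilon$ pinning down a specific $\mathcal{F}^\epsilon \in \pi^\epsilon$ as the corresponding theta lift. Computing the Heisenberg Fourier expansion \eqref{eqn:intro_FE} of $\mathcal{F}^\epsilon$ directly from the theta kernel should express $a_\mathcal{E}(\mathcal{F}^\epsilon)$ as a period integral over $H(\Q)\backslash H(\A)$ of $\phi^\epsilon$ against an explicit Schwartz function $\omega_\mathcal{E}$ indexed by $\mathcal{E}$. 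The shape of $\omega_\mathcal{E}$ should force the integral to collapse onto the stabilizer $T_\mathcal{E} \subseteq H$ of $\mathcal{E}$, which---via the Delone--Faddeev parametrization---is a torus built functorially from the cubic algebra $E$ and the imaginary quadratic field $K$.

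The see-saw identity applied to $|a_\mathcal{E}(\mathcal{F}^\epsilon)|^2$ then converts it into a toric period of $\chi$ on $T_\mathcal{E}(\A)$ times a product of local integrals. This global toric period can be evaluated via the theory of Hecke $L$-functions (or, equivalently, a direct application of Waldspurger's toric period formula). By Artin formalism, using $V_\chi = \Ind_{G_K}^{G_\Q}\chi$ (since $f$ is the automorphic induction of $\chi$) together with $\one \oplus V_E = \Ind_{G_E}^{G_\Q}\one$, the resulting $L$-function factorization yields the global contribution $L(\frac12, f \otimes V_E)$, while the factor $\Delta_E^{k-\frac12}$ should emerge from the archimedean computation against Pollack's function $\mathcal{W}^\mathcal{E}$ of \cite{MR4094735}.

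The principal obstacle is the local computation at the primes $p \mid N$, where $\pi^\epsilon_p$ is a specific representation of $G(\Q_p)$ determined by $\epsilon_p$ and $\chi_p$. There, one must verify that the local period integrals produce exactly $1$ when $\epsilon_p = \epsilon_p(E_p,\chi_p)$ and vanish otherwise; this will require a careful case analysis based on the joint ramification of $E_p$, $K_p$, and $\chi_p$, using the explicit local models of $\pi^\epsilon_p$ developed in \cite{ourselves}. Pinning down local test vectors so that the exceptional theta correspondence at these places produces the correct local components---and hence the unadorned identity in Theorem \ref{thm:main}, with no stray local fudge factors---is the most delicate step of the argument.
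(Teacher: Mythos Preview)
Your outline matches the paper's strategy in broad strokes: $\pi^\epsilon$ is realized as the exceptional theta lift of a representation $\sigma^\epsilon$ on the compact form $G'=\PU_3$ (your unnamed $H$), and unwinding the Fourier coefficient of the theta lift does indeed reduce $a_{\mathcal E}(\mathcal F^\epsilon)$ to a torus period $\mathcal P_i$ on $G'$ times a product of local integrals. Three points where your sketch diverges from what the paper actually does are worth flagging.

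First, the seesaw is not applied to $|a_{\mathcal E}(\mathcal F^\epsilon)|^2$ itself. It is a \emph{classical unitary group} seesaw (between the dual pairs $\U_1\times\U_3$ over $\Q$ and $\R^1_{L/E}\G_m\times\R^1_{L/E}\G_m$ over $E$) used one level down, to compute the $T_E$-period $\mathcal P_i$ on $\PU_3$. The Fourier coefficient of the exceptional theta lift is handled separately, via the explicit $\widetilde P$-module structure of the minimal representation $\Omega$ of the ambient $\mathsf{E}_6$ group.

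Second, the torus period is not computed by Waldspurger's formula but by T.~Yang's explicit formulas \cite{Yan97} for $\U(1)$-theta lifts; this is what gives the precise constant $L(\tfrac12,f\otimes V_E)\cdot\Delta_E^{1/2}$ (the remaining power $\Delta_E^{k-1}$ comes, as you correctly guess, from the archimedean integral against Pollack's generalized Whittaker function).

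Third, and most importantly, at primes $p\mid N$ the paper does \emph{not} carry out any case analysis on the ramification of $E_p$, $K_p$, $\chi_p$. Instead it exploits the freedom in choosing $\varphi_p\in\Omega_p$: using the embedding $C_c^\infty(\O_{\min}(F_p))\hookrightarrow (\Omega_p)_{Z}$, one takes $\varphi_p$ to be (essentially) an indicator function of a small compact open neighborhood of the relevant point of $\O_{\min}$, so that the local integral collapses to the single value $\overline{\beta_i(f_p)}$ by construction. Thus what you identify as ``the most delicate step'' is handled by a soft test-vector argument rather than hard local computations; the genuine work lies at the unramified and archimedean places.
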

\begin{remark}
Conjecture \ref{conj:Gross} does not take absolute values, while we \emph{do} take absolute values in Theorem \ref{thm:main}. This is essential for our method, as we explain in \S\ref{ss:proofsketch} below. Something similar happens when extracting explicit results from Waldspurger's theorem \cite{Waldspurger_Sur_les_coeff}: precise formulas which calculate all the constants of proportionality all take absolute values \cite{Kohnen_Zagier, Kohnen, Baruch_Mao, Prasanna_On_the_Fourier}.
\end{remark}
\begin{remark}
We actually work over totally real fields $F$ and prove a generalization of Theorem \ref{thm:main} to dihedral Hilbert modular forms (see Theorem \ref{thm:refined}). We restrict to $F=\Q$ here for the sake of exposition.
\end{remark}

\subsection{Related work}
Fourier coefficients of cuspidal quaternionic modular forms have been extensively studied by Pollack; for example, he proved that, for all $k\geq6$, there exists a basis of cuspidal quaternionic modular forms on $G$ whose Fourier coefficients lie in $\Q^{\operatorname{ab}}$ \cite[Theorem 1.0.1]{Pollack_G_2}.

Let us explain why this is consistent with Theorem \ref{thm:main}. The projection formula yields
\begin{align*}
\Ind_K^\Q \chi \otimes \Ind_E^\Q \one = \Ind_{KE}^{\Q}(\chi|_{KE}),
\end{align*}
so we get $(\Ind_K^\Q \chi) \otimes V_E = \Ind_{KE}^\Q(\chi|_{KE})-\Ind_{K}^\Q \chi$. Taking $L$-functions gives
\begin{align*}
L({\textstyle\frac12}, f \otimes V_E) = \frac{L(\textstyle\frac{1}{2}, \chi|_{KE})}{L(\frac{1}{2}, \chi)}.
\end{align*}
Blasius's work \cite{Blasius} on Deligne's conjecture shows that there are periods $c^+(\chi), c^+(\chi|_{KE}) \in \C$ such that
\begin{align*}
\frac{L(\textstyle\frac{1}{2}, \chi)}{c^+(\chi)}, \frac{L(\frac{1}{2}, \chi|_{KE})}{c^+(\chi|_{KE})} \in \Q(\chi).
\end{align*}
If the motive for $\chi$ is realized in an abelian variety $A/\Q$ with CM by $K$, then the motive for $\chi|_{KE}$ is realized in $A/E$, so one can show that $c^+(\chi|_{KE})=c^+(\chi)^3$. Therefore we get
\begin{align*}
\frac{L(\textstyle\frac12,f\otimes V_E)}{c^+(\chi)^2}\in\Q(\chi)\subseteq\Q^{\operatorname{ab}}.
\end{align*}
Because $c^+(\chi)^2$ is independent of $E$, this yields the desired consistency with Pollack's result.

Pollack \cite[Corollary 1.2.4]{Pollack_G_2} also obtained the first result towards Gross's conjecture: when $f$ is the cusp form $\Delta$ of weight 12, Conjecture \ref{conj:Gross} is true when $\mathcal E$ corresponds to the ring of integers of $\Q \times F'$ for a totally real \'etale quadratic algebra $F'$ over $\Q$.

Recently, assuming Arthur's conjecture \cite{Arthur}, Kim--Yamauchi \cite[Theorem 1.4]{Kim_Yamauchi} generalized Pollack's \cite[Corollary 1.2.4]{Pollack_G_2} to all $f$ with squarefree level. Because the $f$ that we consider do not have squarefree level, their results are disjoint from ours. Their methods are also quite different: they only consider the case where $\epsilon_p=+1$ for all $p$, and by using explicit models for $\pi^+_p$ and studying the Fourier--Jacobi expansion of $\pi^\epsilon$ along the \emph{other} maximal parabolic subgroup of $G$, they relate $a_{\mathcal{E}}(\mathcal{F}^\epsilon)$ to the $D$-th Fourier coefficient of the Shimura lift of $f$ when $\mathcal{E}$ corresponds to the ring of integers of $\Q\times\Q(\sqrt{D})$. Finally, they relate the latter to $L(\frac12,f\otimes V_{\Q\times\Q(\sqrt{D})})=L(\frac12,f)L(\frac12,f\otimes\chi_D)$ using Waldspurger's theorem \cite{Waldspurger_Sur_les_coeff}.

\subsection{Idea of proof}\label{ss:proofsketch}
Let us explain the proof of Theorem \ref{thm:main}. We start with the construction of $\mathcal{F}^\epsilon$: it is an \emph{exceptional theta lift} from the compact form $G'$ of $\PU_3$ with respect to $K/\Q$, using theta kernels on the quasi-split adjoint form 
$\widetilde{G}$ of $\mathsf{E}_6$ with respect to $K/\Q$. More precisely, we associate a cuspidal automorphic representation $\sigma^\epsilon$ of $G'$ to $\epsilon$ and $\chi$, and for all $f'$ in $\sigma^\epsilon$ and $\varphi$ in the minimal representation $\Omega$ of $\widetilde{G}$, we construct a quaternionic modular form $\mathcal{F}^\epsilon\coloneqq\theta(\varphi,f')\in\pi^\epsilon$.

The Fourier coefficient $a_{\mathcal{E}}(\mathcal{F}^\epsilon)$ is explicitly related to the automorphic Fourier coefficient $\theta(\varphi,f')_{N,\psi_{\mathcal{E}}}(1)$, where $\psi_{\mathcal{E}}$ denotes the continuous character of $N(\A)$ associated with $\mathcal{E}$. Write $\widetilde{N}$ for the unipotent radical of the Heisenberg parabolic of $\widetilde{G}$. By studying the automorphic Fourier coefficients of the theta kernels $\theta(\varphi)$ along $\widetilde{N}$, we prove that
\begin{align}\label{eq:firstmaineq}
    \theta(\varphi,f')_{N,\psi_{\mathcal{E}}}(1)=\int_{i(T_E)(\A)\backslash G'(\A)}\theta(g'\cdot\varphi)_{\widetilde{N},\psi_i}(1)\overline{\mathcal{P}_i(g'\cdot f')}\dd{g'},
\end{align}
where $i:T_E\hookrightarrow G'$ is a certain maximal subtorus associated with the \'etale cubic algebra $E/\Q$, $\psi_i$ is a certain continuous character of $\widetilde N(\A)$ restricting to $\psi_{\mathcal{E}}$ on $N(\A)$, and
\begin{align*}
\mathcal{P}_i(f')\coloneqq\int_{i(T_E)(\Q)\backslash i(T_E)(\A)}f'(t')\dd{t'}
\end{align*}
is the period on $\sigma^\epsilon$ associated with $i:T_E\hookrightarrow G'$.

For certain special factorizable $f_i'$ in $\sigma^\epsilon$, we relate $|\mathcal{P}_i(f_i')|^2$ to $L(\frac12,f\otimes V_E)\cdot\Delta_E^{1/2}$ by combining a seesaw of (classical) unitary group theta lifts with explicit calculations of T. Yang \cite{Yan97}. To leverage this relationship, we prove that \eqref{eq:firstmaineq} also has a factorizable form
\begin{align*}
\theta(\varphi,f')_{N,\psi_{\mathcal{E}}}(1) = \mathcal{P}_i(f_i')\cdot\prod_v\mathcal{I}_v(\mathcal{E}_v,\varphi_v,f_v')
\end{align*}
whenever $\varphi=\otimes'_v\varphi_v$ and $f'=\otimes'_vf_v'$ are factorizable, where the $\mathcal{I}_v(\mathcal{E}_v,\varphi_v,f_v')$ are certain local integrals that incorporate both the discrepancy between $f'_v$ and $f'_{i,v}$ as well as the local Fourier coefficients of the local minimal representation $\Omega_v$. Hence it remains to compute $\mathcal{I}_v(\mathcal{E}_v,\varphi_v,f'_v)$ for appropriate choices of $\varphi_v$ and $f_v'$.

At $p$-adic places where $p$ does not divide $N$, we take $\varphi_p$ and $f'_p$ to be normalized spherical vectors in $\Omega_p$ and in the $p$-adic component $\sigma^{\epsilon_p}_p$ of $\sigma^\epsilon$, respectively. We prove that $f'_p$ is an (unspecified) $\C^1$-multiple of an $i(T_E)(\Q_p)$-translate of $f'_{i,p}$, which lets us reduce the computation of $|\mathcal{I}_p(\mathcal{E}_p,\varphi_p,f'_p)|$ to the following elementary statement (Lemma \ref{lem:double_coset}) and its Hermitian analogue:
\begin{center}
for all $p$-adic fields $F_p$, \'etale cubic algebras $E_p/F_p$, and $\O_{F_p}$-algebra injections $i:\O_{E_p}\hookrightarrow\operatorname{M}_3(\O_{F_p})$,\\
if $g$ in $\GL_3(F_p)$ satisfies $g^{-1}i(\O_{E_p})g\subseteq\operatorname{M}_3(\O_{F_p})$, then $g$ lies in $i(E^\times_p)\GL_3(\O_{F_p})$.
\end{center}

At the archimedean place, the local minimal representation $\Omega_\infty$ is actually a limit of quaternionic discrete series for the ambient group $\widetilde{G}(\RR)$, so we can study it using the work of Pollack \cite{MR4094735}. To ensure that $\theta_\infty(\varphi_\infty,f_\infty')$ is a highest weight vector in the lowest $K$-type of $\pi_k$, we first define a raising operator $\mathcal{D}^+_k$ in $\widetilde{\g}_\C$ and then take $\varphi_\infty$ to be $\mathcal{D}^+_k$ applied to a normalized highest weight vector in the lowest $\widetilde{K}$-type of $\Omega_\infty$. Since $G'(\RR)$ is compact, we can take $f'_\infty$ to be a normalized highest weight vector in the archimedean component $\sigma_\infty$ of~$\sigma^\epsilon$. With these choices, we compute $\mathcal{I}_\infty(\mathcal{E}_\infty,\varphi_\infty,f'_\infty)$ using work of Pollack \cite{MR4094735}. The result (Theorem \ref{thm:arch_integral}) crucially involves $\Delta_E^{(k-1)/2}$ as well as some factors that cancel with the definition of $\mathcal{W}^{\mathcal{E}}$ from \eqref{eqn:intro_FE}.

The above work already suffices to prove (Theorem \ref{thm:general}\footnote{There are also some local constants in Theorem \ref{thm:general}, which we ignore here for simplicity; alternatively, one can renormalize the definition of $\mathcal{I}_p(\mathcal{E}_p,\varphi_p,f'_p)$ to incorporate these constants.}) that, under the necessary local conditions,
\begin{align*}
|a_{\mathcal{E}}(\mathcal{F}^\epsilon)|^2 = L(\textstyle\frac12,f\otimes V_E)\cdot\Delta_E^{k-\frac  12}\cdot\displaystyle\prod_{p\mid N}|\mathcal{I}_p(\mathcal{E}_p,\varphi_p,f'_p)|^2.
\end{align*}
Finally, at $p$-adic places where $p$ divides $N$, we custom design $\varphi_p$ and $f'_p$ so that $\mathcal{I}_p(\mathcal{E}_p,\varphi_p,f'_p)$ equals $1$ when $\epsilon_p=\epsilon_p(\chi_p,E_p)$ and equals $0$ otherwise (Proposition \ref{prop:ramified_local_integrals}). This concludes the proof of Theorem \ref{thm:main}.

\subsection*{Outline}
In \S\ref{s:Yang}, we introduce the automorphic forms on $\PU_3$ that we will lift to $\mathsf{G}_2$ and compute their relevant torus periods. In \S\ref{s:formulas}, we define Fourier coefficients for quaternionic modular forms on $\mathsf{G}_2$, gather facts about the exceptional theta lift between $\PU_3$ and $\mathsf{G}_2$, and prove our main results modulo calculating certain local integrals. We calculate these local integrals at $p$-adic places where $p$ does not divide $N$ in \S\ref{s:unramified}, at $p$-adic places where $p$ divides $N$ in \S\ref{s:ramified}, and at archimedean places in \S\ref{s:archimedean}.

\subsection*{Notation}
Throughout this paper, $F$ is a field of characteristic $0$, and $K$ is a quadratic \'etale $F$-algebra. Write $k\mapsto\overline{k}$ for the nontrivial element of $\Aut_F(K)\cong\mathbb{Z}/2$.

When $F$ is a nonarchimedean local field, write $v$ for its normalized valuation, and write $\varpi$ for a choice of uniformizer. When $F$ is an archimedean local field, we always assume that $F=\RR$ and $K=\C$. We use the absolute value on $\C$ given by $z\mapsto\sqrt{z\overline{z}}$. Whenever possible and unless otherwise specified, all Haar measures give maximal compact subgroups volume $1$.

When $F$ is a number field, we always assume that $F$ is totally real and $K$ is totally imaginary. For any affine algebraic group $G$ over $F$, write $[G]$ for $G(F)\backslash G(\A_F)$. Our automorphic representations are all irreducible, contrary to our convention in \cite{ourselves}.

\subsection*{Acknowledgments}
We are indebted to Wee Teck Gan, Dick Gross, and Gordan Savin for initiating the arithmetic study of quaternionic modular forms on $\mathsf{G}_2$. In addition, we are extremely thankful to Wee Teck Gan and Dick Gross for suggesting this problem and for their continued interest and encouragement. Finally, we are very grateful to Aaron Pollack for helpful discussions.

The second-named author was supported by UK Research and Innovation grant MR/V021931/1. The third-named author was partially supported by NSF Grant \#DMS2303195 and the Max Planck Institute for Mathematics. The fourth-named author was supported by NSF grant \#DMS2401823.

For the purpose of Open Access, the authors have applied a CC BY public copyright licence to any Author Accepted Manuscript (AAM) version arising from this submission.

\section{Unitary group theta lifts}\label{s:Yang}
We will construct and study automorphic forms on $\mathsf{G}_2$ using an exceptional theta lift from $\PU_3$, so in this section we gather the necessary results about $\PU_3$. In \S\ref{ss:unitarysetup}, we begin with basic notation on $3$-dimensional Hermitian spaces, as well as their relation with cubic algebras. In \S\ref{ss:seesaw}, we introduce our unitary group theta lifts and seesaw, the latter of which is essential for our results on torus periods.

We take a break in \S\ref{ss:localPU3} to define spherical vectors in our local representations for later use. In \S\ref{ss:G'localvector} and \S\ref{ss:actuallocalvector}, we return to our torus periods and study them in the local context. Finally, we study the analogous global torus period in \S\ref{ss:Yang}. Our work relies on explicit calculations of T. Yang \cite{Yan97}.

This section can be viewed as a refinement of \cite[\S3]{ourselves} in the setting of this paper.

\subsection{Unitary groups and algebra embeddings}\label{ss:unitarysetup}
We begin by setting up our $3$-dimensional Hermitian spaces and explaining their relationship with Freudenthal--Jordan algebras.

Equip $K^3$ with the Hermitian form for $K/F$ given by $(v_1,v_2)\mapsto v_1\cdot\overline{v_2}$. Write $\U_3$ for its associated unitary group over $F$, and write $G'$ for the adjoint group of $\U_3$. Note that the discriminant of $K^3$ equals the image of $-1$ under $F^\times\ra F^\times/\Nm_{K/F}(K^\times)$.

For all $x$ and $y$ in $\operatorname{M}_3(K)$, write $x\circ y$ for $\frac12(xy+yx)$, and write $x^\#$ for the adjugate matrix of $x$. Write $J$ for the set of Hermitian matrices in $\operatorname{M}_3(K)$, and use $\circ$ to equip $J$ with the structure of a Freudenthal algebra over $F$ in the sense of \cite[\S37.C]{KMRT}. Now $G'$ acts on $J$ via conjugation, which identifies $G'$ with the connected automorphism group of $J$ over $F$.

Let $E$ be a cubic \'etale $F$-algebra, and write $L$ for $E\otimes_FK$. Write $T_E$ for the $2$-dimensional torus
\begin{align*}
\coker(\R^1_{K/F}\mathbb{G}_m\ra\R_{E/F}(\R^1_{L/E}\mathbb{G}_m))
\end{align*}
over $F$. Because $F^\times/\Nm_{K/F}(K^\times)$ is $2$-torsion and the composition
\begin{align*}
\xymatrix{F^\times/\Nm_{K/F}(K^\times)\ar[r] & E^\times/\Nm_{L/E}(L^\times)\ar[r]^-{\Nm_{E/F}} & F^\times/\Nm_{K/F}(K^\times)}
\end{align*}
is the cubing map, the left arrow is injective. Therefore $(\R^1_{L/E}\G_m)(E)\ra T_E(F)$ is surjective.

Write $\{E\hookrightarrow J\}$ for the set of $F$-algebra embeddings $E\hookrightarrow J$. Note that any $i$ in $\{E\hookrightarrow J\}$ induces an injective morphism $i:\R_{E/F}(\R^1_{L/E}\mathbb{G}_m)\hookrightarrow\U_3$ and hence $i:T_E\hookrightarrow G'$ of groups over $F$. Moreover, we see that the stabilizer of $i$ in $G'$ equals $i(T_E)$.

Let $\lambda$ be in $E^\times$. Write $L_\lambda$ for the associated $1$-dimensional Hermitian space for $L/E$, so that $\R^1_{L/E}\G_m$ is its associated unitary group over $E$. By postcomposing the Hermitian form with $\tr_{L/K}$, we view $L_\lambda$ as a $3$-dimensional Hermitian space for $K/F$. If we have an isomorphism $K^3\cong L_\lambda$ of Hermitian spaces for $K/F$, then we obtain an embedding $L\hookrightarrow\operatorname{M}_3(K)$ of $K$-algebras with involution and hence an element of $\{E\hookrightarrow J\}$. By \cite[Lemma 3.2]{ourselves}, every $i$ in $\{E\hookrightarrow J\}$ arises from this construction for some $\lambda$ in $E^\times$ and some isomorphism $K^3\cong L_\lambda$ of Hermitian spaces for $K/F$, and $\lambda_1$ and $\lambda_2$ in $E^\times$ induce the same $G'(F)$-orbit in $\{E\hookrightarrow J\}$ if and only if $\lambda_1\lambda_2^{-1}$ lies in $F^\times\Nm_{L/E}(L^\times)$.

\subsection{The seesaw and local representations of $G'$}\label{ss:seesaw}
We now introduce our unitary group theta lift over~$F$, which we will use to construct representations of $G'$. This theta lift fits into a seesaw with a unitary group theta lift over $E$, which we will use to compute torus periods of our representations of $G'$.

Let $\epsilon_1$ be in $F^\times$. Write $K_{\epsilon_1}$ for the associated $1$-dimensional Hermitian space for $K/F$, and write $\U_1$ for its associated unitary group over $F$. Let $\delta$ in $K^\times$ satisfy $\tr_{K/F}\delta=0$, and write $W$ for the symplectic space $K_{\epsilon_1}\otimes_KK^3=K^3$ over $F$, where the symplectic form is given by
\begin{align*}
(w_1,w_2)\mapsto\tr_{K/F}(\delta\epsilon_1w_1\cdot\overline{w_2}).
\end{align*}
Note that $W$ has a polarization given by $F^3\oplus\delta F^3$. When $K=F\times F$, it also has a polarization given by $(1,0)F^3\oplus(0,1)F^3$.

Given a $\lambda$ in $E^\times$ and an isomorphism $K^3\cong L_\lambda$ of Hermitian spaces for $K/F$ that give rise to $i$ as in \S\ref{ss:unitarysetup}, we get an isomorphism between $W$ and the analogous symplectic space over $F$ induced by $\delta$ and $L_{\epsilon_1}\otimes_LL_\lambda=L$. Then the analogue of \cite[(2.17)]{kudla1984seesaw} for unitary groups yields a seesaw of dual pairs in $\operatorname{Sp}_W$ over $F$
\begin{align}\label{eq:seesaw}
\begin{split}\xymatrix{\R_{E/F}\R^1_{L/E}\G_m\ar@{-}[rd] & \U_3\ar@{-}[ld]\\
\U_1\ar[u] & \R_{E/F}\R^1_{L/E}\G_m\ar[u]_-i.}\end{split}
\end{align}
Note that $L_{\epsilon_1}\otimes_LL_{\lambda}$ has a polarization given by $E\oplus\delta E$.

For the rest of this subsection, assume that $F$ is a local field or a number field. Write
\begin{align*}
C_F\coloneqq\begin{cases}
F^\times & F\mbox{ local field,}\\
F^\times\backslash\A_F^\times & F\mbox{ number field,}
\end{cases} & & A_F\coloneqq\begin{cases}
F & F\mbox{ local field,}\\
F\backslash\A_F & F\mbox{ number field,}
\end{cases}
& & R_F\coloneqq\begin{cases}
F & F\mbox{ local field,} \\
\A_F & F\mbox{ number field.}
\end{cases}
\end{align*}
Let $\chi:C_K\ra \C^1$ be a conjugate-symplectic unitary character, and let $\psi:A_F\ra \C^1$ be a nontrivial unitary character.
By \cite[Theorem 3.1]{Kud87}, the data of
\begin{itemize}
    \item $\psi$ and $(\chi,\chi^3)$ induces a lifting of $(\U_1\times\U_3)(R_F)\rightarrow\operatorname{Sp}_W(R_F)$ to $\operatorname{Mp}_W(R_F)$,
    \item $\psi\circ\tr_{E/F}$ and $(\chi\circ\Nm_{L/K},\chi\circ\Nm_{L/K})$ induces a lifting of $(\R^1_{L/E}\G_m\times\R^1_{L/E}\G_m)(R_E)\rightarrow\operatorname{Sp}_W(R_F)$ to $\operatorname{Mp}_W(R_F)$.
\end{itemize}
Since $\Nm_{L/K}(k)=k^3$ for all $k$ in $K^\times$, these lifts restrict to one another under (\ref{eq:seesaw}).

Endow $R_F^3$ with the self-dual measure with respect to $\psi$, which yields a Hermitian pairing on the Schwartz space $\mathcal{S}(R_F^3)$. Using the polarization $W=F^3\oplus \delta F^3$, equip $\mathcal{S}(R_F^3)$ with the Weil representation of $\operatorname{Mp}_W(R_F)$ associated with $\psi$, and write $\theta_{13}(-)$ for the resulting theta lift from $\U_1$ to $\U_3$.

In the setting of (\ref{eq:seesaw}), endow $R_E$ with the self-dual measure with respect to $\psi\circ\tr_{E/F}$, which yields a Hermitian pairing on $\mathcal{S}(R_E)$. Using the polarization $L_{\epsilon_1}\otimes_LL_\lambda=E\oplus\delta E$, equip $\mathcal{S}(R_E)$ with the Weil representation of $\operatorname{Mp}_W(R_F)$ associated with $\psi$, and write $\theta_E(-)$ for the resulting theta lift from $\R_{E/F}\R^1_{L/E}\G_m$ to $\R_{E/F}\R^1_{L/E}\G_m$ (when $F$ is a number field, assume that $\R^1_{L/E}\G_m$ is anisotropic over~$E$). When $F$ is a number field, any element of $\operatorname{Sp}_W(F)$ that sends the polarization $F^3\oplus\delta F^3$ to $E\oplus\delta E$ induces a unitary isomorphism $\mathcal{S}(\A_F^3)\cong\mathcal{S}(\A_E)$ of representations of $\operatorname{Mp}_W(\A_F)$ that preserves their natural automorphic realizations.

For the rest of this subsection, assume that $F$ is a local field. Write $\omega_{K/F}:F^\times\ra\{\pm1\}$ associated with $K/F$ by class field theory, and consider the sign
\begin{align*}
\epsilon & := \omega_{K/F}(-\epsilon_1)\cdot\epsilon(\textstyle\frac12,\chi^3,\psi(\tr_{K/F}(\delta-)))\in\{\pm1\}.
\end{align*}
When $\chi^2=1$, assume that $\epsilon=+1$, and when $F$ is archimedean, assume that $\epsilon=-1$. 
\begin{definition}\label{defn:localsigma}
Write $\sigma^{\epsilon}$ for the irreducible smooth representation $\theta_{13}(\one)$ of $G'(F)$.
\end{definition}
By \cite[Proposition 3.7]{ourselves} or \cite[Proposition 3.9]{ourselves}, Definition \ref{defn:localsigma} agrees with the $\sigma^\epsilon$ from \cite[\S3.4]{ourselves} or \cite[\S3.5]{ourselves}.

\subsection{Spherical vectors for $G'$}\label{ss:localPU3}
In this subsection, we define spherical vectors in our representations of~$G'$ for later use. Assume that $F$ is a nonarchimedean local field, $K/F$ and $\chi$ are unramified, and $\psi$ has conductor $0$.

When $K$ is a field, our unramified hypotheses imply that $\chi^2=1$, so $\epsilon=+1$ by assumption. Therefore $v_K(\delta\epsilon_1)$ must be even; write $n\coloneqq v_K(\delta\epsilon_1)/2$. Then $\varpi^{-n}\O_K^3$ is self-dual in $W$ and compatible with the polarization $W=F^3\oplus\delta F^3$. Write $\phi_0$ in $\mathcal{S}(F^3)$ for $\vol(\varpi^{-n}\O_F^3)^{-1/2}$ times the indicator function of $\varpi^{-n}\O_F^3$.

When $K=F\times F$, endow $(1,0)F^3=F^3$ with the self-dual measure with respect to $\psi$, which yields a Hermitian pairing on $\mathcal{S}((1,0)F^3)$. Using the polarization $W=(1,0)F^3\oplus(0,1)F^3$, equip $\mathcal{S}((1,0)F^3)$ with the Weil representation of $\operatorname{Mp}_W(F)$ associated with $\psi$. Fix a unitary isomorphism $\mathcal{S}((1,0)F^3)\cong\mathcal{S}(F^3)$ of representations of $\operatorname{Mp}_W(F)$, and write $\phi_0$ in $\mathcal{S}(F^3)$ for the image of the indicator function of $(1,0)\O_F^3$.

Finally, write $f_0$ for the image of $\phi_0$ under the theta lift map $\mathcal{S}(F^3)\ra\one\boxtimes\theta_{13}(\one)=\one\boxtimes\sigma^\epsilon\cong\sigma^\epsilon$. Observe that $f_0$ is indeed $G'(\O_F)$-fixed.

\subsection{Local torus periods}\label{ss:G'localvector}
In this subsection, assume that $F$ is a local field. Our local torus periods are controlled by the following invariants. Identify $E^\times/\Nm_{L/E}(L^\times)$ with its image in $\{\pm1\}^{\pi_0(\operatorname{Spec}{E})}$ under $\omega_{L/E}$, and consider the following element of $E^\times/\Nm_{L/E}(L^\times)$:
\begin{align*}
\lambda_0\coloneqq\Delta_{E/F}\cdot\epsilon(\textstyle\frac12,\chi\circ\Nm_{L/K},\psi(\tr_{L/F}(\delta-)))\cdot\Nm_{E/F}\!\Big[\epsilon(\textstyle\frac12,\chi\circ\Nm_{L/K},\psi(\tr_{L/F}(\delta-)))\Big],
\end{align*}
where $\Delta_{E/F}$ in $F^\times/(F^\times)^2$ denotes the discriminant of $E/F$, and the $\epsilon$-factors at $s=\frac12$ are interpreted as elements of $\{\pm1\}^{\pi_0(\operatorname{Spec}{E})}$.
\begin{definition}\label{defn:cubicepsilon}
Write $\epsilon(E,\chi,\psi)$ for the sign $\omega_{K/F}(-\Delta_{E/F}\Nm_{E/F}(\lambda_0))\cdot\epsilon(\textstyle\frac12,\chi^3,\psi(\tr_{K/F}(\delta-)))$ in $\{\pm1\}$.
\end{definition}
When $F$ is nonarchimedean and $K/F$ and $\chi$ are unramified, \cite[Proposition 6.8]{ourselves} shows that $\epsilon(E,\chi,\psi)=+1$.

When $F$ is archimedean, the proof of \cite[Proposition 6.15]{ourselves} shows that
\begin{align*}
\epsilon(E,\chi,\psi)=\begin{cases}
+1 & \mbox{when }E\cong\RR\times\C,\\
-1 & \mbox{when }E\cong\RR^3.
\end{cases}
\end{align*}

\begin{prop}\label{prop:epsilon_and_i}
   There exists at most one $G'(F)$-orbit of $i$ in $\{E\hookrightarrow J\}$ satisfying $\Hom_{i(T_E)(F)}(\sigma^\epsilon,\one)\neq0$. This $G'(F)$-orbit exists if and only if $\epsilon=\epsilon(E,\chi,\psi)$, and in this case it arises from $\lambda_0$ and an isomorphism $K^3\cong L_{\lambda_0}$ of Hermitian spaces for $K/F$, as in \S\ref{ss:unitarysetup}.
\end{prop}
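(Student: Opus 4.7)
The natural approach is to use the local seesaw identity arising from diagram \eqref{eq:seesaw}. By the parametrization of $\{E\hookrightarrow J\}$ recalled at the end of \S\ref{ss:unitarysetup}, every $G'(F)$-orbit of embeddings arises from some $\lambda\in E^\times$ together with an isomorphism $K^3\cong L_\lambda$ of Hermitian spaces for $K/F$, and the orbit depends only on $\lambda$ modulo $F^\times\Nm_{L/E}(L^\times)$. Fixing such an $i$ and using $\sigma^\epsilon=\theta_{13}(\one)$, the seesaw identity for \eqref{eq:seesaw} applied with the trivial character on $\U_1(F)$ and $i(T_E)(F)$-invariants on $\theta_{13}(\one)$ produces an isomorphism of Hom spaces
$$\Hom_{i(T_E)(F)}(\sigma^\epsilon,\one) \;\cong\; \Hom_{\U_1(F)}(\theta_E(\one),\one),$$
where $\theta_E(\one)$ is the theta lift on the $E$-side of \eqref{eq:seesaw} attached to the Hermitian data $L_{\epsilon_1}\otimes_LL_\lambda=L_{\epsilon_1\lambda}$ obtained from $\lambda$. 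The quotient $i(T_E)=i(\R_{E/F}\R^1_{L/E}\G_m)/\U_1$ in $G'=\U_3/\U_1$ is exactly what allows the $\U_1(F)$-equivariance on the right to encode $i(T_E)(F)$-invariance on the left.

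Next, I would compute $\theta_E(\one)$ and its $\U_1(F)$-period explicitly. This is a $\U(1)\times\U(1)$ theta lift over $E$, hence is governed by an explicit epsilon-factor dichotomy (applied componentwise over $\pi_0(\operatorname{Spec} E)$): the lift $\theta_E(\one)$ is either zero or a specific unitary character $\eta_\lambda$ of $(\R^1_{L/E}\G_m)(E)$ determined through $\lambda$ by $\chi\circ\Nm_{L/K}$ and $\psi\circ\tr_{E/F}$, with nonvanishing dictated by $\epsilon(\tfrac12,\chi\circ\Nm_{L/K},\psi(\tr_{L/F}(\delta-)))$ and by the discriminant of $L_\lambda$. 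Restricting $\eta_\lambda$ to $\U_1(F)=\R^1_{K/F}\G_m(F)$ via the inclusion $\U_1\hookrightarrow\R_{E/F}\R^1_{L/E}\G_m$ gives, by local class field theory, an explicit sign built from $\omega_{K/F}$ and $\Nm_{E/F}$ of the above local constants. Imposing the further condition $K^3\cong L_\lambda$ (a discriminant equality in $F^\times/\Nm_{K/F}(K^\times)$) pins down the class of $\lambda$ in $E^\times/F^\times\Nm_{L/E}(L^\times)$: it is exactly the class $\lambda_0$ of Definition \ref{defn:cubicepsilon}. Reading off the resulting sign shows that both conditions are simultaneously satisfiable iff $\epsilon=\omega_{K/F}(-\Delta_{E/F}\Nm_{E/F}(\lambda_0))\cdot\epsilon(\tfrac12,\chi^3,\psi(\tr_{K/F}(\delta-)))=\epsilon(E,\chi,\psi)$. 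This yields simultaneously the uniqueness of the orbit, its existence precisely when $\epsilon=\epsilon(E,\chi,\psi)$, and the explicit description via $\lambda_0$.

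The main obstacle is bookkeeping of local constants. One must track the two Weil representation normalizations across the seesaw (the polarizations $W=F^3\oplus\delta F^3$ and $L_{\epsilon_1}\otimes_LL_\lambda=E\oplus\delta E$, and the change of additive character from $\psi$ to $\psi\circ\tr_{E/F}$), perform the $\U(1)\times\U(1)$ epsilon-dichotomy componentwise over $\pi_0(\operatorname{Spec} E)$, and then verify that restriction of $\eta_\lambda$ to $\U_1(F)$ reproduces exactly the combination of epsilon factors that enters the definition of $\lambda_0$ and $\epsilon(E,\chi,\psi)$. Since the analogous seesaw calculations were already carried out in \cite[\S3]{ourselves} (which this section refines), most of the individual pieces should be available there, and the work lies in assembling them to extract the sharper statement about uniqueness of the orbit.
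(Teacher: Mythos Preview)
Your proposal is correct and matches the paper's approach: the paper's proof consists entirely of a citation to \cite[Proposition 3.8]{ourselves} and \cite[Proposition 3.10]{ourselves} (plus a remark that $\lambda_0$ here is normalized by an element of $F^\times$ so that $K^3\cong L_{\lambda_0}$ already holds), and the seesaw argument you outline is precisely the content of those cited propositions, as is confirmed by the proof of Lemma~\ref{lem:gammanonzero} later in the paper. Your own closing paragraph already anticipates this.
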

\begin{proof}
    This follows from \cite[Proposition 3.8]{ourselves} and \cite[Proposition 3.10]{ourselves}; note that $\lambda_0$ differs from the $\lambda$ therein by an element of $F^\times$, chosen so that $K^3$ and $L_{\lambda_0}$ are already isomorphic as Hermitian spaces for $K/F$ (instead of after scaling $\lambda_0$ by an element of $F^\times$). 
\end{proof}
Assume a $G'(F)$-orbit as in Proposition \ref{prop:epsilon_and_i} exists, and fix an $i_0$ in it. Fix a representative of $\lambda_0$ in $E^\times$, and fix an isomorphism $K^3\cong L_{\lambda_0}$ of Hermitian spaces for $K/F$ that gives rise to $i_0$ as in \S\ref{ss:unitarysetup}, so that we get an isomorphism $W\cong L_{\epsilon_1}\otimes_LL_{\lambda_0}$ of symplectic spaces over $F$ as in \S\ref{ss:seesaw}.

\subsection{Local vectors for $G'$}\label{ss:actuallocalvector} In this subsection, assume that $F$ is a local field. We will normalize our local torus periods using the following vectors in our representations of $G'$.

Recall from \S\ref{ss:seesaw} that we endowed $\mathcal{S}(E)$ and $\mathcal{S}(F^3)$ with Hermitian pairings for which the Weil representation of $\operatorname{Mp}_W(F)$ is unitary. Fix a unitary isomorphism $\mathcal{S}(E)\cong\mathcal{S}(F^3)$ of representations of $\operatorname{Mp}_W(F)$, and write $\phi_{i_0}$ in $\mathcal{S}(F^3)$ for the image of the element of $\mathcal{S}(E)$ defined as $\phi_{\one,v}$ in \cite[p.~48]{Yan97} with respect to the quadratic\footnote{We warn the reader that \cite{Yan97} denotes the base field by $F_v$ and the quadratic \'etale algebra by $E_v$.} $E$-algebra $L$. Write $f_{i_0}$ for the image of $\phi_{i_0}$ under the theta lift map $\mathcal{S}(F^3)\ra\one\boxtimes\theta_{13}(\one)=\one\boxtimes\sigma^{\epsilon}\cong\sigma^\epsilon$.

Since the stabilizer of $i_0$ in $G'(F)$ equals $i_0(T_E)(F)$, we identify the $G'(F)$-orbit of $i_0$ with $G'(F)/i_0(T_E)(F)$. Write $\overline{(-)}:G'(F)\ra G'(F)/i_0(T_E)(F)$ for the quotient map, and fix a section $s$ of $\overline{(-)}$.
\begin{definition}\label{defn:localYang}
For all $i=x\cdot i_0$ in the $G'(F)$-orbit of $i_0$, write $\phi_i$ for $s(\overline{x})\cdot\phi_{i_0}$, and write $f_i$ for $s(\overline{x})\cdot f_{i_0}$.
\end{definition}

Recall from Proposition~\ref{prop:epsilon_and_i} that $\Hom_{i(T_E)(F)}(\sigma^\epsilon,\one)$ is $1$-dimensional.

\begin{lemma}\label{lemma:equiv_of_f_Xv}
For all $g'$ in $G'(F)$, $i$ in the $G'(F)$-orbit of $i_0$, and $\beta$ in $\Hom_{i(T_E)(F)}(\sigma^\epsilon,\one)$, we have
\begin{align*}
\beta(g'^{-1}\cdot f_{g'\cdot i}) = \beta(f_i).
\end{align*}
\end{lemma}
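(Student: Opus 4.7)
The strategy is to conjugate everything back to a statement about $f_{i_0}$ and then use the abelianness of $i_0(T_E)$. Write $i = x\cdot i_0$ with $x\in G'(F)$; then by Definition \ref{defn:localYang},
\begin{align*}
f_i = s(\overline{x})\cdot f_{i_0}, \qquad f_{g'\cdot i} = s(\overline{g'x})\cdot f_{i_0}.
\end{align*}
Because $s$ is a section of $\overline{(-)}$ and the $G'(F)$-action on $G'(F)/i_0(T_E)(F)$ is by left multiplication, the elements $g'^{-1}s(\overline{g'x})$ and $s(\overline{x})$ both project to $\overline{x}\in G'(F)/i_0(T_E)(F)$. Hence $g'^{-1}s(\overline{g'x}) = s(\overline{x})\cdot t_0$ for some $t_0\in i_0(T_E)(F)$, and the claim reduces to
\begin{align*}
\beta\bigl(s(\overline{x})\cdot(t_0\cdot f_{i_0})\bigr) = \beta\bigl(s(\overline{x})\cdot f_{i_0}\bigr).
\end{align*}

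To establish this, I would introduce the auxiliary linear functional $\beta'(v) \coloneqq \beta(s(\overline{x})\cdot v)$ on $\sigma^\epsilon$ and show that $\beta'\in\Hom_{i_0(T_E)(F)}(\sigma^\epsilon,\one)$; the desired equality then follows by applying $\beta'$ to $t_0\cdot f_{i_0}$ and to $f_{i_0}$. For the invariance of $\beta'$, write $s(\overline{x}) = x\cdot t_1$ with $t_1\in i_0(T_E)(F)$ (which is possible since $\overline{s(\overline{x})} = \overline{x}$). Because $i_0(T_E)$ is a torus, hence abelian, for every $t'\in i_0(T_E)(F)$ we have
\begin{align*}
s(\overline{x})\cdot t'\cdot s(\overline{x})^{-1} = x t_1 t' t_1^{-1} x^{-1} = xt'x^{-1} \in x\,i_0(T_E)(F)\,x^{-1} = i(T_E)(F),
\end{align*}
the last equality using that $i = x\cdot i_0$ acts on $J$ by $i(e) = xi_0(e)x^{-1}$. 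The $i(T_E)(F)$-invariance of $\beta$ then gives $\beta'(t'\cdot v) = \beta(s(\overline{x})t's(\overline{x})^{-1}\cdot s(\overline{x})v) = \beta(s(\overline{x})v) = \beta'(v)$, as desired.

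The only non-formal ingredient is recognising that, when conjugating a $i_0(T_E)(F)$-element, the section value $s(\overline{x})$ can be replaced by its representative $x$ modulo $i_0(T_E)(F)$; this is precisely where the torus being abelian is used. I do not anticipate a genuine obstacle beyond this bookkeeping, since the lemma is essentially a compatibility between the section-dependent definition of $f_i$ and the equivariance of the Hom-space.
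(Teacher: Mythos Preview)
Your proof is correct and follows essentially the same strategy as the paper: pull everything back to $i_0$ and use that a suitable translate of $\beta$ lies in $\Hom_{i_0(T_E)(F)}(\sigma^\epsilon,\one)$. The paper's version is marginally cleaner: it works with $\beta\circ x$ rather than $\beta\circ s(\overline{x})$, observing directly that $x^{-1}g'^{-1}s(\overline{g'x})$ and $x^{-1}s(\overline{x})$ both lie in $i_0(T_E)(F)$, so both sides equal $\beta(x\cdot f_{i_0})$. In particular, the appeal to abelianness of $i_0(T_E)$ is unnecessary: even without it, $t_1 t' t_1^{-1}\in i_0(T_E)(F)$ since the latter is a group, which already gives $s(\overline{x})t's(\overline{x})^{-1}\in i(T_E)(F)$.
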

\begin{proof}
Let $x$ be an element of $G'(F)$ such that $i=x\cdot i_0$. Then $\beta\circ x$ is an element of $\Hom_{i_0(T_E)(F)}(\sigma^\epsilon,\one)$, and $x^{-1}g'^{-1}s(\overline{g'x})$ and $x^{-1}s(\overline{x})$ lie in $i_0(T_E)(F)$. Therefore
\begin{gather*}
\beta(g'^{-1}\cdot f_{g'\cdot i}) = \beta(x\cdot x^{-1}g'^{-1}s(\overline{g'x})\cdot f_{i_0}) = \beta(x\cdot f_{i_0}) = \beta(x\cdot x^{-1}s(\overline{x})\cdot f_{i_0}) = \beta(f_i).\qedhere
\end{gather*}
\end{proof}
\begin{remark}\label{rem:fieldcaseYang}
When $K$ is a field, $\theta_{13}(\one)$ is the summand of $\mathcal{S}_F$ where $\U_1(F)$ acts trivially. Moreover, if $\R^1_{L/E}\G_m$ is anisotropic over $E$, then $(\R^1_{L/E}\G_m)(E)$ acts trivially on $\phi_{i_0}$ by construction \cite[p.~48]{Yan97}. Because $(\R^1_{L/E}\G_m)(E)$ contains $\U_1(F)$, this shows that $\phi_{i_0}$ lies in $\theta_{13}(\one)$. In particular, $i_0(T_E)(F)$ acts trivially on $f_{i_0}=\phi_{i_0}$. Therefore in this situation the section $s$ is unnecessary for defining $f_i$, and we have $g'^{-1}\cdot f_{g'\cdot i}=f_i$ even before applying $\beta$. However, when $\R^1_{L/E}\G_m$ is not anisotropic over $E$, one can see from \cite[Corollary 2.10 (i)]{Yan97} that $(\R^1_{L/E}\G_m)(E)$ does not act trivially on $\phi_{i_0}$.
\end{remark}

\begin{lemma}\label{lem:gammanonzero}
Let $\beta$ be in $\Hom_{i_0(T_E)(F)}(\sigma^\epsilon,\one)$. If $\beta$ is nonzero, then $\beta(f_{i_0})$ is nonzero.
\end{lemma}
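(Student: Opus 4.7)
The plan is to transfer the non-vanishing of $\beta(f_{i_0})$ through the seesaw (\ref{eq:seesaw}) to a non-vanishing statement about Yang's test vector under the theta lift $\theta_E$ over $E$, which follows from his explicit calculations in \cite{Yan97}.

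First I would observe that the composition $\phi\mapsto\beta(\theta_{13}(\phi))$ defines a $\U_1(F)\times i_0(T_E)(F)$-invariant functional on $\mathcal{S}(F^3)$: it is $\U_1(F)$-invariant because $\theta_{13}$ is the theta lift of the trivial representation of $\U_1$ (so $\mathcal{S}(F^3)\twoheadrightarrow\sigma^\epsilon$ kills the $\U_1$-action), and $i_0(T_E)(F)$-invariant by hypothesis on $\beta$. Via the unitary isomorphism $\mathcal{S}(F^3)\cong\mathcal{S}(E)$ fixed in \S\ref{ss:actuallocalvector}, this becomes a functional on $\mathcal{S}(E)$ which, by Proposition \ref{prop:epsilon_and_i}, spans the one-dimensional space $\Hom_{\U_1(F)\times i_0(T_E)(F)}(\mathcal{S}(E),\one)$.

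The seesaw (\ref{eq:seesaw}) exhibits this same subgroup $\U_1(F)\times i_0(T_E)(F)$ sitting inside the second dual pair $R\times R$, where $R=\R_{E/F}\R^1_{L/E}\G_m$, as $\U_1\subseteq R_{\mathrm{first}}$ (via the cubing inclusion $K^1\hookrightarrow L^1$) and $i_0(T_E)=R_{\mathrm{second}}$. Factoring the invariance instead through the $R_{\mathrm{second}}(F)$-coinvariants, which is precisely the theta lift $\theta_E$, yields another description of the same one-dimensional Hom space as $\Hom_{\U_1(F)}(\theta_E(\one),\one)$. Therefore there exists $c\in\C^\times$ and a nonzero functional $\tilde\beta\in\Hom_{\U_1(F)}(\theta_E(\one),\one)$ with
\begin{align*}
\beta(\theta_{13}(\phi)) = c\cdot\tilde\beta(\theta_E(\phi))\quad\text{for all }\phi\in\mathcal{S}(E).
\end{align*}
Applying this to $\phi_{i_0}$ reduces the problem to showing $\tilde\beta(\theta_E(\phi_{i_0}))\neq0$.

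At this point I would invoke Yang's explicit construction: the vector $\phi_{i_0}$ is by definition the image of Yang's vector $\phi_{\one,v}$ from \cite[p.~48]{Yan97}, which is a very particular Schwartz function on $E$ engineered so that the local theta integral computing its pairing against the trivial character of the other copy of $\R^1_{L/E}\G_m$ produces the explicit nonzero value tabulated in the main theorems of \cite{Yan97} (e.g., the non-vanishing encoded in \cite[Corollary 2.10]{Yan97} and its surrounding formulas). Since $\Hom_{\U_1(F)}(\theta_E(\one),\one)$ is one-dimensional and $\tilde\beta$ is a nonzero element, the Yang non-vanishing forces $\tilde\beta(\theta_E(\phi_{i_0}))\neq 0$, hence $\beta(f_{i_0})\neq 0$.

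The main obstacle I anticipate is the careful bookkeeping required to identify the two descriptions of the Hom space with one another under the fixed unitary isomorphism $\mathcal{S}(F^3)\cong\mathcal{S}(E)$ and the fixed polarizations --- in particular, to verify that the $\tilde\beta$ produced by the seesaw is genuinely the functional Yang computes, rather than differing from it by an a priori possible twist. The cleanest way is likely to unwind the seesaw via the doubling formula, writing $\beta(f_{i_0})$ as the iterated matrix-coefficient integral
\begin{align*}
\int_{i_0(T_E)(F)}\int_{\U_1(F)}\langle\omega(t)\omega(g)\phi_{i_0},\phi_{i_0}\rangle\,\dd g\,\dd t,
\end{align*}
and then recognizing the inner-plus-outer integral as the local period of $\theta_E(\phi_{i_0})$ computed by Yang; in the anisotropic case highlighted by Remark \ref{rem:fieldcaseYang} the groups are compact and both steps are essentially formal, while in the remaining cases one must also verify convergence (or appropriate regularization), which is precisely the content of Yang's local computations.
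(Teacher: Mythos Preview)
Your overall strategy---pull the functional back through the seesaw and reduce to a statement about $\theta_E(\phi_{i_0})$---is exactly what the paper does. The identification
\[
\Hom_{i_0(T_E)(F)}(\sigma^\epsilon,\one)\cong\Hom_{\U_1(F)\times(\R^1_{L/E}\G_m)(E)}(\mathcal{S}(E),\one)\cong\Hom_{\U_1(F)}(\theta_E(\one),\one)
\]
is the same chain you describe.

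Where you diverge is in the final step, and you make it harder than necessary. You propose to invoke Yang's explicit local formulas (and worry about convergence, regularization, and whether your $\tilde\beta$ matches Yang's functional). The paper bypasses all of this with a single observation: by \cite[Prop.~3.4]{rogawski1992multiplicity}, $\theta_E(\one)\cong\one$ is the \emph{trivial} representation, hence one-dimensional. So once you know the image of $\phi_{i_0}$ in $\theta_E(\one)$ is nonzero---which holds simply ``by construction'' of Yang's vector---\emph{any} nonzero functional $\tilde\beta$ on this one-dimensional space is automatically nonzero on it. No explicit values, no doubling integral, no convergence issues.

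Your matrix-coefficient integral and the anisotropic/split case analysis in the last paragraph are therefore unnecessary detours; the argument is purely algebraic once $\theta_E(\one)\cong\one$ is in hand.
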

\begin{proof}
Using (\ref{eq:seesaw}), the proof of \cite[Proposition 3.8]{ourselves} or \cite[Proposition 3.10]{ourselves} shows that precomposition with the theta lift maps $\mathcal{S}(F^3)\ra\one\boxtimes\theta_{13}(\one)\cong\sigma^\epsilon$ and $\mathcal{S}(E)\ra\theta_E(\one)\boxtimes\one\cong\theta_E(\one)$ identify
\begin{align*}
\Hom_{i_0(T_E)(F)}(\sigma^\epsilon,\one)&\ra^\sim\Hom_{\U_1(F)\times(\R^1_{L/E}\G_m)(E)}(\mathcal{S}(F^3),\one)\\
&\cong\Hom_{\U_1(F)\times(\R^1_{L/E}\G_m)(E)}(\mathcal{S}(E),\one)\la^\sim\Hom_{\U_1(F)}(\theta_E(\one),\one).
\end{align*}
Now $\theta_E(\one)$ is isomorphic to $\one$ by \cite[Prop 3.4]{rogawski1992multiplicity}, and the image of $\phi_{i_0}$ in $\theta_E(\one)$ is nonzero by construction. Hence the desired result follows.
\end{proof}
Write $\beta_{i_0}$ for the unique element of $\Hom_{i_0(T_E)(F)}(\sigma^\epsilon,\one)$ satisfying $\beta_{i_0}(f_{i_0})=1$, which exists by Lemma \ref{lem:gammanonzero}. 

\begin{definition}\label{defn:beta}
For all $i=x\cdot i_0$ in the $G'(F)$-orbit of $i_0$, write $\beta_i$ for $\beta_{i_0}\circ x^{-1}$.
\end{definition}

 Definition \ref{defn:beta} 
 implies that $\beta_{g'\cdot i} = \beta_i\circ g'^{-1}$ for all $g'\in G'(F)$, and
 Lemma \ref{lemma:equiv_of_f_Xv} implies that $\beta_i(f_i)=1$.  Moreover, $\beta_i$ is compatible with the spherical vector $f_0$ as follows:

\begin{lemma}\label{lem:spherical_compare_Yang}
    Assume that $F$ is nonarchimedean, $K/F$ is unramified, $\chi$ is unramified, and $\psi$ has conductor~$0$. If $i$ in $\left\{E \hookrightarrow J\right\}$ arises from an isomorphism $K^3 \cong L_{\lambda_0}$ of Hermitian spaces for $K/F$ under which the image of $\O_K^3$ in $L_{\lambda_0}$ is $\O_L$-stable, then  $|\beta_i(f_0)| = 1 $.
\end{lemma}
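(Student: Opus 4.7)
My plan is to use the seesaw underlying Lemma \ref{lem:gammanonzero} to transfer the computation of $\beta_i(f_0)$ into a pairing on the Schrödinger model $\mathcal{S}(E)$, where, in the unramified setting, both the image of $\phi_0$ and Yang's function $\phi^Y$ become normalized indicator functions of the same self-dual $\O_E$-lattice up to a unit scalar.

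First, I would reduce to the case $i_0=i$. By Proposition \ref{prop:epsilon_and_i}, all embeddings with nonzero $i(T_E)(F)$-invariants lie in a single $G'(F)$-orbit, and $i_0$ was chosen arbitrarily in this orbit at the end of \S\ref{ss:G'localvector}; likewise the representative of $\lambda_0$ and the isomorphism $K^3\cong L_{\lambda_0}$ used to define $\phi_{i_0}$ were arbitrary. I may therefore take $i_0=i$ together with the given lattice-preserving isomorphism $K^3\cong L_{\lambda_0}$, so that $\beta_i=\beta_{i_0}$ and the task is to show $|\beta_{i_0}(f_0)|=1$.

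Next, I would unpack the proof of Lemma \ref{lem:gammanonzero}. That proof shows that precomposition with the theta lift map $\mathcal{S}(F^3)\twoheadrightarrow\sigma^\epsilon$ and with the fixed unitary isomorphism $\mathcal{S}(E)\cong\mathcal{S}(F^3)$ identifies $\beta_{i_0}$ with a multiple of the theta projection $\Theta_E\colon\mathcal{S}(E)\to\theta_E(\one)\cong\one$. The normalization $\beta_{i_0}(f_{i_0})=1$, together with the fact that $\phi_{i_0}$ corresponds to $\phi^Y$ and that $\phi^Y$ has nonzero image in $\theta_E(\one)$, forces $\Theta_E(\phi^Y)=1$. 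Hence $\beta_{i_0}(f_0)=\Theta_E(\phi_0^E)$, where $\phi_0^E\in\mathcal{S}(E)$ is the image of $\phi_0$ under the fixed iso. Since the Weil representation of $\operatorname{Mp}_W(F)$ is irreducible, any two unitary isomorphisms $\mathcal{S}(E)\cong\mathcal{S}(F^3)$ differ by an element of $\C^1$; hence $|\beta_{i_0}(f_0)|$ is unchanged if I replace the fixed iso by the one coming from the lattice-preserving isomorphism of symplectic spaces $W\cong L_{\epsilon_1}\otimes_LL_{\lambda_0}$ (which sends $\O_K^3$ to $\O_L$ and the polarization $F^3$ to $E$). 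Under this iso, $\phi_0$, being the normalized indicator function of the self-dual $\O_F$-lattice $\varpi^{-n}\O_F^3$, is sent to the normalized indicator function of the corresponding self-dual $\O_E$-lattice $\varpi^{-n_E}\O_E$ in $\mathcal{S}(E)$, up to a unit scalar; the split case $K=F\times F$ is handled by the same argument using the auxiliary polarization from \S\ref{ss:localPU3}.

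Finally, I would invoke \cite[p.~48]{Yan97}: under our unramified hypotheses (which imply $L/E$ is unramified at every place of $E$), Yang's vector $\phi^Y$ is, up to an element of $\C^1$, equal to this same normalized indicator function of $\O_E$ in $\mathcal{S}(E)$. Consequently $\phi_0^E$ is a $\C^1$-multiple of $\phi^Y$, and
\begin{align*}
    |\beta_{i_0}(f_0)|=|\Theta_E(\phi_0^E)|=|\Theta_E(\phi^Y)|=1.
\end{align*}

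The main obstacle is the last step: comparing the explicit formula for Yang's $\phi^Y$ with the normalized indicator function in $\mathcal{S}(E)$, in both the inert and split quadratic $L/E$ cases, while tracking the volume factors coming from the self-dual measures on $F^3$ and $E$ with respect to $\psi$ and $\psi\circ\tr_{E/F}$. This is a routine but bookkeeping-heavy computation, ultimately relying on the fact that in the unramified setting the discriminants $\Delta_{L/E}$ and $\Delta_{E/F}$ are trivial, so the self-dual measures give $\O_E$ and $\O_L$ volume one.
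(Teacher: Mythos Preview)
Your plan has the right skeleton—reduce to $i=i_0$, then compare $\phi_0$ and $\phi_{i_0}$ as unitary vectors in the Weil representation—but there is a gap in the comparison step, and the paper's proof closes it with a trick you do not use.

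The gap is your parenthetical claim that the lattice-preserving Hermitian isomorphism $K^3\cong L_{\lambda_0}$ ``sends $\O_K^3$ to $\O_L$ and the polarization $F^3$ to $E$.'' The first clause is not asserted by the hypothesis (only $\O_L$-stability of the image is), and the second is simply unjustified: a $K$-linear isometry need not carry the $F$-Lagrangian $F^3\subset K^3$ to the $E$-Lagrangian $E\subset L$. Without this, you cannot conclude that $\phi_0^E$ is the normalized indicator of a specific lattice $\varpi^{-n_E}\O_E$, nor that this lattice coincides with the one underlying Yang's $\phi^Y$. So your final paragraph is not merely bookkeeping: you have two unitary spherical vectors for two possibly different self-dual $\O_L$-lattices, and you have not explained why $\Theta_E$ takes the same absolute value on both.

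The paper's proof sidesteps all of this. It observes that $\phi_0$ is unitary spherical for a self-dual lattice of the form $t\O_K^3$ (hence $\O_L$-stable under the hypothesis), and that $\phi_{i_0}$ is by construction unitary spherical for some self-dual $\O_L$-lattice. Since any two self-dual $\O_L$-lattices in $L_{\lambda_0}$ are $L^1$-translates, and since $\beta_{i_0}$ is $i_0(L^1)$-invariant, one may translate $\phi_{i_0}$ so that both vectors are spherical for the \emph{same} lattice. Uniqueness of the spherical vector up to $\C^1$ then gives $|\beta_{i_0}(f_0)|=|\beta_{i_0}(f_{i_0})|=1$ immediately, with no need to pass to the $\mathcal{S}(E)$-model or to inspect Yang's formulas. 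If you insert this $L^1$-translation step in place of your unjustified polarization claim, your argument becomes correct and essentially collapses to the paper's.
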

\begin{proof}
If $i$ satisfies the above condition, then $g'\cdot i$ does too if and only if $g'$ lies in $G'(\O_F)i(T_E)(F)$. Therefore we can assume that $i=i_0$.

By inspecting the construction in \S\ref{ss:localPU3}, we see that $\phi_0$ in $\mathcal S(F^3)$ is a unitary spherical vector with respect to a self-dual $\O_F$-lattice in $W$ of the form $t \O_K^3$ for some $t$ in $K^\times$. 

On the other hand, by inspecting Yang's construction \cite{Yan97}, we see that $\phi_{i_0}$ in $\mathcal S(E)\cong\mathcal{S}(F^3)$ is a unitary spherical vector with respect to a self-dual (with respect to the symplectic form over $F$) $\O_L$-lattice in $L_\lambda$. Because any two such lattices are translates under $L^1$ and $\beta_{i_0}$ is $i_0(L^1)$-invariant, we can assume that $\phi_{i_0}$ is spherical with respect to the image of $t\O_K^3$. Then the desired result immediately follows.
\end{proof}

\subsubsection{Archimedean case}\label{sss:archYang}
In this subsubsection, assume that $F$ is archimedean. Because $G'(\RR)$ is compact, the existence of $i_0$ implies that $E\cong\RR^3$. Therefore $\R^1_{L/E}\G_m$ is anisotropic over $E$, so Remark \ref{rem:fieldcaseYang} applies. Write $\langle-,-\rangle_\sigma$ for the Hermitian pairing on $\sigma^-=\theta_{13}(\one)$ given by restricting the Hermitian pairing on $\mathcal{S}(F^3)$. Since $\phi_{i_0}$ is a unitary element of $\mathcal{S}(E)\cong\mathcal{S}(F^3)$ by construction \cite[p.~48]{Yan97}, we see that $f_{i_0}=\phi_{i_0}$ is unitary with respect to $\langle-,-\rangle_\sigma$. Because $G'(\RR)$ preserves $\langle-,-\rangle_\sigma$, this implies that $\beta_{i_0}$ equals $\langle-,f_{i_0}\rangle_\sigma$. More generally, this implies that $\beta_i$ equals $\langle-,f_i\rangle_\sigma$ for all $i$ in the $G'(\RR)$-orbit of $i_0$.

\subsection{Global torus periods}\label{ss:Yang}
In this subsection, assume that $F$ is a number field. We consider the following cuspidal automorphic representations of $G'$. Let $(\epsilon_v)_v$ be a sequence in $\{\pm1\}$ indexed by places $v$ of $F$ with
\begin{itemize}
\item $\epsilon_v=+1$ when $v$ splits in $K$ or $\chi_v^2=1$,
\item $\epsilon_v=-1$ when $v$ is archimedean,
\item $\prod_v\epsilon_v=\epsilon(\frac12,\chi^3)$.
\end{itemize}
Choose $\epsilon_1$ in $F^\times$ such that its image in $F^\times/\Nm_{K/F}(K^\times)$ is the unique element satisfying  
\begin{align*}
\epsilon_{1,v}=\omega_{K_v/F_v}(-1)\cdot\epsilon(\textstyle\frac12,\chi_v^3,\psi_v(\tr_{K_v/F_v}(\delta-)))\cdot\epsilon_v\in\{\pm1\}
\end{align*}
for every place $v$ of $F$, which exists by the Hasse principle and the fact that
\begin{align*}
\textstyle\prod_v\epsilon_{1,v}=(\prod_v\omega_{K_v/F_v}(-1))\cdot\epsilon(\frac12,\chi^3)\cdot(\prod_v\epsilon_v) = +1\cdot\epsilon(\frac12,\chi^3)^2 = +1.
\end{align*}
Then \cite[Proposition 3.3]{ourselves} and \S\ref{ss:seesaw} identify $\sigma\coloneqq\theta_{13}(\one)$ with $\bigotimes'_v\sigma_v^{\epsilon_v}$, where $\sigma_v^{\epsilon_v}$ is the irreducible smooth representation of $G'(F_v)$ from Definition \ref{defn:localsigma}. Write $\epsilon_v(E_v,\chi_v,\psi_v)$ for the sign from Definition~\ref{defn:cubicepsilon}.

We consider the following global torus periods. For any $i$ in $\{E\hookrightarrow J\}$, write $\mathcal{P}_i:\sigma\ra\C$ for the $\C$-linear map $f\mapsto\int_{[i(T_E)]}f(t')\dd{t'}$. Then \cite[Proposition 3.12]{ourselves} shows that $\mathcal{P}_i$ is nonzero only if, for every place $v$ of $F$, we have $\epsilon_v=\epsilon_v(E_v,\chi_v,\psi_v)$, and $i$ localizes to the unique $G'(F_v)$-orbit of $i_v$ in $\{E_v\hookrightarrow J_v\}$ satisfying
\begin{align*}
\Hom_{i_v(T_{E_v})(F_v)}(\sigma_v^{\epsilon_v},\one)\neq0.
\end{align*}
Assume these conditions hold for $\epsilon_v$ and $i$. Then $\R^1_{L_v/E_v}\G_m$ is anisotropic over $E_v$ for any archimedean place $v$ of $F$, so $\R^1_{L/E}\G_m$ is anisotropic over $E$. Moreover, since $J$ does not arise from a division algebra, there exists a unique $G'(F)$-orbit of such $i$ in $\{E\hookrightarrow J\}$ \cite[Lemma 15.5.(2)]{gan2021twisted}. Let $i$ be an element of this $G'(F)$-orbit. Write $f_{i_v}$ for the element of $\sigma^{\epsilon_v}_v$ from Definition \ref{defn:localYang}, and write $f_i$ for the element $\otimes'_vf_{i_v}$ of $\sigma$.

Let $S$ be a finite set of places of $F$ such that,
\begin{itemize}
    \item for all $v$ not in $S$, $v$ is nonarchimedean, $K_v/F_v$ is unramified, and $\chi_v$ is unramified,   
    \item $\O_{K,S}$ is a free $\O_{F,S}$-module.
\end{itemize}
Then $\R^1_{\O_{K,S}/\O_{F,S}}\G_m$ is a smooth model of $\R^1_{K/F}\G_m$ over $\O_{F,S}$, and its Lie algebra is the rank-$1$ free $\O_{F,S}$-module $(\O_{K,S})^{\tr=0}$. Hence there exists a nowhere vanishing, translation-invariant $1$-form $\mu$ on $\R^1_{\O_{K,S}/\O_{F,S}}\G_m$. 

Note that $(\R^1_{\O_{K,S}/\O_{F,S}}\G_m)_{\O_{E,S}}=\R^1_{\O_{L,S}/\O_{E,S}}\G_m$ is a smooth model of $\R^1_{L/E}\G_m$ over $\O_{E,S}$, and the pullback of $\mu$ to $\R^1_{\O_{L,S}/\O_{E,S}}\G_m$ remains nowhere vanishing and translation-invariant. For every place $w$ of~$E$, write $\vol_\mu$ for the associated measure on $L^1_w$, and write $M_w$ for the volume with respect to $\vol_\mu$ of the maximal compact subgroup of $L^1_w$. When $w$ does not lie above $S$, we have $M_w=L_w(1,\omega_{L_w/E_w})^{-1}$.
\begin{prop}\label{prop:Yang}
We have
    \begin{align*}
        \abs{\mathcal P_i (f_i)}^2 = L(\textstyle\frac12,\Ind^F_K\chi\otimes\Ind^F_E\one)\cdot\Delta_{\O_E/\Z}^{1/2}\cdot \displaystyle\prod_{v\in S} C_{v},
    \end{align*}
    where $\Delta_{\O_E/\Z}$ in $\Z$ denotes the discriminant of $\O_E/\Z$, and $C_{v}$ is a nonzero constant depending only on $\chi_v$, $E_v$, and $\mu$.
\end{prop}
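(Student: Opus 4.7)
The plan is to apply the seesaw \eqref{eq:seesaw} to reduce the computation of $|\mathcal{P}_i(f_i)|^2$ to a toric period that T.\ Yang computed explicitly in \cite{Yan97}, and then identify the resulting Hecke $L$-value via the projection formula. The argument splits into three phases: a seesaw unfolding, an application of Yang's central-value formula, and the tracking of normalization factors.

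For the first phase, I will unfold the definition of $f_i$ as a theta lift. Since $f_i(g') = \int_{[\U_1]}\Theta(\phi_i)(h,g')\,dh$ for the theta kernel $\Theta(\phi_i)$ of the dual pair $(\U_1,\U_3)$, Fubini yields
\begin{align*}
\mathcal{P}_i(f_i) = \int_{[\U_1]}\int_{[i(T_E)]}\Theta(\phi_i)(h,g')\,dg'\,dh.
\end{align*}
The inclusion $\U_1\hookrightarrow\R_{E/F}\R^1_{L/E}\G_m$ coming from $\R^1_{K/F}\G_m\hookrightarrow\R_{E/F}\R^1_{L/E}\G_m$, together with the surjection $\R_{E/F}\R^1_{L/E}\G_m\twoheadrightarrow T_E$ from \S\ref{ss:unitarysetup}, combine to collapse $[\U_1]\times[i(T_E)]$ to $[\R_{E/F}\R^1_{L/E}\G_m]$. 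The compatibility of the two Weil representation liftings noted after \eqref{eq:seesaw} then identifies the integrand, via the unitary isomorphism $\mathcal{S}(\A_F^3)\cong\mathcal{S}(\A_E)$ induced by the polarization change, with the theta kernel for the dual pair $(\R^1_{L/E}\G_m,\R^1_{L/E}\G_m)$ over $E$ applied to the image $\phi'_i\in\mathcal{S}(\A_E)$ of $\phi_i$.

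For the second phase, I will apply Yang's central-value formula. By construction in \S\ref{ss:actuallocalvector}, the local components of $\phi'_i$ are precisely Yang's test functions $\phi_{\one,v}$ from \cite[p.~48]{Yan97}; furthermore, $\theta_E(\one)\cong\one$ by \cite[Proposition~3.4]{rogawski1992multiplicity}. Yang's explicit formula then evaluates the seesaw integral above and yields
\begin{align*}
|\mathcal{P}_i(f_i)|^2 = L(\tfrac{1}{2},\chi|_L)\cdot\Delta_{\O_E/\Z}^{1/2}\cdot\prod_{v\in S}C_v
\end{align*}
for constants $C_v$ depending only on $\chi_v$, $E_v$, and $\mu$ at each $v\in S$. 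Finally, the projection formula $\Ind_K^F\chi\otimes\Ind_E^F\one=\Ind_L^F(\chi|_L)$ recalled in the introduction yields $L(\tfrac{1}{2},\chi|_L)=L(\tfrac{1}{2},\Ind_K^F\chi\otimes\Ind_E^F\one)$, completing the identification of $L$-values.

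The main obstacle lies in the third phase: carefully tracking all measure-theoretic normalizations so that the discriminant factor is exactly $\Delta_{\O_E/\Z}^{1/2}$. This entails comparing the self-dual measure on $\A_E$ with respect to $\psi\circ\tr_{E/F}$ to the self-dual measures on $\A_F^3$ with respect to $\psi$ (which introduces powers of $|\Delta_{\O_E/\Z}|$), verifying that at each unramified place $w\nmid S$ Yang's local formula produces exactly $L_w(1,\omega_{L_w/E_w})^{-1}=M_w$, and confirming that the discrepancies involving the section $s$ from Definition~\ref{defn:localYang} and Yang's own measure conventions collapse to local constants $C_v$ at the finitely many places $v\in S$ depending only on the stated local data.
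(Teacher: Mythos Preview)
Your proposal is correct and follows essentially the same route as the paper: apply the seesaw \eqref{eq:seesaw} to express $\mathcal P_i(f_i)$ as a theta value on the $\R^1_{L/E}\G_m$ side, then invoke Yang's explicit central-value formula \cite[Theorem~2.6]{Yan97}. The one point where the paper is more precise than your phase~3 is the source of $\Delta_{\O_E/\Z}^{1/2}$: it does not arise from the comparison of self-dual measures on $\A_E$ versus $\A_F^3$ that you suggest, but rather from Ono's Tamagawa volume formula \cite{Ono63} for $\vol([\R^1_{L/E}\G_m])$, which combines with the volume factor in Yang's formula to produce exactly $\Delta_{\O_E/\Z}^{1/2}\cdot\prod_w B_wD_w$ for explicit local constants $B_w$, $D_w$ that are trivial outside $S$.
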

By Proposition \ref{prop:Yang}, the product $\prod_{v\in S} C_v$ is independent of $\mu$.

\begin{proof}
Choose $\lambda$ in $E^\times$ and an isomorphism $K^3\cong L_\lambda$ of Hermitian spaces for $K/F$ that give rise to $i$ as in \S\ref{ss:unitarysetup}. Using (\ref{eq:seesaw}), for all $\phi$ in $\mathcal{S}(\A_F^3)\cong\mathcal{S}(\A_E)$ we see that
\begin{align}\label{eq:globalseesaw}
\begin{split}
\mathcal{P}_i(\theta_{13}(\phi,1)) = \int_{[i(T_E)]}\theta_{13}(\phi,1)(t')\dd{t'} &= \frac1{\vol([\U_1])}\int_{[\R^1_{L/E}\G_m]}\theta_{13}(\phi,1)(t')\dd{t'} \\
&= \frac1{\vol([\U_1])}\int_{[\U_1]}\theta_E(\phi,1)(u)\dd{u} = \theta_E(\phi,1)(1)
\end{split}
\end{align}
since $\theta_E(\one)$ is isomorphic to $\one$ by \cite[Prop 3.4]{rogawski1992multiplicity} and \cite[Proposition 1.2]{GRS93}.

We will apply (\ref{eq:globalseesaw}) as follows. For every place $v$ of $F$, write $\phi_{i_v}$ for the element of $\mathcal{S}(F_v^3)$ from \S\ref{ss:actuallocalvector}, and write $\phi_i$ for the element $\otimes'_v\phi_{i_v}$ of $\mathcal{S}(\A_F^3)$. By construction, $\phi_i$ is an $(\R^1_{L/E}\G_m)(\A_E)$-translate of the element of $\mathcal{S}(\A_F^3)\cong\mathcal{S}(\A_E)$ defined as $\phi_\one$ in \cite[p.~48]{Yan97}. Because the Hermitian pairing on $\mathcal{S}(\A_E)$ is $(\R^1_{L/E}\G_m)(\A_E)$-invariant, this implies that $\phi_i$ still satisfies \cite[(2.18)]{Yan97}, so \cite[Theorem 2.6]{Yan97} shows that
\begin{align*}
|\theta_E(\phi_i,1)(1)|^2 = L(\textstyle\frac12,\Ind^F_K\chi\otimes\Ind^F_E\one)\cdot\displaystyle\frac{\vol([\R^1_{L/E}\G_m])}{2^{\pi_0(\operatorname{Spec}{E})}L(1,\omega_{L/E})}\cdot\prod_wB_w,
\end{align*}
where $w$ runs over places of $E$, and
\begin{align*}
B_w\coloneqq\begin{cases}
(1+q_w^{-1})^{-1} & \mbox{when }w\mbox{ is inert in }L\mbox{ and }\chi_v\circ\Nm_{L_w/K_v}\mbox{ is ramified}, \\
q_w^{-c(\chi_v\circ\Nm_{L_w/K_v})}(1-q_w^{-1})^2 & \mbox{when }w\mbox{ is split in }L\mbox{ and }\chi_v\circ\Nm_{L_w/K_v}\mbox{ is ramified}, \\
1 & \mbox{otherwise.}
\end{cases}
\end{align*}
By \cite[Main Theorem]{Ono63}, we have
\begin{align*}
\vol([\R^1_{L/E}\G_m]) = 2^{\pi_0(\operatorname{Spec}{E})}L(1,\omega_{L/E})\cdot\Delta_{\O_E/\Z}^{1/2}\cdot\prod_wD_w,
\end{align*}
where
\begin{align*}
D_w\coloneqq\begin{cases}
M_w^{-1}L_w(1,\omega_{L_w/E_w})^{-1} & \mbox{when }w\mbox{ is nonarchimedean},\\
M_w^{-1} & \mbox{when }w\mbox{ is archimedean}.
\end{cases}
\end{align*}
Therefore we get
\begin{align*}
|\theta_E(\phi_i,1)(1)|^2 = L(\textstyle\frac12,\Ind^F_K\chi\otimes\Ind^F_E\one)\cdot\Delta_{\O_E/\Z}^{1/2}\cdot\displaystyle\prod_wB_wD_w.
\end{align*}
We conclude by noting that $f_i=\theta_{13}(\phi_i,1)$ and that $C_v\coloneqq\prod_{w\mid v}B_wD_w$ depends only on $\chi_v$, $E_v$, and $\mu$.
\end{proof}

\section{Formulas for Fourier coefficients}\label{s:formulas}
Our goal in this section is to prove (modulo calculating certain local integrals) our main results on the Fourier coefficients of quaternionic modular forms on $\mathsf{G}_2$ over totally real fields. We will calculate these local integrals later in \S\ref{s:unramified}, \S\ref{s:ramified}, and \S\ref{s:archimedean}.

We start in \S\ref{ss:qds} by recalling quaternionic discrete series representations on $\mathsf{G}_2$, which we use to define quaternionic modular forms and their Fourier coefficients in \S\ref{ss:qmf}. Our quaternionic modular forms of interest arise from an exceptional theta lift between $\PU_3$ and $\mathsf{G}_2$, using theta kernels on a quasi-split adjoint form of~$\mathsf{E}_6$. We gather basic facts about the Fourier coefficients of these theta functions in \S\ref{ss:E6}, \S\ref{ss:localfourierE6}, and \S\ref{ss:globalFourierE6}.

In \S\ref{ss:G2local}, we relate Fourier coefficients on $\mathsf{E}_6$ to Fourier coefficients on $\mathsf{G}_2$, leading to the definition of the local integrals in \S\ref{ss:locint}. Finally, in \S\ref{ss:main} we put everything together and prove our main results.

\subsection{Quaternionic discrete series on $\mathsf{G}_2$}\label{ss:qds}
We start with some notation on $\mathsf{G}_2$. Write $\g$ for the Lie algebra over $F$ defined as $\widetilde\g_F$ in \cite[\S2.2]{ourselves}, and write $G$ for its automorphism group over $F$. Recall that $G$ is the connected split simple group of type $\mathsf{G}_2$ over $F$ \cite[\S2.3]{ourselves}. Following \cite[\S2.5]{ourselves}, write $P$ for the Heisenberg parabolic subgroup of $G$, write $N$ for the unipotent radical of $P$, and write $M$ for the Levi subgroup of $P$. We can naturally identify $M$ with $\GL_2$.

Write $Z$ for the center of $N$, and recall that $N/Z$ is abelian \cite[\S2.5]{ourselves}. Write $\X$ for $\Hom_F(N/Z,F)$. We can naturally identify $\X$ and $N/Z$ with $F^4$ \cite[\S2.5]{ourselves}, and under this identification, the evaluation pairing $\X\times(N/Z)\ra F$ corresponds to $$\langle(a,b,c,d),(a',b',c',d')\rangle \coloneqq ad'-bc'+cb'-da'.$$
Write $q$ for the quartic form $(a,b,c,d)\mapsto b^2c^2+18abcd-4ac^3-4db^3-27a^2d^2$ on $\X$.

For the rest of this subsection, assume that $F$ is an archimedean local field. We now recall the \emph{quaternionic discrete series} on $G(\RR)$. Recall that the maximal compact subgroup $K$ of $G(\RR)$ is $\SU(2)_\ell\times^{\{\pm1\}}\SU(2)_s$, where the subscripts mean that their complexifications induce $\ell$ong and $s$hort root subgroups, respectively, of $G(\C)$ \cite[Proposition 4.1]{GW96}. For any positive integer $n$, write $\pi_n$ for the irreducible smooth representation of $G(\RR)$ defined as $\pi'_{2n+2}$ in \cite[Proposition 5.7]{GW96}. In particular, the minimal $K$-type of $\pi_n$ is $\mathbb{V}_n\coloneqq\Sym^{2n}\boxtimes\one$.

For all $\mathcal{E}$ in $\X(\RR)$, write $\psi_\mathcal{E}:N(\RR)\ra \C^1$ for the unitary character $\psi(\langle\mathcal{E},-\rangle)$. Write $r$ for the unique nonzero real number such that $\psi(x)=e^{-irx}$. Write $r_0(i)$ for the element $(1,-i,-1,i)$ of $(N/Z)(\C)$, and let $\mathcal{E}$ be an element of $\X(\RR)$ such that $\langle r\mathcal{E},m\cdot r_0(i)\rangle$ is nonzero for all $m$ in $M(\RR)$. For all integers $v$, write $K_v$ for the associated $K$-Bessel function, and write $\mathcal{W}^\mathcal{E}_{n,v}:M(\RR)\ra\C$ for the function given by
\begin{align*}
m\mapsto\left[\frac{|\langle r\mathcal{E},m\cdot r_0(i)\rangle|}{\langle r\mathcal{E},m\cdot r_0(i)\rangle}\right]^v\cdot\det(m)^n\cdot|\det(m)|\cdot K_v(|\langle r\mathcal{E},m\cdot r_0(i)\rangle|),
\end{align*}
which agrees with the expression in \cite[Theorem 1.2.1 (1) (b)]{MR4094735} by \cite[Proposition 2.3.1]{MR4094735}.

Write $\{x_\ell,y_\ell\}$ for the standard basis of the standard representation of $\SU(2)_\ell$, so that $\{x^{n+v}_\ell y_\ell^{n-v}\}_{\abs{v}\leq n}$ is a basis of $\mathbb{V}_n$. Then $\Hom_{N(\RR)}(\pi_n,\psi_\mathcal{E})=\Hom_{G(\RR)}(\pi_n,\Ind_{N(\RR)}^{G(\RR)}\psi_\mathcal{E})$ is spanned by the unique map $\mathcal{W}^\mathcal{E}_n$ such that, for all $\abs{v}\leq n$, the function $\mathcal{W}^\mathcal{E}_n(x^{n+v}_\ell y_\ell^{n-v}):G(\RR)\ra\C$ restricted to $M(\RR)$ equals
\begin{align*}
m\mapsto\frac1{(n+v)!(n-v)!}\mathcal{W}^\mathcal{E}_{n,-v}(m)
\end{align*}
\cite[Theorem 1.2.1 (1) (b)]{MR4094735}. On the other hand, if $\mathcal{E}$ is an element of $\X(\RR)$ such that $\langle r\mathcal{E},m\cdot r_0(i)\rangle$ vanishes for some $m$ in $M(\RR)$, then $\Hom_{N(\RR)}(\pi_n,\psi_\mathcal{E})$ vanishes \cite[Theorem 1.2.1 (1) (a)]{MR4094735}.

\subsection{Quaternionic modular forms on $G$}\label{ss:qmf}
In this subsection, assume that $F$ is a number field. Thanks to the results of Pollack \cite{MR4094735} recalled in \S\ref{ss:qds}, the following automorphic forms on $G$ have a good theory of Fourier coefficients.
\begin{definition}
We say that a cuspidal automorphic representation $\pi$ of $G(\A_F)$ is \emph{quaternionic} if, for all archimedean places $v$ of $F$, there exists a positive integer $n_v$ such that $\pi_v$ is isomorphic to the representation~$\pi_{n_v}$ from \S\ref{ss:qds}. We say that $(n_v)_{v\mid\infty}$ is the \emph{weight} of $\pi$.
\end{definition}
Let $\pi$ be a cuspidal automorphic representation of $G(\A_F)$ that is quaternionic of weight $(n_v)_{v\mid\infty}$. For any $\mathcal{E}$ in $\X(F)$, write $\psi_\mathcal{E}:[N/Z]\ra \C^1$ for the unitary character $\psi(\langle \mathcal{E},-\rangle)$. For any $\mathcal{F}^\infty$ in $\bigotimes'_{v\nmid\infty}\pi_v$, the function
\begin{align*}
\mathcal{F}_\infty\mapsto(\mathcal{F}^\infty\otimes\mathcal{F}_\infty)_{N,\psi_\mathcal{E}}(1)\coloneqq\int_{[N]}(\mathcal{F}^\infty\otimes\mathcal{F}_\infty)(n)\psi_{\mathcal{E}}(n)^{-1}\dd{n}
\end{align*}
yields an element of $\Hom_{N(F\otimes_{\mathbb{Q}}\RR)}(\bigotimes_{v\mid\infty}\pi_v,\bigotimes_{v\mid\infty}\psi_{v,\mathcal{E}})=\bigotimes_{v\mid\infty}\Hom_{N(F_v)}(\pi_v,\psi_{v,\mathcal{E}})$. If this space is nonzero, then \S\ref{ss:qds} shows that it is $1$-dimensional. Hence, after fixing an isomorphism $\pi_v\cong\pi_{n_v}$ for all archimedean places $v$ of $F$, there is a unique $a_\mathcal{E}(\mathcal{F}^\infty)$ in $\C$ such that
\begin{align*}
(\mathcal{F}^\infty\otimes\mathcal{F}_\infty)_{N,\psi_\mathcal{E}}(g_v) = a_\mathcal{E}(\mathcal{F}^\infty)\prod_{v\mid\infty}\mathcal{W}^{\mathcal{E}_v}_{n_v}(\mathcal{F}_v)(g_v)
\end{align*}
for all $\mathcal{F}_\infty = \otimes_{v\mid\infty}\mathcal{F}_v$ in $\bigotimes_{v\mid\infty}\pi_v$ and $g_\infty=(g_v)_{v\mid\infty}$ in $G(F\otimes_{\mathbb{Q}}\RR)$, where $\mathcal{E}_v$ denotes the image of $\mathcal{E}$ in $\X(F_v)$.
\begin{definition}\label{defn:Fourier}
We say that $a_{\mathcal{E}}(\mathcal{F}^\infty)$ is the \emph{$\mathcal{E}$-th Fourier coefficient} of $\mathcal{F}^\infty$.
\end{definition}

When $F=\mathbb{Q}$, Definition \ref{defn:Fourier} agrees with the Fourier coefficient defined in \cite[Corollary 1.2.3]{MR4094735}.

\subsection{A quasi-split adjoint form of $\mathsf{E}_6$}\label{ss:E6} We study our quaternionic modular forms on $G$ using an \emph{exceptional theta lift} between $G'$ and $G$ in a quasi-split adjoint form of $\mathsf{E}_6$; let us recall this in the next few subsections. Write $\widetilde\g$ for the Lie algebra over $F$ defined as $\widetilde\g_J$ in \cite[\S2.2]{ourselves}, and write $\widetilde{G}$ for its connected automorphism group over $F$. Recall that $\widetilde{G}$ is the quasi-split adjoint form of $\mathsf{E}_6$ with respect to $K$ over $F$ \cite[\S2.2]{ourselves}. Following \cite[\S2.5]{ourselves}, write $\widetilde{P}$ for the Heisenberg parabolic subgroup of $\widetilde{G}$, write $\widetilde{N}$ for the unipotent radical of $\widetilde{P}$, and write $\widetilde{M}$ for the Levi subgroup of $\widetilde{P}$.

Recall that there is a natural injective morphism $G\times G'\hookrightarrow\widetilde{G}$ of groups over $F$ \cite[\S2.3]{ourselves}. Moreover, we have
\begin{align*}
P=\widetilde{P}\cap G,\quad M=\widetilde{M}\cap G, \quad N=\widetilde{N}\cap G, \quad\mbox{and}\quad G'\subseteq\widetilde{M}
\end{align*}
\cite[\S2.5]{ourselves}. The center of $\widetilde{N}$ equals $Z$, and the quotient $\widetilde{N}/Z$ is abelian; write $\widetilde\X$ for $\Hom_F(\widetilde{N}/Z,F)$. We can naturally identify $\widetilde{N}/Z$ and $\widetilde{\X}$ with $F\times J\times J\times F$ \cite[\S2.5]{ourselves}, and under this identification, the evaluation pairing $\widetilde{\X}\times(\widetilde{N}/Z)\ra F$ corresponds to
\begin{align}\label{eq:our_symplectic_form}
\langle(a,x,y,d),(a',x',y',d')\rangle \coloneqq ad'-\tr(x\circ y')+\tr(y\circ x')-da'.
\end{align}

By \cite[Lemma 4.3.1]{MR4094735}, the action of $\widetilde{M}$ on $\widetilde\X$ identifies $\widetilde{M}$ with a certain similitude group\footnote{In \cite{MR4094735}, this similitude group is denoted by $H_J$.} over $F$. Under this identification, write $\nu:\widetilde{M}\ra\G_m$ for the similitude character, and write $\widetilde{M}^1$ for the kernel of $\nu$.

Write $\O_{\min}$ for the $\widetilde{M}$-orbit of $(1,0,0,0)$ in $\widetilde\X$ over $F$, often called the {\em minimal orbit}.  Recall \cite[Proposition 8.1]{gan2021twisted} that $\O_{\min}$ equals
\begin{align*}
\big\{0\neq(a,x,y,d)\in\widetilde\X\mid x^\#=ay,\,y^\#=dx,\mbox{ and }l(x)\circ l^*(y)=ad\mbox{ for all }l\mbox{ in }L_J(F)\big\},
\end{align*}
where $L_J\subseteq\GL_J$ is the subgroup of linear maps preserving $N_J$, and $(-)^*$ denotes the dual with respect to the trace pairing $(X,Y)\mapsto\tr(X\circ Y)$ on $J$.
\begin{lemma}\label{lem:transitive_O_min}
The group $\widetilde M^1(F)$ acts transitively on $\O_{\min}(F)$.
\end{lemma}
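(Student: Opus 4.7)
The plan is to reduce the lemma to two subclaims: (I) $\widetilde{M}(F)$ itself acts transitively on $\O_{\min}(F)$; and (II) the similitude character $\nu$ restricts to a surjection $\Stab_{\widetilde{M}}(e)(F) \twoheadrightarrow F^\times$, where $e \coloneqq (1,0,0,0) \in \X(F)$. Granting both, the lemma follows by an ``absorb the similitude'' argument: given any $v \in \O_{\min}(F)$, pick $m_0 \in \widetilde{M}(F)$ with $m_0 \cdot e = v$; by (II), find $h \in \Stab_{\widetilde{M}}(e)(F)$ with $\nu(h) = \nu(m_0)$; then $m_0 h^{-1}$ lies in $\widetilde{M}^1(F)$ and still sends $e$ to $v$.

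For (I), I would work with the explicit description of $\O_{\min}$ recalled just above the lemma. A representative of the long element of the relative Weyl group of $\widetilde{M}$, which is $F$-rational in our setting, induces on $\X$ an involution of the form $(a,x,y,d) \mapsto (\pm d, y, x, \pm a)$, reducing to the case $a \neq 0$. The Heisenberg cocharacter $\G_m \hookrightarrow Z(\widetilde{M})^\circ$ (scaling $\X$ by $t$) then normalizes $a = 1$, after which the conditions $x^\# = ay$ and $y^\# = dx$ force $y = x^\#$ and $d = N_J(x)$ (the Jordan norm). It remains to act on $(1, x, x^\#, N_J(x))$ by the subgroup of $\widetilde{M}$ fixing $Fe$; this subgroup acts on $J$ through the similitude group of the Freudenthal--Jordan structure, which is transitive on rank-$\leq 1$ elements of $J(F)$ (see, e.g., \cite[Ch.~IX]{KMRT}), sending $x$ to $0$.

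For (II), I would use the identification of $\widetilde{M}$ with the explicit similitude group from \cite[Lemma 4.3.1]{MR4094735}. Over $\overline{F}$, the stabilizer of $e$ is a parabolic of $\widetilde{M}$ whose Levi decomposes as an $F$-form of $\SL_3 \times \GL_3$; the $F$-split central $\G_m$ of the $\GL_3$-factor maps nontrivially to $\G_m$ via $\nu$, hence surjects on $F$-points. Equivalently, the homomorphism $\nu : \widetilde{M} \to \G_m$ admits an $F$-rational section whose image lies in $\Stab_{\widetilde{M}}(e)$.

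The main obstacle is subclaim (II): verifying the $F$-rational structure of the stabilizer and the existence of the required $\G_m$-subgroup in the quasi-split setting. Once this is in place, the remaining steps are explicit case analysis or consequences of classical results on Freudenthal--Jordan algebras.
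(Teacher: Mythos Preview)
Your two-step strategy (prove $\widetilde{M}(F)$-transitivity, then absorb the similitude via the point stabilizer) is sound in outline, and subclaim (II) is plausible though left sketched. The real problem is in your argument for (I).

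After normalizing to $a = 1$ you correctly arrive at $(1, x, x^\#, N_J(x))$ with $x \in J(F)$ \emph{arbitrary}. You then propose to kill $x$ using the subgroup of $\widetilde{M}$ stabilizing the line $Fe$, saying it ``acts on $J$ through the similitude group of the Freudenthal--Jordan structure, which is transitive on rank-$\le 1$ elements.'' This does not work: the structure group of $J$ acts linearly on $J$ and fixes $0$, so it cannot move any $x \neq 0$ to $0$; and in any case $x$ here is not constrained to have rank $\le 1$. What you actually need are the \emph{unipotent} elements of the stabilizer of $Fe$ --- in Pollack's notation the elements $n(Z)$, which already lie in $\widetilde{M}^1(F)$ and satisfy $n(Z)\cdot(t,0,0,0) = (t, tZ, tZ^\#, t\det Z)$, so that $n(x)^{-1}$ moves $(1,x,x^\#,\det x)$ back to $(1,0,0,0)$.

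A second gap: your reduction to $a \neq 0$ via a single Weyl element swapping $a \leftrightarrow d$ does not cover the case $a = d = 0$, which does occur in $\O_{\min}(F)$ (e.g.\ $(0,x,0,0)$ with $x$ of rank one). The paper handles this with a density argument instead: for any $\mathcal{X} \in \O_{\min}(F)$, the locus of $\widetilde m \in \widetilde{M}^1$ for which $\widetilde m \cdot \mathcal{X}$ has nonzero first coordinate is a nonempty Zariski open, and $\widetilde{M}^1$ is $F$-unirational over an infinite field, so this open has an $F$-point.

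Once you make these two repairs --- and note that both $n(Z)$ and the paper's scaling element $M(t,g_t)$ already lie in $\widetilde{M}^1(F)$ --- the whole argument runs directly inside $\widetilde{M}^1(F)$, and your detour through (I) and (II) becomes unnecessary. This is precisely the paper's approach.
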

\begin{proof}
Write $S$ for the $\widetilde M^1(F)$-orbit of $(1,0,0,0)$, which we want to show equals $\O_{\min}(F)$. For any $t$ in $F^\times$, let $g_t$ be a linear automorphism of $J$ over $F$ such that $\det(g_t(z)) = t^2 \det z$ for all $z$ in $J$ (e.g. we can take $g_t$ to be the operation of multiplying the first row and first column of a Hermitian matrix by $t$). Then \cite[p.~1221]{MR4094735} yields an element $M(t,g_t)$ of $\widetilde{M}^1(F)$ such that
\begin{align*}
M(t, g_t) (a, x, y, d) = (t^{-1} a, t^{-1} g_t(x), tg_t^*(y), td).
\end{align*}
In particular, $(t, 0, 0, 0)$ lies in $S$.

Next, for any $Z$ in $J$, \cite[p.~1221]{MR4094735} also yields an element $n(Z)$ of $\widetilde M^1(F)$ satisfying
\begin{align*}
n(Z) (t, 0, 0,0) = (t, tZ, tZ^\#, t\det Z).
\end{align*}
Therefore $S$ contains every $(a,x,y,d)$ in $\O_{\min}(F)$ with $a\neq0$. On the other hand, given an $\mathcal X$ in $\O_{\min}(F)$, the locus $\{\widetilde{m}\in\widetilde{M}^1\mid \widetilde{m}\cdot\mathcal{X}=(a,x,y,d)\mbox{ with }a\neq0\}$ is a dense open subvariety of $\widetilde{M}^1$ over $F$. Since $\widetilde{M}^1$ is $F$-unirational and $F$ is infinite, this implies that this subvariety contains an $F$-point. Hence $\widetilde M^1(F) \cdot \mathcal X$ meets $S$, so $\mathcal X$ lies in $S$. 
\end{proof}

\subsection{Local Fourier coefficients of $\widetilde{G}$}\label{ss:localfourierE6}
In this subsection, assume that $F$ is a local field. Write $\Omega$ for the minimal representation of $\widetilde{G}(F)$ in the sense of \cite[Definition 3.6]{gan2005minimal} or \cite[Definition 4.6]{gan2005minimal}, which is an irreducible smooth representation of $\widetilde{G}(F)$. When $K/F$ is unramified, $\widetilde{G}$ is unramified over $F$, so $\Omega$ is unramified \cite[Corollary 7.4]{gan2005minimal}. We will use $\Omega$ as a kernel for our exceptional theta lift.

For all $\mathcal{X}$ in $\widetilde{\X}(F)$, write $\psi_{\mathcal{X}}:\widetilde{N}(F)\ra \C^1$ for the unitary character given by $\psi(\langle\mathcal{X},-\rangle)$. Then $(\widetilde{N}(F),\psi_{\mathcal{X}})$-equivariant functionals on $\Omega$ are the local analog of Fourier coefficients for $\Omega$; we normalize them as follows.

\subsubsection{Nonarchimedean case}\label{sss:nonarchE6}
In this subsubsection, assume that $F$ is nonarchimedean. Then there exists a $\widetilde{P}(F)$-equivariant short exact sequence
\begin{align*}
0 \to C_c^\infty(\O_{\min}(F)) \to \Omega_{Z(F)} \to \Omega_{\widetilde N(F)} \to 0
\end{align*}
\cite[Theorem 6.1]{magaard1997exceptional}\footnote{Note that \cite{magaard1997exceptional} works in the split case. However, the proof of \cite[Theorem 6.1]{magaard1997exceptional} holds verbatim in the quasi-split case if one uses \cite[Proposition 11.5 (i)]{gan2005minimal} and \cite[Proposition 11.7 (ii)]{gan2005minimal} instead of \cite[Lemma 6.2]{magaard1997exceptional}.}, where the $\widetilde{P}(F)$-action on $C_c^\infty(\O_{\min}(F))$ is given by                           
\begin{itemize}
    \item $(\widetilde{m}\varphi)(\mathcal X) = |\nu(\widetilde{m})|^{1/5} \varphi(\widetilde{m}^{-1} \mathcal X)$ for $\widetilde{m}$ in $\widetilde M(F)$,
    \item $(\widetilde{n}\varphi)(\mathcal X) = \psi_{\mathcal X} (\widetilde{n}) \varphi(\mathcal X)$ for $\widetilde{n}$ in $\widetilde N(F)$.
\end{itemize}

In particular, for all $\mathcal X$ in $\O_{\min}(F)$ we have
\begin{equation}\label{eqn:twisted_coinvariants_min_orbit}
    C_c^\infty(\O_{\min}(F))_{\widetilde N(F), \psi_{\mathcal X}} \ra^\sim \Omega_{\widetilde N(F), \psi_{\mathcal X}}.
\end{equation}
Moreover, the spaces in (\ref{eqn:twisted_coinvariants_min_orbit}) are 1-dimensional \cite[Lemma 6.2]{magaard1997exceptional}.

\begin{definition}\label{defn:alphanonarch}
Write $\alpha_{\mathcal{X}}$ for the functional in $\Hom_{\widetilde{N}(F)}(\Omega,\psi_{\mathcal{X}})$ identified via \eqref{eqn:twisted_coinvariants_min_orbit} with $\varphi\mapsto\varphi(\mathcal{X})$.
\end{definition}
\begin{lemma}\label{lem:alpha_equivariance}
For all $\widetilde{h}$ in $\widetilde{M}(F)$, $\mathcal{X}$ in $\O_{\min}(F)$, and $\varphi$ in $\Omega$, we have
$$\alpha_{\mathcal X} (\varphi) = |\nu(\widetilde{h})|^{-1/5}\alpha_{\widetilde{h}\cdot\mathcal X}(\widetilde{h}\cdot\varphi).$$
\end{lemma}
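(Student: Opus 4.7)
The plan is to unwind Definition~\ref{defn:alphanonarch} and then exploit the $\widetilde{P}(F)$-equivariance of the short exact sequence recalled in \S\ref{sss:nonarchE6}. By construction, $\alpha_{\mathcal{X}}$ factors as the quotient $\Omega \twoheadrightarrow \Omega_{\widetilde{N}(F),\psi_{\mathcal{X}}}$ followed by the functional on $\Omega_{\widetilde{N}(F),\psi_{\mathcal{X}}}$ that, via the identification \eqref{eqn:twisted_coinvariants_min_orbit}, corresponds to the evaluation $\mathrm{ev}_{\mathcal{X}} : f \mapsto f(\mathcal{X})$ on $C_c^\infty(\O_{\min}(F))$. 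The second bullet in the description of the $\widetilde{P}(F)$-action ensures that $\mathrm{ev}_{\mathcal{X}}$ is $(\widetilde{N}(F),\psi_{\mathcal{X}})$-equivariant, so this factorization makes sense.

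Next, I would verify that every class in $\Omega_{\widetilde{N}(F),\psi_{\mathcal{X}}}$ is represented by an element of $C_c^\infty(\O_{\min}(F))$. Since $\mathcal{X}$ is nonzero (the minimal orbit excludes $0$), the character $\psi_{\mathcal{X}}$ is nontrivial on $\widetilde{N}(F)/Z(F)$, so applying $\psi_{\mathcal{X}}$-twisted $\widetilde{N}(F)/Z(F)$-coinvariants to the short exact sequence kills the rightmost term $\Omega_{\widetilde{N}(F)}$. Thus the first map of the short exact sequence induces a surjection $C_c^\infty(\O_{\min}(F)) \twoheadrightarrow \Omega_{\widetilde{N}(F),\psi_{\mathcal{X}}}$; this is in fact the claim of \eqref{eqn:twisted_coinvariants_min_orbit}. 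For any $\varphi \in \Omega$, I can therefore choose $f \in C_c^\infty(\O_{\min}(F))$ whose image agrees with the class of $\varphi$ in $\Omega_{\widetilde{N}(F),\psi_{\mathcal{X}}}$, so that $\alpha_{\mathcal{X}}(\varphi) = f(\mathcal{X})$.

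Finally, because the short exact sequence is $\widetilde{M}(F)$-equivariant and $\widetilde{h}$ intertwines $(\widetilde{N}(F),\psi_{\mathcal{X}})$-twisted coinvariants with $(\widetilde{N}(F),\psi_{\widetilde{h}\cdot\mathcal{X}})$-twisted ones (using $\psi_{\mathcal{X}}(\widetilde{h}^{-1}\widetilde{n}\widetilde{h}) = \psi_{\widetilde{h}\cdot\mathcal{X}}(\widetilde{n})$, which is immediate from the pairing \eqref{eq:our_symplectic_form}), the translate $\widetilde{h}\cdot f$ in $C_c^\infty(\O_{\min}(F))$ represents the class of $\widetilde{h}\cdot\varphi$ in $\Omega_{\widetilde{N}(F),\psi_{\widetilde{h}\cdot\mathcal{X}}}$. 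Invoking the first bullet of the $\widetilde{P}(F)$-action then gives
\begin{align*}
\alpha_{\widetilde{h}\cdot\mathcal{X}}(\widetilde{h}\cdot\varphi) = (\widetilde{h}\cdot f)(\widetilde{h}\cdot\mathcal{X}) = |\nu(\widetilde{h})|^{1/5}\, f(\widetilde{h}^{-1}\widetilde{h}\cdot\mathcal{X}) = |\nu(\widetilde{h})|^{1/5}\, \alpha_{\mathcal{X}}(\varphi),
\end{align*}
and multiplying through by $|\nu(\widetilde{h})|^{-1/5}$ yields the desired identity. I do not anticipate any serious obstacle here: the argument is a formal unwinding, and the only care needed is to track the $\widetilde{M}(F)$-equivariance of the twisted coinvariants through \eqref{eqn:twisted_coinvariants_min_orbit}, which follows from the $\widetilde{P}(F)$-equivariance of the short exact sequence together with the duality of $\widetilde{\X}$ and $\widetilde{N}/Z$.
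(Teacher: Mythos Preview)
Your proposal is correct and follows the same approach as the paper: the paper's proof is the one-liner ``This follows immediately from the above description of the $\widetilde{M}(F)$-action on $C_c^\infty(\O_{\min}(F))$,'' and your argument simply unpacks this in detail, tracking the $\widetilde{P}(F)$-equivariance of the short exact sequence and the explicit formula $(\widetilde{m}\varphi)(\mathcal X) = |\nu(\widetilde{m})|^{1/5} \varphi(\widetilde{m}^{-1} \mathcal X)$ to arrive at the same conclusion.
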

\begin{proof}
This follows immediately from the above description of the $\widetilde{M}(F)$-action on $C_c^\infty(\O_{\min}(F))$.
\end{proof}

\subsubsection{Archimedean case}\label{sss:localFourierE6arch}
In this subsubsection, assume that $F$ is archimedean. Recall that the maximal compact subgroup $\widetilde{K}$ of $\widetilde{G}(\RR)$ is isomorphic to $\SU(2)_\ell\times^{\{\pm1\}}\SU(6)/\mu_3(\C)$ \cite[Proposition 4.1]{GW96}, and recall from \cite[Proposition 12.11]{gan2005minimal} that $\Omega$ is isomorphic to the representation defined as $\pi'_4$ in \cite[Proposition 5.7]{GW96}. In particular, the minimal $\widetilde{K}$-type of $\Omega$ is $\widetilde{\mathbb{V}}_1\coloneqq\Sym^2\boxtimes\one$.

Let $\mathcal{X}$ be in $\O_{\min}(\RR)$, and recall from \S\ref{ss:qds} the nonzero real number $r$. Write $\widetilde{r}_0(i)$ for the element $(1,-i,-1,i)$ of $(\widetilde{N}/Z)(\C)$. For all integers $v$, write $\widetilde{\mathcal{W}}^{\mathcal{X}}_{v}:\widetilde{M}(\RR)\ra\C$ for the function
\begin{align}\label{eq:Pollackformula}
\widetilde{m}\mapsto\left[\frac{|\langle r\mathcal{X},\widetilde{m}\cdot \widetilde{r}_0(i)\rangle|}{\langle r\mathcal{X},\widetilde{m}\cdot \widetilde{r}_0(i)\rangle}\right]^v\cdot\nu(\widetilde{m})\cdot|\nu(\widetilde{m})|\cdot K_v(|\langle r\mathcal{X},\widetilde{m}\cdot\widetilde{r}_0(i)\rangle|),
\end{align}
which agrees with the expression in \cite[Theorem 1.2.1 (1) (b)]{MR4094735} by \cite[Proposition 2.3.1]{MR4094735}. (Because $\mathcal X$ lies in  $\O_{\min}(\RR)$, we have  $\langle r\mathcal{X},\widetilde{m}\cdot \widetilde{r}_0(i)\rangle \neq 0$ for all $\widetilde m$ in $\widetilde M(\RR)$.)

Recall from \S\ref{ss:qds} the standard basis $\{x_\ell,y_\ell\}$ of the standard representation of $\SU(2)_\ell$, so that $\{x^2_\ell,x_\ell y_\ell,y^2_\ell\}$ is a basis of $\widetilde{\mathbb{V}}_1$. Recall that $\Hom_{\widetilde{N}(\RR)}(\Omega,\psi_{\mathcal{X}})$ is $1$-dimensional by \cite[Theorem 1.2.1 (1) (b)]{MR4094735}.

\begin{definition}\label{defn:archalpha}
Write $\alpha_{\mathcal{X}}$ for the unique map in $\Hom_{\widetilde{N}(\RR)}(\Omega,\psi_{\mathcal{X}})=\Hom_{\widetilde{G}(\RR)}(\Omega,\Ind_{\widetilde{N}(\RR)}^{\widetilde{G}(\RR)}\psi_{\mathcal{X}})$ such that, for all $\abs{v}\leq1$, the function $\alpha_{\mathcal{X}}(x^{1+v}_\ell y^{1-v}_\ell):\widetilde{G}(\RR)\ra\C$ restricted to $\widetilde{M}(\RR)$ equals
\begin{align*}
\widetilde{m}\mapsto\frac1{(1+v)!(1-v)!}\widetilde{\mathcal{W}}^{\mathcal{X}}_{-v}(\widetilde{m}),
\end{align*}
which exists and is well-defined by \cite[Theorem 1.2.1 (1) (b)]{MR4094735}.
\end{definition} 

\begin{lemma}\label{lem:alpha_equivariancearch}
For all $\widetilde{h}$ in $\widetilde{M}^1(\RR)$, $\mathcal{X}$ in $\O_{\min}(\RR)$, and $\varphi$ in $\Omega$, we have
$$\alpha_{\mathcal X} (\varphi) = \alpha_{\widetilde{h}\cdot\mathcal X}(\widetilde{h}\cdot\varphi).$$
\end{lemma}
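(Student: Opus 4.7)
The plan is to mirror the strategy of Lemma \ref{lem:alpha_equivariance}: first check that $\varphi\mapsto\alpha_{\widetilde{h}\cdot\mathcal{X}}(\widetilde{h}\cdot\varphi)$ lies in the $1$-dimensional space $\Hom_{\widetilde{N}(\RR)}(\Omega,\psi_\mathcal{X})$, and then identify the resulting scalar as $1$ by testing on a vector in the minimal $\widetilde{K}$-type and invoking Pollack's explicit formula \eqref{eq:Pollackformula}. The key input throughout is the similitude equivariance $\langle\widetilde{h}\cdot\mathcal{X},\widetilde{h}\cdot\widetilde{n}\rangle=\nu(\widetilde{h})\langle\mathcal{X},\widetilde{n}\rangle$ of the pairing, which follows from identifying $\widetilde{M}$ with the similitude group $H_J$ via \cite[Lemma 4.3.1]{MR4094735}.

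First, to verify the $(\widetilde{N}(\RR),\psi_\mathcal{X})$-equivariance: for any $\widetilde{n}\in\widetilde{N}(\RR)$,
\begin{align*}
\alpha_{\widetilde{h}\cdot\mathcal{X}}(\widetilde{h}\cdot(\widetilde{n}\varphi))=\alpha_{\widetilde{h}\cdot\mathcal{X}}((\widetilde{h}\widetilde{n}\widetilde{h}^{-1})\cdot\widetilde{h}\varphi)=\psi_{\widetilde{h}\cdot\mathcal{X}}(\widetilde{h}\cdot\widetilde{n})\cdot\alpha_{\widetilde{h}\cdot\mathcal{X}}(\widetilde{h}\varphi),
\end{align*}
and by the similitude equivariance combined with $\widetilde{h}\in\widetilde{M}^1(\RR)$, we have $\langle\widetilde{h}\cdot\mathcal{X},\widetilde{h}\cdot\widetilde{n}\rangle=\nu(\widetilde{h})\langle\mathcal{X},\widetilde{n}\rangle=\langle\mathcal{X},\widetilde{n}\rangle$, giving $\psi_{\widetilde{h}\cdot\mathcal{X}}(\widetilde{h}\cdot\widetilde{n})=\psi_\mathcal{X}(\widetilde{n})$. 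Since $\Hom_{\widetilde{N}(\RR)}(\Omega,\psi_\mathcal{X})$ is $1$-dimensional (as recorded in \S\ref{sss:localFourierE6arch}), there is a scalar $c(\widetilde{h},\mathcal{X})$ satisfying $\alpha_{\widetilde{h}\cdot\mathcal{X}}(\widetilde{h}\cdot\varphi)=c(\widetilde{h},\mathcal{X})\alpha_\mathcal{X}(\varphi)$.

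To pin down $c(\widetilde{h},\mathcal{X})$, I would evaluate on $\varphi=x_\ell^{1+v}y_\ell^{1-v}\in\widetilde{\mathbb{V}}_1$ (for any $|v|\leq1$), comparing the induced functions $\widetilde{g}\mapsto\alpha(\widetilde{g}\cdot\varphi)$ on $\widetilde{G}(\RR)$ over the common subset $\widetilde{M}(\RR)$. By Definition \ref{defn:archalpha}, these functions equal $\frac{1}{(1+v)!(1-v)!}\widetilde{\mathcal{W}}^\mathcal{X}_{-v}(\widetilde{m})$ and $\frac{1}{(1+v)!(1-v)!}\widetilde{\mathcal{W}}^{\widetilde{h}\cdot\mathcal{X}}_{-v}(\widetilde{h}\widetilde{m})$, respectively. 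Substituting into \eqref{eq:Pollackformula}, the pairing factor transforms as $\langle r\widetilde{h}\cdot\mathcal{X},(\widetilde{h}\widetilde{m})\cdot\widetilde{r}_0(i)\rangle=\nu(\widetilde{h})\langle r\mathcal{X},\widetilde{m}\cdot\widetilde{r}_0(i)\rangle=\langle r\mathcal{X},\widetilde{m}\cdot\widetilde{r}_0(i)\rangle$, and the similitude factor transforms as $\nu(\widetilde{h}\widetilde{m})\cdot|\nu(\widetilde{h}\widetilde{m})|=\nu(\widetilde{m})\cdot|\nu(\widetilde{m})|$. Therefore every factor in Pollack's formula is unchanged, and $\widetilde{\mathcal{W}}^{\widetilde{h}\cdot\mathcal{X}}_{-v}(\widetilde{h}\widetilde{m})=\widetilde{\mathcal{W}}^\mathcal{X}_{-v}(\widetilde{m})$ identically. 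Evaluating at $\widetilde{m}=1$ gives a nonzero value (since the $K$-Bessel function is strictly positive on the positive reals, and $\mathcal{X}\in\O_{\min}(\RR)$ guarantees $\langle r\mathcal{X},\widetilde{r}_0(i)\rangle\neq0$), so $c(\widetilde{h},\mathcal{X})=1$.

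The only substantive input is the similitude scaling of the pairing (used twice: once for the $\widetilde{N}$-equivariance check and once inside \eqref{eq:Pollackformula}); everything else is a mechanical consequence of $\nu(\widetilde{h})=1$. I expect the cleanest potential pitfall to be bookkeeping of the action conventions for $\widetilde{M}$ on $\widetilde{\X}$ versus $\widetilde{N}/Z$, but since both are realized via the same similitude representation of $H_J$ in the paper's identification, this is straightforward.
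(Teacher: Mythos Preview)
Your proof is correct and is precisely the natural unpacking of the paper's one-line argument, which just says ``this follows immediately from $\nu(\widetilde{h})=1$ and \eqref{eq:Pollackformula}.'' The only minor quibble is your phrasing of the pairing as scaling by $\nu(\widetilde{h})$ rather than being $\widetilde{M}$-invariant (the paper uses the latter convention, since $\widetilde{\X}$ is by definition the dual of $\widetilde{N}/Z$), but since $\nu(\widetilde{h})=1$ this is immaterial.
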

\begin{proof}
This follows immediately from $\nu(\widetilde{h})=1$ and (\ref{eq:Pollackformula}).\end{proof}

\subsection{Global Fourier coefficients of $\widetilde{G}$}\label{ss:globalFourierE6}
In this subsection, assume that $F$ is a number field. For every place $v$ of $F$, write $\Omega_v$ for the minimal representation of $\widetilde{G}(F_v)$ from \S\ref{ss:localfourierE6}, and for every $\mathcal{X}_v$ in $\O_{\min}(F_v$), write $\alpha_{\mathcal{X}_v}:\Omega_v\ra\psi_{v,\mathcal{X}_v}$ for the $\widetilde{N}(F_v)$-equivariant functional from \S\ref{ss:localfourierE6}.

When $F_v$ is nonarchimedean, $K_v/F_v$ is unramified, $\chi_v$ is unramified, and $\psi_v$ has conductor $0$, write $\varphi_{0,v}$ for the nonzero $\widetilde{G}(\O_{F_v})$-fixed element of $\Omega_v$ from \S\ref{ss:sphericalvectorE6} below. In particular, Corollary \ref{cor:calculate_spherical_easy} below implies that, for all $\mathcal{X}$ in $\O_{\min}(F)$, we have $\alpha_{\mathcal{X}_v}(\varphi_{0,v})=1$ for cofinitely many $v$, where $\mathcal{X}_v$ denotes the image of $\mathcal{X}$ in $\O_{\min}(F_v)$.

Write $\Omega$ for $\bigotimes'_v\Omega_v$. Recall that residues of Eisenstein series yield a $\widetilde{G}(\A_F)$-equivariant embedding \cite[\S14.3]{gan2021twisted}
\begin{align*}
\theta:\Omega\hookrightarrow L_{\disc}^2([\widetilde{G}]).
\end{align*}
The (global) Fourier coefficients for $\Omega$ take the following particularly simple form.
\begin{lemma}\label{lemma:c_X(Omega)=1}
After replacing $\theta$ with a $\C^\times$-multiple, the following is true: for all $\mathcal{X}$ in $\O_{\min}(F)$ and $\varphi=\otimes'_v\varphi_v$ in $\Omega$ with $\varphi_v=\varphi_{0,v}$ for cofinitely many $v$, we have
    $$\theta(\varphi)_{\widetilde{N},\psi_{\mathcal{X}}}(1) \coloneqq \int_{[\widetilde N]} \theta(\varphi) (\widetilde{n}) \psi_{\mathcal X}(\widetilde{n})^{-1}\dd{\widetilde{n}} = \prod_v \alpha_{\mathcal{X}_v} (\varphi_v).$$
\end{lemma}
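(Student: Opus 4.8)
The plan is to compute the global Fourier coefficient $\theta(\varphi)_{\widetilde{N},\psi_{\mathcal X}}(1)$ by first unfolding it along the abelian quotient $\widetilde N/Z$, and then matching the resulting expression against a product of the local functionals $\alpha_{\mathcal X_v}$. Concretely, I would proceed as follows. First, restrict to the case where $\mathcal X = (1,0,0,0)$; by Lemma \ref{lem:transitive_O_min}, $\widetilde M^1(F)$ acts transitively on $\O_{\min}(F)$, and since $\theta$ intertwines the global action of $\widetilde{G}(\A_F)$ with right translation on $L^2_{\disc}([\widetilde G])$, the general case follows by translating $\varphi$ by an element of $\widetilde M^1(F)$ and invoking the equivariance Lemmas \ref{lem:alpha_equivariance} and \ref{lem:alpha_equivariancearch} locally (the similitude factors $|\nu(\widetilde h)|^{1/5}$ all equal $1$ for $\widetilde h \in \widetilde M^1$, and the archimedean factor is likewise trivial, so the product is unchanged).

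Next, for $\mathcal X = (1,0,0,0)$ I would unfold the integral over $[\widetilde N]$: writing the integral over $[\widetilde N]$ as an iterated integral over $[Z]$ and $[\widetilde N / Z]$, the inner $[\widetilde N/Z]$-integral picks out the $\psi_{\mathcal X}$-isotypic part of the $Z$-constant term $\theta(\varphi)_Z$, which, via the Fourier expansion of $\theta(\varphi)_Z$ along the abelian group $\widetilde N/Z$, is governed by the $\widetilde P(\A_F)$-equivariant exact sequence $0 \to C_c^\infty(\O_{\min}) \to \Omega_{Z} \to \Omega_{\widetilde N} \to 0$ from \cite{magaard1997exceptional}. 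The key point — which is the standard mechanism behind minimal-representation Fourier expansions — is that the $\psi_{\mathcal X}$-equivariant functional on $\theta(\varphi)$ only sees the $C_c^\infty(\O_{\min}(\A_F))$-part and factors through evaluation at $\mathcal X$. Since $\O_{\min}$ has a single open $\widetilde{G}(F)$-orbit (again by Lemma \ref{lem:transitive_O_min}, now over each completion and globally), there is no sum over orbits, and the stabilizer of $(1,0,0,0)$ is contained in $\widetilde P$ so there is no residual integral; this is exactly the phenomenon that makes the global Fourier coefficient purely local. The normalization of $\theta$ by a $\C^\times$-scalar absorbs the overall proportionality constant between the abstract identification $\Omega_{\widetilde N, \psi_{\mathcal X}} \cong C_c^\infty(\O_{\min})_{\widetilde N, \psi_{\mathcal X}}$ (which is $1$-dimensional, hence only defined up to scalar) and the concrete automorphic realization.

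Finally, having reduced to the statement that $\theta(\varphi)_{\widetilde N, \psi_{\mathcal X}}(1)$ equals a fixed scalar times the image of $\varphi$ under the global evaluation-at-$\mathcal X$ map on $C_c^\infty(\O_{\min}(\A_F))$, I would observe that this global evaluation map is literally $\bigotimes_v'$ of the local evaluation maps that define $\alpha_{\mathcal X_v}$ in Definition \ref{defn:alphanonarch} (and matches Definition \ref{defn:archalpha} at archimedean places via Pollack's formulas), so that $\theta(\varphi)_{\widetilde N, \psi_{\mathcal X}}(1) = c \prod_v \alpha_{\mathcal X_v}(\varphi_v)$ for some $c \in \C^\times$ independent of $\varphi$ and $\mathcal X$ (independence of $\mathcal X$ coming from the transitivity argument above). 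Rescaling $\theta$ by $c^{-1}$ gives the claimed identity; the product is finite because $\alpha_{\mathcal X_v}(\varphi_{0,v}) = 1$ for cofinitely many $v$ by Corollary \ref{cor:calculate_spherical_easy}.

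I expect the main obstacle to be the bookkeeping in the unfolding step: one must justify interchanging the $[Z]$- and $[\widetilde N/Z]$-integrations and carefully track that the $\psi_{\mathcal X}$-Fourier mode of the $Z$-constant term is captured by the $C_c^\infty(\O_{\min})$ subquotient rather than the $\Omega_{\widetilde N}$ quotient — in other words, verifying that $\psi_{\mathcal X}$ for $\mathcal X \in \O_{\min}$ is a "generic enough" character to kill the quotient $\Omega_{\widetilde N}$. This is where the global input (that $\O_{\min}(F)$ is a single $\widetilde M^1(F)$-orbit, Lemma \ref{lem:transitive_O_min}) and the precise $\widetilde P$-module structure of $\Omega_Z$ must be combined; everything else is formal manipulation of Fourier expansions and tensor products.
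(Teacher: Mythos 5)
Your overall skeleton (use $1$-dimensionality of $\Hom_{\widetilde{N}(F_v)}(\Omega_v,\psi_{v,\mathcal{X}_v})$ to get a proportionality constant $c_{\mathcal{X}}$; show $c_{\mathcal{X}}$ is independent of $\mathcal{X}$ via the equivariance Lemmas \ref{lem:alpha_equivariance}, \ref{lem:alpha_equivariancearch}, the $\widetilde{M}^1(F)$-invariance of $\theta(\varphi)$, and the transitivity Lemma \ref{lem:transitive_O_min}; rescale $\theta$) is exactly the paper's argument. However, the unfolding step you sandwich in the middle is neither needed nor, as stated, fully rigorous.

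Concretely: you want to unfold the $[\widetilde{N}]$-integral via $[Z]$ then $[\widetilde{N}/Z]$ and invoke a ``$\widetilde{P}(\A_F)$-equivariant exact sequence $0\to C_c^\infty(\O_{\min})\to\Omega_Z\to\Omega_{\widetilde{N}}\to0$'' to see that the $\psi_{\mathcal{X}}$-mode factors through evaluation at $\mathcal{X}$. But Magaard--Savin's sequence is a \emph{local, nonarchimedean} statement; there is no such sequence at archimedean places (where the paper instead uses Pollack's work, via Definition \ref{defn:archalpha}, to establish $1$-dimensionality), and the adelic version you appeal to would itself require proof. Moreover, even granting such a sequence, the claim that the global evaluation map is ``literally $\bigotimes'_v$ of the local evaluation maps'' is precisely the content of the lemma being proved, not an observation one gets for free. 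The clean way out — and what the paper does — is to skip the unfolding entirely: the automorphic functional $\varphi\mapsto\theta(\varphi)_{\widetilde{N},\psi_{\mathcal{X}}}(1)$ lands in the restricted tensor product $\bigotimes'_v\Hom_{\widetilde{N}(F_v)}(\Omega_v,\psi_{v,\mathcal{X}_v})$ (this uses $\alpha_{\mathcal{X}_v}(\varphi_{0,v})=1$ for cofinitely many $v$, i.e.\ Corollary \ref{cor:calculate_spherical_easy}), which is one-dimensional and spanned by $\prod_v\alpha_{\mathcal{X}_v}$, giving $c_{\mathcal{X}}$ immediately. The rest is your transitivity-and-equivariance argument, which is correct. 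So the gap is in the Fourier-expansion detour; excise it and you have the paper's proof.
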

\begin{proof}
For all $\mathcal{X}$ in $\O_{\min}(F)$ and every place $v$ of $F$, the space $\Hom_{\widetilde{N}(F_v)}(\Omega_v,\psi_{v,\mathcal{X}_v})$ is $1$-dimensional. Hence there is a unique $c_{\mathcal{X}}$ in $\C$ such that, for all $\varphi=\otimes'_v\varphi_v$ in $\Omega$ with $\varphi_v=\varphi_{0,v}$ for cofinitely many $v$, we have
    $$\int_{[\widetilde N]} \theta(\varphi)(\widetilde{n}) \psi_{\mathcal X}(\widetilde{n})^{-1}\dd{\widetilde{n}} = c_{\mathcal X}\prod_v \alpha_{\mathcal{X}_v}(\varphi_v).$$
    We claim that $c_{\widetilde{m}\cdot\mathcal X} = c_{\mathcal X}$ for all $\widetilde{m}$ in $\widetilde M^1(F)$. Indeed, we have 
\begin{align*}
c_{\widetilde{m}\cdot \mathcal X}\prod_v \alpha_{\mathcal X_v}(\varphi_v) &= c_{\widetilde{m}\cdot \mathcal X}\prod_v \alpha_{\widetilde{m}\cdot\mathcal X_v}(\widetilde{m}\cdot\varphi_v) & \mbox{by Lemma \ref{lem:alpha_equivariance} and Lemma \ref{lem:alpha_equivariancearch}}\\
&= \int_{[\widetilde{N}]}\theta(\widetilde{m}\cdot\varphi)(\widetilde{n})\psi_{\widetilde{m}\cdot\mathcal{X}}(\widetilde{n})^{-1}\dd{\widetilde{n}} \\
&= \int_{[\widetilde{N}]}\theta(\varphi)(\widetilde{n}\widetilde{m})\psi_{\mathcal{X}}(\widetilde{m}^{-1}\widetilde{n}\widetilde{m})^{-1}\dd{\widetilde{n}} & \mbox{since $\langle-,-\rangle$ is $\widetilde{M}$-invariant} \\
&= \int_{[\widetilde{N}]}\theta(\varphi)(\widetilde{m}\widetilde{n})\psi_{\mathcal{X}}(\widetilde{n})^{-1}\dd{\widetilde{n}} & \mbox{by $\widetilde{n}\mapsto\widetilde{m}\widetilde{n}\widetilde{m}^{-1}$} \\
&= \int_{[\widetilde{N}]}\theta(\varphi)(\widetilde{n})\psi_{\mathcal{X}}(\widetilde{n})^{-1}\dd{\widetilde{n}} \\ 
&= c_{\mathcal X}\prod_v \alpha_{\mathcal X_v}(\varphi_v).
\end{align*}
Therefore the desired result follows from Lemma \ref{lem:transitive_O_min}.
\end{proof}
Henceforth, we replace $\theta:\Omega\hookrightarrow L_{\disc}^2([\widetilde{G}])$ with a $\C^\times$-multiple such that the conclusion of Lemma \ref{lemma:c_X(Omega)=1} holds.

\subsection{Cubic algebras and $\O_{\min}$}\label{ss:G2local}
Fourier coefficients of $G$ are indexed by \emph{cubic algebras} as follows. Since $G$ is split over $F$, it and its various subgroups that we consider have natural models over $\Z$. Let $R$ be a ring; we say that an $R$-algebra $A$ is \emph{cubic} if $A$ is isomorphic to $R^3$ as an $R$-module. Recall that a \emph{good basis} of~$A$ is an element $(\alpha,\beta)$ of $A^2$ such that $\{1,\alpha,\beta\}$ is an $R$-basis of $A$ and $\alpha\beta$ lies in $R$. Then the proof of \cite[Proposition 3.1]{GGS02}\footnote{While \cite{GGS02} works over $R=\Z$, the proof holds verbatim over general $R$.} shows there exists $(a,b,c,d)$ in $R^4$ satisfying
\begin{align}\label{eq:goodbasis}
\alpha^2 = -ac+b\alpha-a\beta,\quad\beta^2=-bd+d\alpha-c\beta,\quad\alpha\beta=-ad,
\end{align}
and this induces a bijection
\begin{align}
\big\{\mbox{cubic }R\mbox{-algebras equipped with a good basis }(\alpha,\beta)\big\}\ra^\sim\X(R) \label{eqn:cubic_R-algebras}
\end{align}
that descends into an equivalence of groupoids $\{\mbox{cubic }R\mbox{-algebras}\}\ra^\sim M(R)\backslash\X(R)$ \cite[Proposition 3.1]{GGS02}. Under this correspondence, $q(a,b,c,d)$ in $R$ is a representative of the discriminant $\Delta_{A/R}$ in $R/(R^\times)^2$ \cite[p.~116]{GGS02}. 

Fourier coefficients of $\widetilde{G}$ can also be described in terms of cubic algebras as follows. Write $p:\widetilde{\X}\twoheadrightarrow\X$ for the map induced by $N/Z\hookrightarrow\widetilde{N}/Z$. Under our identifications, $p$ corresponds to the map
\begin{align*}
\id\times\tr\times\tr\times\id:F\times J\times J\times F\ra F\times F\times F\times F
\end{align*}
\cite[Lemma 2.3]{ourselves}. Recall from \S\ref{ss:unitarysetup} the set of $F$-algebra embeddings $\{E\hookrightarrow J\}$.

Assume that $R$ is a subring of $F$. Let $J(R)$ be an $R$-submodule of $J$ such that
\begin{itemize}
    \item $J(R)\otimes_RF$ equals $J$,
    \item $J(R)$ contains $1$,
    \item $J(R)$ is closed under $(-)^\#$,
    \item the image of $J(R)$ under $\tr: J\ra F$ lies in $R$.
\end{itemize}
\begin{exmp}\label{exmp}
When $F$ is a nonarchimedean local field, we take $J(\O_F)$ to be the set of Hermitian matrices in $\operatorname{M}_3(\O_K)$.
\end{exmp}
Write $\widetilde{\X}(R)$ for the image of $R\times J(R)\times J(R)\times R$ under our identification $F\times J\times J\times F=\widetilde{\X}(F)$.

Let $\mathcal{E}=(a,b,c,d)$ be an element of $\X(R)$, and write $A$ for the associated cubic $R$-algebra equipped with a good basis $(\alpha,\beta)$. Assume that $A\otimes_RF$ is isomorphic to a cubic \'etale $F$-algebra $E$. Write $\{A\hookrightarrow J(R)\}$ for the set of $R$-module embeddings $i:A\hookrightarrow J(R)$ such that $i_F:E\hookrightarrow J$ is an $F$-algebra embedding.

We have the following (integral, non-monic) generalization of \cite[Lemma 6.1]{ourselves}:
\begin{lemma}\label{lem:integralminorbitfibers}
We have a natural bijection
\begin{align*}
\{A\hookrightarrow J(R)\}\ra^\sim\widetilde{\X}(R)\cap\O_{\min}(F)\cap p^{-1}(\mathcal{E})
\end{align*}
given by $i\mapsto(a,i(\alpha),-i(\beta),d)$.
\end{lemma}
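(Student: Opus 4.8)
The plan is to check that the proposed map $i\mapsto(a,i(\alpha),-i(\beta),d)$ is well-defined, then construct an inverse. \textbf{Well-definedness.} Given $i$ in $\{A\hookrightarrow J(R)\}$, set $\mathcal{X}\coloneqq(a,i(\alpha),-i(\beta),d)$. Since $i(A)\subseteq J(R)$ and $\tr$ sends $J(R)$ into $R$, applying $\id\times\tr\times\tr\times\id$ gives $p(\mathcal X)=(a,\tr i(\alpha),-\tr i(\beta),d)$, and one computes directly from the multiplication laws \eqref{eq:goodbasis} (equivalently, from the characteristic polynomials of $\alpha$ and $\beta$ as elements of the cubic \'etale algebra $E$, which are determined by $(a,b,c,d)$) that $\tr_{E/F}(\alpha)=b$ and $\tr_{E/F}(\beta)=-c$; since $i_F$ is an $F$-algebra embedding it preserves traces, so $p(\mathcal X)=(a,b,c,d)=\mathcal E$. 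That $\mathcal X$ lies in $\widetilde\X(R)$ is immediate from $a,d\in R$ and $i(\alpha),i(\beta)\in J(R)$. The real point is that $\mathcal X\in\O_{\min}(F)$: using the description of $\O_{\min}(F)$ from \cite[Proposition 8.1]{gan2021twisted}, I must verify $i(\alpha)^\#=a\cdot(-i(\beta))$, $(-i(\beta))^\#=d\cdot i(\alpha)$, and $l(i(\alpha))\circ l^*(-i(\beta))=ad$ for all $l$ in $L_J(F)$. Since $i_F$ is an $F$-algebra embedding $E\hookrightarrow J$, it intertwines the adjugate on $E$ (i.e.\ $x\mapsto x^\#=N_{E/F}(x)x^{-1}$ on units, extended polynomially) with $(-)^\#$ on $J$, and likewise it is compatible with the norm form: $N_J(i_F(x))=N_{E/F}(x)$. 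Thus the three conditions reduce to the identities $\alpha^\#=-a\beta$, $\beta^\#=d\alpha$, and $N_{E/F}$-type relations \emph{inside} $E$, which follow from \eqref{eq:goodbasis} by a short direct computation (e.g.\ $\alpha^\# = \alpha^2-(\tr\alpha)\alpha+\tfrac12((\tr\alpha)^2-\tr\alpha^2)$ in the $3$-dimensional algebra $E$, and substituting the expressions for $\alpha^2$, $\tr\alpha=b$ gives $-a\beta$). The clause about $l\in L_J(F)$ is automatic once $\mathcal X$ is a $\widetilde M(F)$-translate of $(1,0,0,0)$, but it is cleaner to note it holds because $i(\alpha)$ and $-i(\beta)$ are rank-$\le1$ elements with the stated adjugate relations.

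\textbf{Surjectivity.} Given $\mathcal Y=(a',x,y,d')$ in $\widetilde\X(R)\cap\O_{\min}(F)\cap p^{-1}(\mathcal E)$, the condition $p(\mathcal Y)=\mathcal E=(a,b,c,d)$ forces $a'=a$, $d'=d$, $\tr x=b$, $\tr y=-c$. Define an $R$-linear map $i:A\to J$ by $1\mapsto 1$, $\alpha\mapsto x$, $\beta\mapsto -y$ (using the good basis $\{1,\alpha,\beta\}$ of $A$). Since $x,y\in J(R)$ and $1\in J(R)$, the image lands in $J(R)$, and $i$ is injective because $\{1,x,-y\}$ are linearly independent over $F$ — this independence follows from $\mathcal Y\in\O_{\min}(F)$ (if $x,y,1$ were dependent, $\mathcal Y$ could not lie in the minimal orbit, as can be checked against the orbit description; alternatively it follows from $E$ being \'etale once we know $i_F$ is a ring map). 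It remains to see $i_F:E\to J$ is an $F$-algebra homomorphism. By \cite[Lemma 6.1]{ourselves} (the monic case over $F$), the fiber $\O_{\min}(F)\cap p^{-1}(\mathcal E)$ is \emph{already} in bijection with $\{E\hookrightarrow J\}$ via the same formula, so $\mathcal Y$ corresponds to a genuine $F$-algebra embedding $j:E\hookrightarrow J$ with $j(\alpha)=x$, $j(\beta)=-y$; then $j=i_F$, which gives both that $i_F$ is multiplicative and that $i$ lies in $\{A\hookrightarrow J(R)\}$. \textbf{Injectivity} of our map is clear: the formula $i\mapsto(a,i(\alpha),-i(\beta),d)$ determines $i(\alpha)$ and $i(\beta)$, hence $i$ by $R$-linearity and $i(1)=1$.

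The main obstacle is the well-definedness direction — specifically, verifying that $(a,i(\alpha),-i(\beta),d)$ satisfies the defining equations of $\O_{\min}$. The cleanest route is to leverage \cite[Lemma 6.1]{ourselves}: once we know $i_F$ is an $F$-algebra embedding $E\hookrightarrow J$, that lemma already tells us $(a,i_F(\alpha),-i_F(\beta),d)\in\O_{\min}(F)\cap p^{-1}(\mathcal E)$, and $i_F(\alpha)=i(\alpha)$, $i_F(\beta)=i(\beta)$. So the entire proof reduces to (i) matching $p(\mathcal X)=\mathcal E$ via the trace computation $\tr_{E/F}\alpha=b$, $\tr_{E/F}\beta=-c$ from \eqref{eq:goodbasis}; (ii) the integrality bookkeeping $\mathcal X\in\widetilde\X(R)$, which is essentially the hypotheses on $J(R)$; and (iii) citing \cite[Lemma 6.1]{ourselves} for the orbit membership and the inverse construction. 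I would present it in exactly that order, treating (i) and (ii) as short direct checks and invoking \cite[Lemma 6.1]{ourselves} as a black box for the structural content over $F$, with the novelty of this lemma being purely the descent of the bijection to the $R$-integral level.
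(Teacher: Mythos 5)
Your overall strategy is to delegate the $F$-level content to \cite[Lemma 6.1]{ourselves} and retain only the integral bookkeeping. But the paper explicitly frames the present lemma as the \emph{(integral, non-monic)} generalization of \cite[Lemma 6.1]{ourselves}, meaning that the earlier lemma treats only the monic case $a=1$. Invoking it as a black box for arbitrary $\mathcal{E}=(a,b,c,d)$ therefore assumes part of what is to be proved — the non-monic statement over $F$ is precisely one of the two things this lemma generalizes. You would at minimum need to reduce to the monic case via the $M(F)$-action, which you do not do, and which is not even possible directly when $a=0$ (this can occur for \'etale $E$, e.g.\ $E\cong F\times F^2$ with a suitable good basis).

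The gap is most visible in the $l\in L_J(F)$ condition, which is the only nontrivial part of verifying $\mathcal{X}\in\O_{\min}(F)$ once the adjugate relations are in place, and both of your shortcuts fail. The claim that the condition is ``automatic once $\mathcal X$ is a $\widetilde M(F)$-translate of $(1,0,0,0)$'' is circular, since being such a translate \emph{is} membership in $\O_{\min}(F)$. And the claim that $i(\alpha)$, $-i(\beta)$ are ``rank-$\le1$ elements'' is simply false: when $E\cong F^3$ and $\alpha$ has three distinct eigenvalues, $i(\alpha)$ is a full-rank diagonal matrix. The paper instead verifies the condition directly, using the explicit description $L_J(F)\cong\{g\in\GL_3(K):\det g\in K^1\}\rtimes\Z/2$ from \cite{bakic2021howe}, treating the $\Z/2$-generator (conjugation) immediately, and for $g\in\GL_3(K)$ expanding $g(i(\alpha))\circ g^*(i(\beta))$ and reducing to the algebraically closed case to see that $i(z)\circ i(z')=i(z)i(z')$, which together with $i(\alpha)\circ i(\beta)=-ad$ gives the required value. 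In the converse direction the paper likewise argues from scratch, using the identity $z^\#=z^2-\tr(z)z+\tr(z^\#)$ in $J$ to recover the relations \eqref{eq:goodbasis} and thereby that $i_F$ is multiplicative. Both of these arguments are genuinely needed here and cannot be outsourced to the monic lemma; your trace computations and the adjugate-compatibility of $i_F$ are correct, but they leave the heart of the forward direction unproved.
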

\begin{proof}
View $E$ as a Freudenthal algebra over $F$ in the sense of \cite[\S37.C]{KMRT}, and write $(-)^\#:E\ra E$ for the adjoint in the sense of \cite[\S38]{KMRT}. Then any $i$ in $\{E\hookrightarrow J\}$ is an embedding of Freudenthal algebras over $F$, so (\ref{eq:goodbasis}) implies that
\begin{align*}
\tr(i(\alpha))=\tr(\alpha)=b,\quad\tr(-i(\beta))=-\tr(\beta)=c,\quad i(\alpha)^\#=i(\alpha^\#)=-ai(\beta),\quad (-i(\beta))^\#=i(\beta^\#)=di(\alpha).
\end{align*}
This shows that $(a,i(\alpha),-i(\beta),d)$ lies in $p^{-1}(\mathcal{E})$. To see that $(a,i(\alpha),-i(\beta),d)$ lies in $\O_{\min}(F)$, first note that (\ref{eq:goodbasis}) implies that $i(\alpha)\circ i(\beta)=-ad$. Next, recall from \cite[p.~330]{bakic2021howe} that the group $L_J(F)$ is isomorphic to
\begin{align*}
\{g\in\GL_3(K)\mid\det g\mbox{ lies in }K^1\}\rtimes\Z/2,
\end{align*}
where the generator of $\Z/2$ acts via $x\mapsto\overline{x}$, and $g$ acts on $J$ via $x\mapsto gx\prescript{t}{}{\overline{g}}$. Since $ad$ lies in $F$, we see that $\overline{i(\alpha)}\circ\overline{i(\beta)}=-ad$. As for $g$, we have
\begin{align*}
g(i(\alpha))\circ g^*(i(\beta)) = \frac12\big[gi(\alpha)i(\beta)g^{-1}+\prescript{t}{}{\overline{g}}^{-1}i(\beta)i(\alpha)\prescript{t}{}{\overline{g}}\big].
\end{align*}
When $F$ is algebraically closed, $i:E\cong F^3\hookrightarrow J\cong\operatorname{M}_3(F)$ is conjugate to the diagonal embedding. Therefore, in general we have $i(z)\circ i(z')=i(z)i(z')$ for all $z$ and $z'$ in $J$, so $i(\alpha)\circ i(\beta)=-ad$ implies that the above expression also equals $-ad$. Altogether, this shows that $(a,i(\alpha),-i(\beta),d)$ lies in $\widetilde{\X}(R)\cap\O_{\min}(F)\cap p^{-1}(\mathcal{E})$.

Conversely, let $(a,x,y,d)$ be an element of $\widetilde{\X}(R)\cap\O_{\min}(F)\cap p^{-1}(\mathcal{E})$. Because $z^\#=z^2-\tr(z)z+\tr (z^\#)$ for all $z$ in $J$, we have
\begin{align*}
x^2&=-\tr(x^\#)+\tr(x)x+x^\# = -a\tr(y)+bx+ay = -ac+bx-a(-y), \\
(-y)^2&=-\tr(y^\#)+y^\#+\tr(y)y = -d\tr(x)+dx+cy = -bd+dx-c(-y),\\
x\circ(-y)&=-ad.
\end{align*}
Hence (\ref{eq:goodbasis}) implies that the unique $R$-module morphism $i:A\ra J(R)$ with $i(1)=1$, $i(\alpha)=x$, and $i(\beta)=-y$ becomes an $F$-algebra embedding $E\hookrightarrow J$ after applying $-\otimes_RF$. In particular, $i$ lies in $\{A\hookrightarrow J(R)\}$.
\end{proof}

\subsection{Local integrals}\label{ss:locint}
In this subsection, assume that $F$ is a local field. We now define the local integrals that arise when calculating the (global) Fourier coefficients of our quaternionic modular forms on $G$. Recall from \S\ref{ss:seesaw} the sign $\epsilon$ and the irreducible smooth representation $\sigma^{\epsilon}$ of $G'(F)$, recall from \S\ref{ss:G'localvector} the sign $\epsilon(E,\chi,\psi)$, and assume that $\epsilon=\epsilon(E,\chi,\psi)$. Then Proposition \ref{prop:epsilon_and_i} shows there exists a unique
$G'(F)$-orbit of $i$ in $\{E\hookrightarrow J\}$ satisfying $\Hom_{i(T_E)(F)}(\sigma^{\epsilon},\one)\neq0$. Let $i$ be an element of this $G'(F)$-orbit, write $\mathcal{X}$ for the corresponding element of $\O_{\min}(F)\cap p^{-1}(\mathcal{E})$ under Lemma \ref{lem:integralminorbitfibers}, and write $\beta_{\mathcal{X}}:\sigma^\epsilon\ra\one$ for the associated $i(T_E)(F)$-invariant functional denoted by $\beta_i$ in Definition \ref{defn:beta}.

\begin{definition}\label{defn:localzetaintegral}
For any $\varphi$ in $\Omega$ and $f$ in $\sigma^{\epsilon}$, write
\begin{align*}
\mathcal{I}(\mathcal{E},\varphi,f)\coloneqq\int_{i(T_E)(F)\backslash G'(F)}\alpha_{\mathcal{X}}(g'\cdot\varphi)\overline{\beta_{\mathcal{X}}(g'\cdot f)}\dd{g'}.
\end{align*}
\end{definition}
Because $G'$ is semisimple, it lies in $\widetilde{M}^1$. Hence Definition \ref{defn:beta}, Lemma \ref{lem:alpha_equivariance}, and Lemma \ref{lem:alpha_equivariancearch} imply that $\mathcal{I}(\mathcal{E},\varphi,f)$ does not depend on the choice of $\mathcal{X}$.

When $F$ is archimedean, write $N$ for the unique odd integer such that $\chi(z)=(z/\abs{z})^N$, and write $n$ for the positive integer $\frac{|N|+1}2$.

We will compute $\mathcal{I}(\mathcal{E},-,-)$ in the unramified case in \S\ref{s:unramified} and in the archimedean case in \S\ref{s:archimedean}:
\begin{prop}\label{prop:unram_and_arch_int}\hfill
\begin{enumerate}
    \item Assume that $F$ is nonarchimedean, $K/F$ and $\chi$ are unramified, and $\psi$ has conductor $0$. Write $\varphi_0$ for the nonzero $\widetilde{G}(\O_F)$-fixed element of $\Omega$ from \S\ref{ss:sphericalvectorE6} below, and recall from \S\ref{ss:localPU3} the nonzero $G'(\O_F)$-fixed element $f_0$ of $\sigma^+$. Then we have
    \begin{align*}
|\mathcal{I}(\mathcal{E},\varphi_0,f_0)|=\begin{cases}
1 & \mbox{if }\mathcal{E}\mbox{ lies in }\X(\O_F)\mbox{ and }\O_{\mathcal{E}}\cong\O_E,\\
0 & \mbox{if }\mathcal{E}\mbox{ does not lie in }\X(\O_F),
\end{cases}
\end{align*}
where $\O_{\mathcal{E}}$ denotes the cubic $\O_F$-algebra associated with $\mathcal{E}$ in $\X(\O_F)$ via~\eqref{eqn:cubic_R-algebras}.

    \item Assume that $F$ is archimedean. Write $\varphi_0$ for the element of $\Omega$ denoted by $\varphi_N$ in \S\ref{ss:Gtildearch} below, and write $f_0$ for the element of $\sigma^-$ from \S\ref{ss:archlocalintegral} below. After replacing $f_0$ or $\varphi_0$ with a $\C^\times$-multiple independent of $\mathcal{E}$ and $\mathcal{X}$, we have
    \begin{align*}
        \mathcal{I}(\mathcal{E},\varphi_0,f_0) = q(\mathcal{E})^{(n-1)/2}\left[\frac{|r(ai+b-ci-d)|}{r(ai+b-ci-d)}\right]^{-n}K_{-n}(|r(ai+b-ci-d)|).
    \end{align*}
\end{enumerate}
\end{prop}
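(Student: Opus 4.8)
The plan is to reduce each local integral $\mathcal{I}(\mathcal{E},\varphi_0,f_0)$ to an essentially combinatorial or special-function computation by unfolding the definition and using the explicit models established earlier in the paper, postponing the actual verifications to the later sections \S\ref{s:unramified} and \S\ref{s:archimedean} as indicated in the statement. In both cases the first move is to unwind Definition \ref{defn:localzetaintegral}: write
$\mathcal{I}(\mathcal{E},\varphi_0,f_0) = \int_{i(T_E)(F)\backslash G'(F)}\alpha_{\mathcal{X}}(g'\cdot\varphi_0)\overline{\beta_{\mathcal{X}}(g'\cdot f_0)}\dd{g'}$,
and recall that $\mathcal{X}$ is the point of $\O_{\min}(F)\cap p^{-1}(\mathcal{E})$ attached via Lemma \ref{lem:integralminorbitfibers} to the essentially unique embedding $i\colon E\hookrightarrow J$ with $\Hom_{i(T_E)(F)}(\sigma^\epsilon,\one)\neq 0$ (Proposition \ref{prop:epsilon_and_i}); we also recall that, by the remark after Definition \ref{defn:localzetaintegral}, the value is independent of the choice of $i$ in its orbit.

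\textbf{Part (1): the unramified case.} Here the strategy is to exploit that \emph{everything in sight is spherical}. The vector $\varphi_0$ is $\widetilde{G}(\O_F)$-fixed, $f_0$ is $G'(\O_F)$-fixed, and by the normalization in Lemma \ref{lemma:c_X(Omega)=1} (see also Corollary \ref{cor:calculate_spherical_easy}, referenced in \S\ref{ss:globalFourierE6}) we have $\alpha_{\mathcal{X}_v}(\varphi_{0,v})=1$ for cofinitely many $v$. I would first observe that the integrand $g'\mapsto\alpha_{\mathcal{X}}(g'\cdot\varphi_0)\overline{\beta_{\mathcal{X}}(g'\cdot f_0)}$ is right $G'(\O_F)$-invariant, so the integral collapses to a sum over the double-coset space $i(T_E)(F)\backslash G'(F)/G'(\O_F)$, weighted by volumes. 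For those cosets with a representative $g'$ for which $g'\cdot\mathcal{X}$ still lies in the standard lattice $\widetilde{\X}(\O_F)$, the term contributes via $\alpha$ and $\beta$ both being unramified functionals; for the others one shows the term vanishes because $\alpha_{\mathcal X}$ is supported, essentially, on the minimal-orbit lattice points. The key input is exactly the elementary statement flagged in the introduction (Lemma \ref{lem:double_coset} and its Hermitian analogue): if $g\in\GL_3(F_p)$ satisfies $g^{-1}i(\O_{E_p})g\subseteq \operatorname{M}_3(\O_{F_p})$, then $g\in i(E_p^\times)\GL_3(\O_{F_p})$. Together with Lemma \ref{lem:spherical_compare_Yang}, which gives $|\beta_i(f_0)|=1$ for the lattice-compatible embeddings, and Lemma \ref{lem:integralminorbitfibers}, which tells us that $\mathcal{E}\in\X(\O_F)$ with $\O_{\mathcal E}\cong\O_E$ is precisely the condition for a lattice-compatible $i$ to exist, this forces the double-coset sum to collapse to a single term of absolute value $1$ — and to be empty (hence zero) when $\mathcal{E}\notin\X(\O_F)$, since then $p^{-1}(\mathcal{E})\cap\widetilde{\X}(\O_F)\cap\O_{\min}(F)=\varnothing$. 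The case $\mathcal{E}\in\X(\O_F)$ but $\O_{\mathcal E}\not\cong\O_E$ is not claimed, so I would not treat it.

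\textbf{Part (2): the archimedean case.} Here the strategy is different: $G'(\RR)$ is compact (so the integral is a finite-volume average), $E\cong\RR^3$, and by \S\ref{sss:archYang} we have $\beta_i=\langle -,f_i\rangle_\sigma$. I would first use the compactness to rewrite $\mathcal{I}(\mathcal{E},\varphi_0,f_0)$ as $\vol(i(T_E)(\RR)\backslash G'(\RR))$ times an explicit matrix coefficient pairing, then feed in the explicit choice of $\varphi_0=\varphi_N$ (the raising operator $\mathcal{D}^+_k$ applied to the highest weight vector of the lowest $\widetilde{K}$-type of $\Omega_\infty$, per \S\ref{ss:Gtildearch}) and $f_0$ (the highest weight vector of $\sigma_\infty^-$, per \S\ref{ss:archlocalintegral}). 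The functional $\alpha_{\mathcal X}$ is given by Pollack's formula (Definition \ref{defn:archalpha}, equation \eqref{eq:Pollackformula}) in terms of the $K$-Bessel function $K_v$ and the pairing $\langle r\mathcal{X},\widetilde m\cdot\widetilde r_0(i)\rangle$. The crux is to evaluate $\langle r\mathcal{X},\widetilde m\cdot\widetilde r_0(i)\rangle$ at the relevant $\widetilde m$: using $\mathcal{X}=(a,i(\alpha),-i(\beta),d)$ from Lemma \ref{lem:integralminorbitfibers} and the explicit action of $G'(\RR)\subseteq\widetilde M^1(\RR)$, one computes that the relevant scalar is $r(ai+b-ci-d)$ up to a factor that is a similitude-type power of $q(\mathcal E)$. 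Combining the $K$-Bessel function, the phase factor, and the power of $q(\mathcal{E})$ — and using that $q(a,b,c,d)$ represents the discriminant $\Delta_{E/F}$ (the $\O_{\min}$ defining equations $x^\#=ay$, $y^\#=dx$ pin down this relation) — yields the stated formula $q(\mathcal{E})^{(n-1)/2}[\,|r(ai+b-ci-d)|/r(ai+b-ci-d)\,]^{-n}K_{-n}(|r(ai+b-ci-d)|)$, after absorbing the volume and normalization constants into the allowed $\C^\times$-rescaling of $\varphi_0$ or $f_0$.

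\textbf{Main obstacle.} In Part (1) the real work is establishing the double-coset lemma Lemma \ref{lem:double_coset} and its Hermitian variant, and checking that $\alpha_{\mathcal X}$ genuinely vanishes off the lattice points — i.e. controlling the support of the minimal-orbit Fourier functional in terms of the explicit model $C^\infty_c(\O_{\min}(F))$ from \eqref{eqn:twisted_coinvariants_min_orbit}. In Part (2) the main obstacle is the bookkeeping in the archimedean matrix-coefficient computation: correctly tracking the action of the raising operator $\mathcal{D}^+_k$ through the theta lift and through Pollack's formula so that the output lands in the correct $K$-Bessel index $-n$ and the correct power of $q(\mathcal{E})$, with all normalization and volume constants genuinely independent of $\mathcal{E}$ and $\mathcal{X}$. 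Both of these are precisely the computations the statement defers to \S\ref{s:unramified} and \S\ref{s:archimedean}.
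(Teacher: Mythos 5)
Your account of Part (1) is essentially the paper's: move the $G'(F)$-action onto $\mathcal{X}$ via the equivariance of $\alpha$ (Lemma \ref{lem:alpha_equivariance}), characterize the support of $g'\mapsto\alpha_{g'^{-1}\cdot\mathcal{X}}(\varphi_0)$ by Corollary \ref{cor:dumbcorsphericalvanishing} and Corollary \ref{cor:calculate_spherical_easy}, reduce the condition "$g'^{-1}\cdot\mathcal{X}\in\widetilde{\X}(\O_F)$" to the double-coset statement Proposition \ref{prop:doublecoset}, and finish with Lemma \ref{lem:spherical_compare_Yang} and a volume normalization. The one additional input the paper needs, which you allude to but should flag explicitly, is Lemma \ref{lem:choose_X_integrally}: it is not automatic that the unique orbit of $i$ with $\Hom_{i(T_E)(F)}(\sigma^+,\one)\neq 0$ admits a representative with $i(\O_E)\subseteq J(\O_F)$, and this compatibility is what makes the double-coset lemma applicable. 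Your remark that the case $\mathcal{E}\in\X(\O_F)$ with $\O_{\mathcal{E}}\not\cong\O_E$ is not claimed is correct.

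In Part (2), the conceptual skeleton (compactness of $G'(\RR)$, raising operator applied to the lowest $\widetilde{K}$-type, Pollack's Bessel formula, Schur orthogonality at the end) matches the paper, but your description of where the factor $q(\mathcal{E})^{(n-1)/2}$ comes from is incorrect. Since $g'\in G'(\RR)\subseteq\widetilde{K}\cap\widetilde{M}^1(\RR)$ fixes $\widetilde{r}_0(i)$, the pairing $\langle r\mathcal{X},g'\cdot\widetilde{r}_0(i)\rangle$ is \emph{exactly} $r(ai+b-ci-d)$, with no power of $q(\mathcal{E})$ attached; there is no "similitude-type" correction. What actually produces $q(\mathcal{E})^{m/2}$ (with $m=n-1$) is a separate and nontrivial step: the raising operator $\mathcal{D}_N$, after replacement by $\mathcal{D}_N'$ via Lemma \ref{lem:archcommutators}, shifts the $K$-Bessel index and also produces the polynomial factor $\bigl[(g'\cdot Z_2,x)(g'\cdot Z_1,y)-(g'\cdot Z_2,y)(g'\cdot Z_1,x)\bigr]^m$ in $g'$ (Proposition \ref{prop:firststepforlocalzetaarch}). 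Lemma \ref{lem:archmatrixcoeff} then identifies this polynomial, as a function on $G'(\RR)$, with $C\,q(\mathcal{E})^{m/2}\,\langle g'\cdot f_0,f_i\rangle_\sigma$, by comparing a Young-symmetrizer model of $\sigma_N$ with a $\Sym^{3m}$ model and evaluating at a convenient unipotent $g'$; the $q(\mathcal{E})^{1/2}$ arises from a Vandermonde-type determinant of the eigenvalues of $x$, not from the defining equations of $\O_{\min}$. Without this matrix-coefficient identity the integrand is not manifestly a multiple of $|\langle g'\cdot f_0,f_i\rangle_\sigma|^2$, and Schur orthogonality cannot be applied; so this is the genuine gap in your sketch of Part (2).
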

\begin{proof}
Part (1) is Theorem \ref{prop:unramified_integral}, and part (2) follows from Theorem \ref{thm:arch_integral}.
\end{proof}

\subsection{Global Fourier coefficients of $G$}\label{ss:main}
In this subsection, assume that $F$ is a number field. We use an exceptional theta lift to construct our quaternionic modular forms on $G$ as follows. Recall from \S\ref{ss:Yang} the sequence $(\epsilon_v)_v$ and the associated cuspidal automorphic representation $\sigma$ of $G'(\A_F)$. For all archimedean places $v$ of $F$, write $r_v$ for the nonzero real number associated with $\psi_v$ from \S\ref{ss:qds}, write $N_v$ for the odd integer associated with $\chi_v$ from \S\ref{ss:locint}, and write $n_v$ for the positive integer $\frac{|N_v|+1}2$.

Write $\theta(-)$ for the theta lift from $G'$ to $G$ from \cite[Definition 2.8]{ourselves}. Assume that $K_v/F_v$ is unramified for every place $v$ of $F$ above $2$, and assume that $L(\frac12,\chi)\neq0$. Then \cite[Theorem B]{ourselves} shows that $\pi\coloneqq\theta(\sigma)$ is a cuspidal automorphic representation of $G(\A_F)$ that is quaternionic of weight $(n_v)_{v\mid\infty}$.

We actually work \emph{without} the unramified assumption at 2, as follows. In this generality, our arguments from \cite{ourselves} still yield a $G(\A_F)$-equivariant map  (a priori possibly zero) $\pi \coloneqq \bigotimes'_v\theta(\sigma_v^{\epsilon_v})\ra L^2_{\cusp}([G])$, where $\theta(\sigma_v^{\epsilon_v})$ is isomorphic to $\pi_{n_v}$ for all archimedean places $v$ of $F$. After fixing such isomorphisms, Definition \ref{defn:Fourier} still goes through, where now $\mathcal F^\infty$ lies in $\bigotimes'_{v\nmid\infty} \theta(\sigma_v^{\epsilon_v})$. For example, if the map $\pi\ra L^2_{\cusp}([G])$ is zero, then $a_{\mathcal E} (\mathcal F^\infty)$ vanishes for all $\mathcal{E}$ in $\X(F)$.

We will calculate the Fourier coefficients of certain elements of $\pi$. Let $S$ be a finite set of places of $F$ where
\begin{itemize}
\item for all $v$ not in $S$, $v$ is nonarchimedean, $K_v/F_v$ and $\chi_v$ are unramified, and $\psi_v$ has conductor $0$,
\item $\O_{K,S}$ is a free $\O_{F,S}$-module.
\end{itemize}
Recall from \S\ref{ss:Yang} the nowhere vanishing, translation-invariant 1-form $\mu$ on $\R^1_{\O_{K,S}/\O_{F,S}} \mathbb G_m$. For all $v$ not in~$S$, write $\varphi_{0,v}$ for the nonzero $\widetilde{G}(\O_{F_v})$-fixed element of $\Omega_v$ from \S\ref{ss:sphericalvectorE6} below, and write $f_{0,v}$ for the nonzero $G'(\O_{F_v})$-fixed element of $\sigma_v^+$ from \S\ref{ss:localPU3}. For all archimedean places $v$ of $F$, write $\varphi_{0,v}$ and $f_{0,v}$ for the elements of $\Omega_v$ and $\sigma_v^-$, respectively, from Proposition \ref{prop:unram_and_arch_int}.(2).

Let $\varphi=\otimes'_v\varphi_v$ in $\Omega$ and $f=\otimes'_vf_v$ in $\sigma$ be elements such that, for all archimedean places $v$ of $F$ or $v$ not in~$S$, we have $\varphi_v=\varphi_{0,v}$ and $f_v=f_{0,v}$. For every place $v$ of $F$, write $\mathcal{F}_v\coloneqq\theta(\varphi_v,f_v)$, and write $\mathcal{F}\coloneqq\otimes'_v\mathcal{F}_v$. For all archimedean places $v$ of $F$, Proposition \ref{prop:hwv} below indicates that $\mathcal{F}_v$ lies in the highest weight space of the minimal $K$-type of $\pi_v$. Fix the isomorphism $\pi_v\cong\pi_{n_v}$ such that $\mathcal{F}_v$ corresponds to $x_\ell^{2n_v}$.

\begin{theorem}\label{thm:general}
If $\mathcal{E}$ does not lie in $\X(\O_{F,S})$, then $a_{\mathcal{E}}(\mathcal{F}^\infty)=0$. After replacing $\mathcal{F}$ with a $\C^\times$-multiple, the following is true: for all $\mathcal{E}$ in $\X(F)$, if $\mathcal{E}$ lies in $\X(\O_{F,S})$ and corresponds to a cubic \'etale $F$-algebra $E$, then
\begin{itemize}
    \item if the cubic $\O_{F,S}$-algebra corresponding to $\mathcal{E}$ is $\O_{E,S}$ and $\epsilon_v=\epsilon_v(E_v,\chi_v,\psi_v)$ for all $v$ in $S$, then
    \begin{align*}
        |a_{\mathcal{E}}(\mathcal{F}^\infty)|^2 = L(\textstyle\frac12,\Ind^F_K\chi\otimes V_E)\cdot\displaystyle\prod_{v\mid\infty}q(\mathcal{E}_v)^{n_v-1/2}\cdot\prod_{\substack{v\nmid\infty\\ v\in S}}|\mathcal{I}_v(\mathcal{E}_v,\varphi_v,f_v)|^2\cdot \prod_{v\in S}C_v,
    \end{align*}
    where $V_E$ denotes the $2$-dimensional Artin representation associated with $E/F$, and $C_v$ is a nonzero constant depending only on $\chi_v$, $E_v$, and $\mu$,
    \item if $\epsilon_v\neq\epsilon_v(E_v,\chi_v,\psi_v)$ for some $v$ in $S$, then $a_{\mathcal{E}}(\mathcal{F}^\infty)=0$.
\end{itemize}
\end{theorem}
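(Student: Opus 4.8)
The plan is to unwind the definition of $a_{\mathcal{E}}(\mathcal{F}^\infty)$ through the exceptional theta lift and reduce to the local integrals $\mathcal{I}_v$ and the Yang period computation of Proposition \ref{prop:Yang}. First I would express the $\psi_\mathcal{E}$-Fourier coefficient of $\theta(\varphi,f)$ along $N$ in terms of the Fourier coefficients of the theta kernel along $\widetilde{N}$. Concretely, $\theta(\varphi,f)_{N,\psi_\mathcal{E}}(1)$ is computed by integrating $\theta(\varphi)$ against $f$ over $[G']$; unfolding the Fourier expansion of $\theta(\varphi)$ along $\widetilde{N}$ and using that $p:\widetilde{\X}\twoheadrightarrow\X$ has fibers over $\mathcal{E}\cap\O_{\min}(F)$ described by $\{A\hookrightarrow J(\O_{F,S})\}$ (Lemma \ref{lem:integralminorbitfibers}), one should obtain — after using Lemma \ref{lemma:c_X(Omega)=1} to factor the global Fourier coefficient of $\Omega$ — an identity of the shape
\begin{align*}
\theta(\varphi,f)_{N,\psi_\mathcal{E}}(1) = \sum_{[i]}\int_{i(T_E)(F)\backslash G'(\A_F)}\theta(g'\cdot\varphi)_{\widetilde{N},\psi_{\mathcal{X}_i}}(1)\,\overline{\mathcal{P}_i(g'\cdot f)}\,\dd{g'},
\end{align*}
where $[i]$ runs over $G'(F)$-orbits in $\{A\hookrightarrow J(\O_{F,S})\}$. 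The vanishing statement — $a_\mathcal{E}(\mathcal{F}^\infty)=0$ when $\mathcal{E}\notin\X(\O_{F,S})$ — then follows because the fiber $p^{-1}(\mathcal{E})\cap\widetilde{\X}(\O_{F,S})\cap\O_{\min}(F)$ is empty in that case (the integral $\mathcal{I}_v(\mathcal{E}_v,\varphi_{0,v},f_{0,v})$ already vanishes at the relevant $v$ by Proposition \ref{prop:unram_and_arch_int}(1)), and the purely local vanishing when $\epsilon_v\neq\epsilon_v(E_v,\chi_v,\psi_v)$ follows from $\Hom_{i_v(T_{E_v})(F_v)}(\sigma^{\epsilon_v}_v,\one)=0$ (Proposition \ref{prop:epsilon_and_i}), which forces the period $\mathcal{P}_i$ to vanish.

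Next I would make the integral factorizable. Using that $\mathcal{P}_i(f)$ for the specific vector $f_i=\otimes'_v f_{i_v}$ is handled by Proposition \ref{prop:Yang}, and that a general factorizable $f=\otimes'_v f_v$ differs from $f_i$ by local translates, Lemma \ref{lemma:equiv_of_f_Xv} and Definition \ref{defn:beta} let me rewrite
\begin{align*}
\int_{i(T_E)(F)\backslash G'(\A_F)}\theta(g'\cdot\varphi)_{\widetilde{N},\psi_{\mathcal{X}}}(1)\,\overline{\mathcal{P}_i(g'\cdot f)}\,\dd{g'} = \mathcal{P}_i(f_i)\cdot\prod_v\mathcal{I}_v(\mathcal{E}_v,\varphi_v,f_v),
\end{align*}
where the key input is that globally $\alpha_{\mathcal{X}}$ factors as $\prod_v\alpha_{\mathcal{X}_v}$ (Lemma \ref{lemma:c_X(Omega)=1}) and $\mathcal{P}_i$ factors through $\prod_v\beta_{\mathcal{X}_v}$ after spreading out. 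Taking absolute values squared and invoking Proposition \ref{prop:Yang} for $|\mathcal{P}_i(f_i)|^2$, the $L$-value $L(\frac12,\Ind^F_K\chi\otimes\Ind^F_E\one)$ appears; dividing by $L(\frac12,\chi)$ — which is nonzero by hypothesis, and which arises from the constant term / the difference $\Ind^F_E\one=\one\oplus V_E$ together with the fact that the $\one$-part of the theta lift is accounted for by the cuspidality input of \cite[Theorem B]{ourselves} — converts this into $L(\frac12,\Ind^F_K\chi\otimes V_E)$. At the unramified and archimedean places I substitute the explicit values from Proposition \ref{prop:unram_and_arch_int}: part (1) contributes $1$ (so those factors disappear from the product over $v\in S$ when $\O_\mathcal{E}\cong\O_E$, and force vanishing otherwise — recovering the condition that the cubic $\O_{F,S}$-algebra attached to $\mathcal{E}$ is $\O_{E,S}$), while part (2) contributes the $K$-Bessel factor and $q(\mathcal{E}_v)^{(n_v-1)/2}$; these archimedean Bessel factors cancel precisely against $\mathcal{W}^{\mathcal{E}_v}_{n_v}$ in the definition of $a_\mathcal{E}$ (Definition \ref{defn:Fourier}), leaving behind $q(\mathcal{E}_v)^{n_v-1/2}$ after combining the $(n_v-1)/2$ from $\mathcal{I}_\infty$ with the $1/2$ from $\Delta_{\O_E/\Z}^{1/2}$ in Proposition \ref{prop:Yang}.

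The main obstacle, I expect, is the first step: rigorously justifying the unfolding that produces the identity relating $\theta(\varphi,f)_{N,\psi_\mathcal{E}}(1)$ to a sum over $G'(F)$-orbits of embeddings of a $G'(\A_F)$-integral against $\mathcal{P}_i$. This requires carefully tracking the adelic integration over $[N]$ versus $[\widetilde{N}]$, controlling convergence of the theta integral over $[G']$ (which is licensed by cuspidality of $\sigma$ and the square-integrability of $\theta(\varphi)$), and — most delicately — verifying that the relevant Fourier coefficient $\theta(\varphi)_{\widetilde{N},\psi_\mathcal{X}}$ is supported exactly on the minimal orbit, so that Lemma \ref{lem:integralminorbitfibers} governs which $\mathcal{X}$ contribute. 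The interchange of the $[G']$-integral with the $[\widetilde{N}]$-integral, and the passage to the quotient $i(T_E)(F)\backslash G'(\A_F)$ via the stabilizer computation $\Stab_{G'}(i)=i(T_E)$, are the technical heart; once that identity is in hand, the rest is bookkeeping with the already-established local computations and Proposition \ref{prop:Yang}. I would also need to confirm that the nonzero constants produced at places in $S$ (the $C_v$ and the local-integral normalizations) are genuinely independent of $\mathcal{E}_v$ in the way asserted, which follows from their explicit descriptions but should be stated carefully.
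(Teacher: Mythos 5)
Your proposal follows the paper's approach in broad outline: unwind $a_{\mathcal{E}}(\mathcal{F}^\infty)$ through the exceptional theta lift, invoke Lemma \ref{lemma:c_X(Omega)=1} to factor the Fourier coefficient of the theta kernel, pass to the torus period $\mathcal{P}_i$ and factor it into the local functionals $\beta_{\mathcal{X}_v}$, then substitute Proposition \ref{prop:Yang} and Proposition \ref{prop:unram_and_arch_int}. You also correctly locate the technical crux in the unfolding identity relating $\theta(\varphi,f)_{N,\psi_{\mathcal{E}}}(1)$ to an integral of $\theta(g'\cdot\varphi)_{\widetilde{N},\psi_{\mathcal{X}}}(1)$ against $\overline{\mathcal{P}_i(g'\cdot f)}$; the paper does not re-derive this but cites it directly from \cite[(6.1)]{ourselves}, and the reduction from a sum over orbits to a single $i$ is handled by the uniqueness statement \cite[Lemma 15.5.(2)]{gan2021twisted} already recorded in \S\ref{ss:Yang}. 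Your version of that identity has a domain mistake, though: once you have extracted $\mathcal{P}_i(g'\cdot f)$ (itself an integral over $[i(T_E)]$), the outer integral must be over $i(T_E)(\A_F)\backslash G'(\A_F)$, not $i(T_E)(F)\backslash G'(\A_F)$ --- otherwise the $[i(T_E)]$-integration is counted twice.

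The substantive gap is your explanation of the factor $L(\frac12,\chi)$. After applying Proposition \ref{prop:Yang}, one does have $L(\frac12,\Ind_K^F\chi\otimes\Ind_E^F\one) = L(\frac12,\chi)\cdot L(\frac12,\Ind_K^F\chi\otimes V_E)$, but the extra $L(\frac12,\chi)$ is \emph{not} removed by any cuspidality or ``$\one$-part of the theta lift'' argument as you suggest. It is simply an $E$-independent constant, and is absorbed --- together with $\Delta_{\O_F/\Z}^{3/2}$ and the $(2n_v)!$ arising from comparing with the normalization $\mathcal{W}^{\mathcal{E}_v}_{n_v}(x_\ell^{2n_v})(1)$ in Definition \ref{defn:Fourier} --- into the ``after replacing $\mathcal{F}$ with a $\C^\times$-multiple'' clause of the statement. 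This is exactly why the theorem is phrased with that renormalization, and why $L(\frac12,\chi)\neq 0$ is needed (so the multiple is genuinely in $\C^\times$). Relatedly, your bookkeeping of the archimedean exponent is slightly off: $|\mathcal{I}_\infty|^2$ contributes $q(\mathcal{E}_v)^{n_v-1}$ (the square of $q(\mathcal{E}_v)^{(n_v-1)/2}$), which combines with $q(\mathcal{E}_v)^{1/2}$ from $\Delta_{\O_E/\Z}^{1/2}=\Delta_{\O_F/\Z}^{3/2}\prod_{v\mid\infty}q(\mathcal{E}_v)^{1/2}$ to yield $q(\mathcal{E}_v)^{n_v-1/2}$. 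Finally, a caution: you claim that $\O_{\mathcal{E}}\not\cong\O_{E,S}$ ``forces vanishing,'' but Proposition \ref{prop:unram_and_arch_int}(1) computes the unramified local integral only when $\mathcal{E}\notin\X(\O_F)$ or $\O_{\mathcal{E}}\cong\O_E$; when $\mathcal{E}\in\X(\O_F)$ but $\O_{\mathcal{E}}\not\cong\O_E$, the local integral is not computed, and indeed the theorem statement deliberately makes no assertion in that case.
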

\begin{proof}
By our choice of isomorphism $\pi_v\cong\pi_{n_v}$ for all archimedean places $v$ of $F$, we have 
\begin{align}\label{eq:formula1}
\begin{split}\mathcal{F}_{N,\psi_{\mathcal{E}}}(1) &= a_{\mathcal{E}}(\mathcal{F}^\infty)\prod_{v\mid\infty}\mathcal{W}^{\mathcal{E}_v}_{n_v}(x_\ell^{2n_v})(1) \\
&= a_{\mathcal{E}}(\mathcal{F}^\infty)\prod_{v\mid\infty}\frac1{(2n_v)!}\left[\frac{|r_v(a_vi+b_v-c_vi-d_v)|}{r_v(a_vi+b_v-c_vi-d_v)}\right]^{-n_v}K_{-n_v}(|r_v(a_vi+b_v-c_vi-d_v)|),
\end{split}
\end{align}
where $(a_v,b_v,c_v,d_v)$ denotes $\mathcal{E}_v$.

If $\epsilon_v\neq\epsilon_v(E_v,\chi_v,\psi_v)$ for some $v$ in $S$, then \cite[Proposition 6.2]{ourselves} and the discussion from \S\ref{ss:Yang} show that $\mathcal{F}_{N,\psi_{\mathcal{E}}}(1)=0$ and hence $a_{\mathcal{E}}(\mathcal{F}^\infty)=0$. If $\epsilon_v=\epsilon_v(E_v,\chi_v,\psi_v)$ for all $v$ in $S$, then \cite[(6.1)]{ourselves} indicates that
    \begin{align}\label{eq:ourselves}
    \mathcal{F}_{N,\psi_\mathcal{E}}(1) = \theta(\varphi,f)_{N,\psi_{\mathcal{E}}}(1)=\int_{i(T_E)(\A_F)\backslash G'(\A_F)}\theta(g'\cdot\varphi)_{\widetilde{N},\psi_{\mathcal{X}}}(1)\overline{\mathcal{P}_i(g'\cdot f)}\dd{g'},
    \end{align}
where $i$ is any element of the $G'(F)$-orbit from \S\ref{ss:Yang}, $\mathcal{X}$ is the corresponding element of $\O_{\min}(F)\cap p^{-1}(\mathcal{E})$ under Lemma \ref{lem:integralminorbitfibers}, and $\mathcal{P}_i:\sigma\ra\one$ is the associated $i(T_E)(\A_F)$-invariant functional from \S\ref{ss:Yang}.

Recall from Proposition \ref{prop:Yang} the element $f_i$ of $\sigma$. First, we claim that, for all $f$ in $\sigma$ of the form $\otimes'_vf_v$,
    \begin{align}
    \mathcal{P}_i(g'\cdot f)=\mathcal{P}_i(f_i)\cdot\prod_v\beta_{\mathcal{X}_v}(g'_v\cdot f_v), \label{eqn:factoring_P_i}
    \end{align}
    where $v$ runs over places of $F$. To see this, note that $\mathcal{P}_i$ is an element of the $1$-dimensional space
    \begin{align*}
        \Hom_{i(T_E)(\A_F)}(\sigma,\one)=\sideset{}{'}\bigotimes_v\Hom_{i(T_E)(F_v)}(\sigma_v^{\epsilon_v},\one),
    \end{align*}
    so $\mathcal{P}_i$ is a $\C$-multiple of $\prod_v\beta_{\mathcal{X}_v}$. Evaluating at $f=f_i$ shows that this multiple is $\mathcal{P}_i(f_i)$, as desired.

    The claim~\eqref{eqn:factoring_P_i} and Lemma \ref{lemma:c_X(Omega)=1} show that (\ref{eq:ourselves}) equals
    \begin{align*}
    \mathcal{F}_{N,\psi_\mathcal{E}}(1)&=\overline{\mathcal{P}_i(f_i)}\cdot\prod_v\int_{i(T_E)(F_v)\backslash G'(F_v)}\alpha_{\mathcal{X}_v}(g'_v\cdot\varphi_v)\overline{\beta_{\mathcal{X}_v}(g'_v\cdot f_v)}\dd{g'_v} \\
    &=\overline{\mathcal{P}_i(f_i)}\cdot\prod_v\mathcal{I}_v(\mathcal{E}_v,\varphi_v,f_v),
    \end{align*}
    where $\mathcal{I}_v(\mathcal{E}_v,-,-)$ denotes the integral from Definition \ref{defn:localzetaintegral}. If $\mathcal{E}$ does not lie in $\X(\O_{F,S})$, then Proposition \ref{prop:unram_and_arch_int}.(1) shows that this vanishes. If $\mathcal{E}$ does lie in $\X(\O_{F,S})$ and corresponds to $\O_{E,S}$ for some \'etale $F$-algebra $E$, then Proposition \ref{prop:Yang} and Proposition \ref{prop:unram_and_arch_int} yield
    \begin{align}\label{eq:formula2}
    \begin{split}
    |\mathcal{F}_{N,\psi_{\mathcal{E}}}(1)|^2 &= |\mathcal{P}_i(f_i)|^2\cdot\prod_v|\mathcal{I}_v(\mathcal{E}_v,\varphi_v,f_v)|^2\\
&=L(\textstyle\frac12,\chi)\cdot L(\textstyle\frac12,\Ind^F_K\chi\otimes V_E)\cdot\Delta_{\O_E/\Z}^{1/2}\cdot\displaystyle\prod_{\substack{v\nmid\infty\\ v\in S}}|\mathcal{I}_v(\mathcal{E}_v,\varphi_v,f_v)|^2\cdot\prod_{v\in S}C_v\\
&\quad\cdot\abs{\prod_{v\mid\infty}q(\mathcal{E}_v)^{(n_v-1)/2}\left[\frac{|r_v(a_vi+b_v-c_vi-d_v)|}{r_v(a_vi+b_v-c_vi-d_v)}\right]^{-n_v}K_{-n_v}(|r_v(a_vi+b_v-c_vi-d_v)|)}^2.
\end{split}
\end{align}
Finally, by rewriting
\begin{align*}
\Delta_{\O_E/\Z} = \Delta_{\O_F/\Z}^3\cdot\Nm_{\O_F/\Z}(\Delta_{\O_E/\O_F}) = \Delta_{\O_F/\Z}^3\cdot\prod_{v\mid\infty}q(\mathcal{E}_v),
\end{align*}
the desired result follows from comparing (\ref{eq:formula2}) with (\ref{eq:formula1}).
\end{proof}

For nonarchimedean $v$ in $S$, we can explicitly calculate $\mathcal{I}_v(\mathcal{E}_v,\varphi_v,f_v)$ for certain $\varphi_v$ and $f_v$, which yields:

\begin{theorem}\label{thm:refined}
We can choose $\varphi$ and $f$ such that the following is true: for all $\mathcal{E}$ in $\X(F)$,
\begin{itemize}
\item if $\mathcal{E}$ does not lie in $\X(\O_F)$, then $a_{\mathcal{E}}(\mathcal{F}^\infty)=0$,
\item if $\mathcal{E}$ lies in $\X(\O_F)$ and corresponds to the ring of integers of a cubic \'etale $F$-algebra $E$, then
\begin{align*}
|a_{\mathcal{E}}(\mathcal{F}^\infty)|^2 = \begin{cases}
L(\textstyle\frac12,\Ind^F_K\chi\otimes V_E)\cdot\displaystyle\prod_{v\mid\infty}q(\mathcal{E}_v)^{n_v-1/2} & \text{if $\epsilon_v=\epsilon(E_v,\chi_v,\psi_v)$ for all $v$ in $S$},\\
0 & \mbox{otherwise.}
\end{cases}
\end{align*}
\end{itemize}
\end{theorem}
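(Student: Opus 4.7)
The plan is to deduce Theorem~\ref{thm:refined} from Theorem~\ref{thm:general} by replacing the unspecified ramified local data with the custom-designed vectors provided by Proposition~\ref{prop:ramified_local_integrals}. At each nonarchimedean $v\in S$, this proposition furnishes choices of $\varphi_v\in\Omega_v$ and $f_v\in\sigma_v^{\epsilon_v}$ with the property that $\mathcal{I}_v(\mathcal{E}_v,\varphi_v,f_v)$ vanishes unless $\mathcal{E}_v\in\X(\O_{F_v})$, the cubic $\O_{F_v}$-algebra associated with $\mathcal{E}_v$ is isomorphic to $\O_{E_v}$, and $\epsilon_v=\epsilon_v(E_v,\chi_v,\psi_v)$; in the favorable case, one wants $|\mathcal{I}_v|^2\cdot C_v$ to equal $1$, so that the mixed product $\prod_{v\in S,\,v\nmid\infty}|\mathcal{I}_v|^2\cdot\prod_{v\in S,\,v\nmid\infty}C_v$ appearing in Theorem~\ref{thm:general} collapses.

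With this substitution, the only remaining constants are the archimedean factors $\prod_{v\mid\infty}C_v$, which depend only on $\chi_v$ and $\mu$: since the epsilon condition at each archimedean place forces $E_v\cong\RR^3$, these form a single positive real scalar independent of $\mathcal{E}$, which I would absorb by multiplying $\varphi$ by $(\prod_{v\mid\infty}C_v)^{-1/2}$. The global integrality $\mathcal{E}\in\X(\O_F)$ then falls out of local integrality: at $v\notin S$ from Proposition~\ref{prop:unram_and_arch_int}(1), and at nonarchimedean $v\in S$ from the prescribed vanishing of the custom $\mathcal{I}_v$. Combining these conditions place-by-place forces $\O_{\mathcal{E}}=\O_E$ globally. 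Vanishing when some $\epsilon_v\neq\epsilon_v(E_v,\chi_v,\psi_v)$ is immediate from Theorem~\ref{thm:general} itself (or equivalently from Proposition~\ref{prop:epsilon_and_i} at the failing place).

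The main obstacle is the content hidden in Proposition~\ref{prop:ramified_local_integrals}: constructing the custom vectors $\varphi_v$ and $f_v$ at the ramified nonarchimedean places and computing $\mathcal{I}_v$ explicitly. One expects to define $\varphi_v$ via the concrete $\widetilde{P}(F_v)$-module description of $\Omega_v$ from \S\ref{sss:nonarchE6}, choosing the associated element of $C_c^\infty(\O_{\min}(F_v))$ to be supported near the specific $\widetilde{M}^1(F_v)$-translate of the point $\mathcal{X}$ coming from the target cubic \'etale algebra, and to take $f_v$ as a translate of Yang's vector $f_{i_v}$ so that the matrix coefficient $\beta_{\mathcal{X}_v}(g'_v\cdot f_v)$ has support compatible with $\alpha_{\mathcal{X}_v}(g'_v\cdot\varphi_v)$. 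Once the two supports are aligned, the integral over $i(T_E)(F_v)\backslash G'(F_v)$ should unfold to an explicit expression whose modulus matches $C_v^{-1/2}$, producing the clean indicator-like behavior required to collapse Theorem~\ref{thm:general} into Theorem~\ref{thm:refined}.
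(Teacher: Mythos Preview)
Your approach matches the paper's: invoke Theorem~\ref{thm:general}, apply Proposition~\ref{prop:ramified_local_integrals} at each nonarchimedean $v\in S$ with input constants chosen so that the factor $|\mathcal{I}_v|^2 C_v$ collapses, and absorb the remaining archimedean constants $C_{\RR^3,v}$ by rescaling the archimedean test vectors. Your final paragraph is unnecessary, since Proposition~\ref{prop:ramified_local_integrals} is already proved in \S\ref{s:ramified}; incidentally, that proof does not use Yang's vectors $f_{i_v}$ but merely any $f$ with $\beta_i(f)\neq 0$, paired with a compactly supported function on $\O_{\min}(F_v)$ via the embedding $C_c^\infty(\O_{\min}(F_v))\hookrightarrow\Omega_{Z(F_v)}$.
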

\begin{proof}
For all $v$ in $S$ and cubic \'etale $F_v$-algebras $E_v$, write $C_{E_v, v}$ for the resulting nonzero constants from Theorem \ref{thm:general} (which depend on $\mu$). For archimedean $v$ in $S$, replace $\varphi_{0,v}$ or $f_{0,v}$ with its $C_{\RR^3,v}^{-1}$-multiple. For nonarchimedean $v$ in $S$, apply 
 Proposition \ref{prop:ramified_local_integrals} below with these constants $C_{E_v,v}$ to obtain elements $\varphi_v$ of $\Omega_v$ and $f_v$ of $\sigma_v^{\epsilon_v}$. Finally, the result follows from applying Theorem \ref{thm:general} to our resulting $\varphi$ and $f$.
\end{proof}
\begin{remark}
If there exists a positive integer $n$ such that $n=n_v$ for all archimedean places $v$ of $F$, and if $\mathcal E$ lies in $\mathbb X(\O_F)$ with associated cubic $\O_F$-algebra $\O_{\mathcal E}$, then $$\prod_{v\mid\infty} q(\mathcal E_v)^{n_v - 1/2} = \Nm_{\O_F/\Z}(\Delta_{\mathcal O_{\mathcal E}/\O_F})^{n-1/2}.$$
\end{remark}

\section{Unramified test vectors and local integrals}\label{s:unramified}

In this section, assume that $F$ is a nonarchimedean local field and that $K/F$ is unramified. Then $\widetilde{G}$ and its various subvarieties that we consider have natural models over $\O_F$. For instance, recall $J(\O_F)$ from Example~\ref{exmp}. Also, assume that $\chi$ is unramified and that $\psi$ has conductor $0$.

Our goal is to prove Theorem \ref{prop:unramified_integral}, which shows that our local integral evaluates to 1 in the unramified, locally maximal setting. We begin in \S\ref{ss:sphericalvectorE6} by defining and studying our spherical vector $\varphi_0$ in $\Omega$. We calculate local Fourier coefficients of $\varphi_0$ in terms of certain subalgebras of $J(\O_F)$, and we analyze how these subalgebras behave in \S\ref{ss:conjugates}. Finally, in \S\ref{ss:unramifiedlocalintegral} we use these results to compute our local integral.

\subsection{Spherical vectors for $\widetilde{G}$}\label{ss:sphericalvectorE6}
For all $\mathcal X$ in $\O_{\min}(F)$, write $v(\mathcal X)$ for the unique integer $n$ such that $\mathcal X$ lies in $\varpi^n\widetilde\X(\O_F) - \varpi^{n+1}\widetilde\X(\O_F)$. 
\begin{lemma}\label{lem:integral_O_min_orbits}
    The sets $S_n \coloneqq \left\{\mathcal X\in \O_{\min}(F) \mid v(\mathcal X) = n\right\}$ are precisely the $\widetilde M(\O_F)$-orbits in $\O_{\min}(F)$. 
\end{lemma}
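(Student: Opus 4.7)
\emph{Plan.} The plan is to prove the two inclusions separately: that each $S_n$ is a union of $\widetilde{M}(\O_F)$-orbits, and that every $S_n$ is contained in a single orbit, which will be the orbit of $(\varpi^n, 0, 0, 0)$.

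\textbf{Step 1 (invariance of $v$).} Since $\widetilde{G}$ has a reductive model over $\O_F$ and the action map $\widetilde{M}\times\widetilde{\X}\to\widetilde{\X}$ is defined over $\O_F$, the subgroup $\widetilde{M}(\O_F)$ preserves the $\O_F$-lattice $\widetilde{\X}(\O_F)$. For $\widetilde{m}\in\widetilde{M}(\O_F)$ and $\mathcal{X}\in S_n$, we therefore have $\widetilde{m}\cdot\mathcal{X}\in\varpi^n\widetilde{\X}(\O_F)$; applying $\widetilde{m}^{-1}\in\widetilde{M}(\O_F)$ shows $\widetilde{m}\cdot\mathcal{X}\notin\varpi^{n+1}\widetilde{\X}(\O_F)$. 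Hence $v$ is $\widetilde{M}(\O_F)$-invariant, and each $S_n$ is a union of orbits.

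\textbf{Step 2 (reduction to $n=0$).} Inspecting the defining equations of $\O_{\min}$ from \cite[Proposition 8.1]{gan2021twisted}, namely $x^\#=ay$, $y^\#=dx$, and $l(x)\circ l^*(y)=ad$, one sees that they are homogeneous of degree two under the scalar action $(a,x,y,d)\mapsto(\lambda a, \lambda x, \lambda y, \lambda d)$. Thus $\O_{\min}(F)$ is stable under scalar multiplication, and multiplication by $\varpi^{-n}$ gives a $\widetilde{M}(\O_F)$-equivariant bijection $S_n\ra^\sim S_0$. It therefore suffices to show $S_0=\widetilde{M}(\O_F)\cdot(1,0,0,0)$.

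\textbf{Step 3 (the case $a\in\O_F^\times$).} Suppose $\mathcal{X}=(a,x,y,d)\in S_0$ with $a\in\O_F^\times$. Revisiting the formulas in the proof of Lemma \ref{lem:transitive_O_min}, we have $n(a^{-1}x)\cdot(a,0,0,0)=(a,x,a^{-1}x^\#,a^{-2}\det x)$, and the $\O_{\min}$ equations together with $x^{\#\#}=(\det x)x$ force $a^{-1}x^\#=y$ and $a^{-2}\det x=d$. Since $a\in\O_F^\times$ and $x\in J(\O_F)$, we have $a^{-1}x\in J(\O_F)$, so $n(a^{-1}x)$ lies in $\widetilde{M}(\O_F)$. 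Similarly $M(a^{-1},g_{a^{-1}})\in\widetilde{M}(\O_F)$ sends $(1,0,0,0)$ to $(a,0,0,0)$. Combining the two shows $\mathcal{X}\in\widetilde{M}(\O_F)\cdot(1,0,0,0)$.

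\textbf{Step 4 (reduction to the case $a\in\O_F^\times$).} For arbitrary $\mathcal{X}\in S_0$, the goal is to find $\widetilde{m}\in\widetilde{M}(\O_F)$ whose action on $\mathcal{X}$ produces a unit in the first coordinate. Reducing modulo $\varpi$ gives a nonzero element $\overline{\mathcal{X}}\in\O_{\min}(k)$, where $k$ denotes the residue field. By Lemma \ref{lem:transitive_O_min} applied over $\overline{k}$, the group $\widetilde{M}^1(\overline{k})$ acts transitively on $\O_{\min}(\overline{k})$; combined with the (geometric) connectedness of $\widetilde{M}^1$ and Lang's theorem, this descends to transitivity of $\widetilde{M}^1(k)$ on $\O_{\min}(k)$. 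Hence we can find $\overline{\widetilde{m}}\in\widetilde{M}^1(k)$ with $\overline{\widetilde{m}}\cdot\overline{\mathcal{X}}=(1,0,0,0)$, and by smoothness of $\widetilde{M}$ over $\O_F$ we can lift $\overline{\widetilde{m}}$ to $\widetilde{m}\in\widetilde{M}(\O_F)$. Then $\widetilde{m}\cdot\mathcal{X}$ has first coordinate congruent to $1$ modulo $\varpi$, and we conclude by Step 3.

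\textbf{Main obstacle.} The technical heart of the argument is Step 4, where one needs transitivity of $\widetilde{M}^1(k)$ on $\O_{\min}(k)$; invoking Lang's theorem requires identifying the stabilizer or confirming geometric connectedness of $\widetilde{M}^1$. An alternative, more elementary route is to use explicit $\widetilde{M}(\O_F)$-elements, including a Weyl-type involution swapping the roles of the $a$- and $d$-coordinates (and of $x$ and $y$), together with the $\O_F$-integral conjugation action of $\GL_3(\O_K)$ on $J(\O_F)$, to clear out the cases $a\in\varpi\O_F$ by hand using primitivity of $\mathcal{X}$.
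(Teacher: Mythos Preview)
Your proof is correct but takes a genuinely different route from the paper. The paper bypasses your Steps 3 and 4 entirely via the Iwasawa decomposition: the stabilizer $Q\subseteq\widetilde{M}$ of the \emph{line} through $(1,0,0,0)$ is a parabolic subgroup, so $\widetilde{M}(F)=\widetilde{M}(\O_F)Q(F)$. Writing $Q=S\times\lambda(\G_m)$, where $S$ is the point-stabilizer and $\lambda$ is the scaling cocharacter, gives $\widetilde{M}(F)=\widetilde{M}(\O_F)S(F)\lambda(F^\times)$. Combined with Lemma~\ref{lem:transitive_O_min} (over $F$ itself), this shows every $\mathcal{X}\in\O_{\min}(F)$ lies in $\widetilde{M}(\O_F)\cdot(t,0,0,0)$ for some $t\in F^\times$; then $v(\mathcal{X})=v(t)$ finishes. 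This is cleaner because it stays in characteristic $0$ and avoids both issues you flag in Step~4 (extending Lemma~\ref{lem:transitive_O_min} to positive characteristic, and verifying connectedness of the stabilizer for Lang's theorem). Your reduction-mod-$\varpi$ argument, on the other hand, is more hands-on and would adapt to settings where a parabolic stabilizer is not available; the obstacles you identify are genuine but surmountable (the stabilizer $S$ is a quotient of the connected parabolic $Q$ by $\G_m$, hence connected, and the proof of Lemma~\ref{lem:transitive_O_min} uses only the explicit elements $n(Z)$, $M(t,g_t)$ and unirationality of $\widetilde{M}^1$, none of which require characteristic $0$).
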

\begin{proof}
    We follow the discussion in \cite[\S3.4]{kazhdan2004minimal}, with some simplifications. Since $\widetilde{M}$ acts linearly on $\widetilde\X$, we see that $\widetilde M(\O_F)$ preserves $S_n$. To show that $S_n$ is a single orbit, write $Q \subseteq \widetilde M$ for the stabilizer of the line spanned by $(1,0,0,0)$ in $\widetilde\X(F)$. Write $S\subseteq Q$ for the stabilizer of $(1,0,0,0)$, write $\lambda:\G_m\hookrightarrow\widetilde{M}$ for the cocharacter given by scaling on $\widetilde{\X}$, and note that $Q=S\times\lambda(\G_m)$. Because $Q$ is a parabolic subgroup of $\widetilde{M}$, the Iwasawa decomposition yields $\widetilde{M}(F)=\widetilde{M}(\O_F)S(F)\lambda(F^\times)$.

Let $\mathcal{X}$ be in $S_n$. Then Lemma \ref{lem:transitive_O_min} and the above decomposition imply that there exists $t$ in $F^\times$ such that~$\mathcal{X}$ lies in the $\widetilde{M}(\O_F)$-orbit of $\lambda(t)\cdot(1,0,0,0)=(t,0,0,0)$. Since $\widetilde{M}(\O_F)$ stabilizes $S_n$, this indicates that $v(t)=n$. Therefore $\mathcal{X}$ lies in the $\widetilde{M}(\O_F)$-orbit of $(\varpi^n,0,0,0)$, i.e. $S_n$ is indeed a single orbit.
\end{proof}

\begin{lemma}\label{lem:spherical_coinvariants_for_normalization}
    For any nonzero $\widetilde{G}(\O_F)$-fixed element $\varphi$ of $\Omega$, the image of $\varphi$ in $\Omega_{\widetilde N(F), \psi_{(1, 0, 0,0)}}$ is nonzero.
\end{lemma}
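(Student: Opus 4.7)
The plan is to reduce the lemma to showing $\alpha_{(1,0,0,0)}(\varphi) \neq 0$ and then to extract enough information from the spherical nature of $\varphi$ to pin this value down. For the reduction, observe that the character $\psi_{(1,0,0,0)}$ of $\widetilde N(F)$ is trivial on $Z(F)$—because the pairing \eqref{eq:our_symplectic_form} factors through $\widetilde{\X}\times(\widetilde N/Z)$—but nontrivial on $\widetilde N(F)/Z(F)$. Consequently, taking $(\widetilde N(F),\psi_{(1,0,0,0)})$-coinvariants of the short exact sequence of \S\ref{sss:nonarchE6} kills the quotient term $\Omega_{\widetilde N(F)}$ (which carries trivial $\widetilde N$-action) and yields the isomorphism \eqref{eqn:twisted_coinvariants_min_orbit}. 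Under the identification of the target with $\C$ by evaluation at $(1,0,0,0)$, the image of $\varphi$ becomes precisely $\alpha_{(1,0,0,0)}(\varphi)$ times a nonzero generator.

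Next I organize the data. Define $g_\varphi\colon\O_{\min}(F)\to\C$ by $g_\varphi(\mathcal X)=\alpha_\mathcal{X}(\varphi)$. Applying Lemma \ref{lem:alpha_equivariance} to $\widetilde h\in\widetilde M(\O_F)$, which fix $\varphi$ and satisfy $|\nu(\widetilde h)|=1$, shows that $g_\varphi$ is $\widetilde M(\O_F)$-invariant, so by Lemma \ref{lem:integral_O_min_orbits} it factors as $g_\varphi=G\circ v$ for some $G\colon\Z\to\C$. The matrix coefficient $F_\varphi(g)\coloneqq\alpha_{(1,0,0,0)}(g\varphi)$ satisfies $F_\varphi(nmk)=\psi_{(1,0,0,0)}(n)\,|\nu(m)|^{1/5}\,G(v(m^{-1}\cdot(1,0,0,0)))$ in Iwasawa coordinates $g=nmk$ (using Lemma \ref{lem:alpha_equivariance} again). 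Hence $G\equiv0$ would force $F_\varphi\equiv0$; but $\alpha_{(1,0,0,0)}$ is a nonzero functional and $\widetilde G(F)\cdot\varphi$ spans $\Omega$ by irreducibility, a contradiction. Therefore $G(n_0)\neq0$ for some $n_0\in\Z$ (noting that $v$ surjects onto $\Z$).

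The main obstacle is to upgrade $G\not\equiv0$ to $G(0)\neq0$. My plan is to exploit the spherical Hecke algebra $\mathcal H\coloneqq\mathcal H(\widetilde G(F),\widetilde G(\O_F))$: for every $T\in\mathcal H$, $T\varphi=\lambda_T\varphi$, so $\alpha_{(1,0,0,0)}(T\varphi)=\lambda_T\,G(0)$, while unfolding $T\varphi=\int T(g)(g\varphi)\,dg$ via Iwasawa expresses the left-hand side as $\sum_n c_n(T)\,G(n)$, with $c_n(T)$ an orbital integral of $T$ supported on Iwasawa pieces with $v(m^{-1}\cdot(1,0,0,0))=n$. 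As $T$ ranges over $\mathcal H$, these identities—combined with the explicit Satake parameters of the minimal representation $\Omega$ of quasi-split $\mathsf E_6$—should pin down $G$ up to a scalar and, in conjunction with its nontriviality, force $G(0)\neq0$. A cleaner alternative is to appeal to an explicit Schr\"odinger-type realization of $\Omega$ in the style of Kazhdan--Savin or Magaard--Savin, in which both the spherical vector and the functional $\alpha_{(1,0,0,0)}$ admit concrete formulas, reducing the nonvanishing $\alpha_{(1,0,0,0)}(\varphi)\neq0$ to an elementary direct computation.
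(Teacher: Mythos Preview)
Your reduction is sound: the image of $\varphi$ in $\Omega_{\widetilde N(F),\psi_{(1,0,0,0)}}$ is indeed $\alpha_{(1,0,0,0)}(\varphi)$, and your argument that $G\not\equiv0$ via irreducibility and Iwasawa is correct. But the essential content of the lemma is precisely the step you do not carry out: upgrading $G\not\equiv0$ to $G(0)\neq0$. Your Hecke-algebra sketch produces relations of the form $\lambda_T\,G(0)=\sum_n c_n(T)G(n)$, but if $G(0)=0$ these only say $\sum_n c_n(T)G(n)=0$, and you give no reason why this family of relations forces $G\equiv0$. Making that work would require computing the $c_n(T)$ and the Satake parameter of $\Omega$ explicitly enough to solve the recursion---essentially as much input as the direct approach. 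As it stands, the proof is incomplete at exactly the point where the difficulty lies.

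The paper takes your ``cleaner alternative'' and actually executes it. Using the Kazhdan--Polishchuk/Rumelhart Schr\"odinger model of $\Omega$ on $L^2(F^\times\times F\times J)$, the functionals $\alpha_{\mathcal X}$ (for $\mathcal X$ with nonzero first coordinate) are identified with explicit point-evaluations of $A\varphi$, and the $\widetilde G(\O_F)$-invariance of $\varphi$ translates into concrete support and invariance constraints coming from the formulas for $z(t)$, $\widetilde n(-)$, and the Weyl elements $A,S$. The paper then argues by induction on $n=v(y)$: assuming $\alpha_{(1,0,0,0)}(\varphi)=0$, the orbit structure (your Lemma~\ref{lem:integral_O_min_orbits}) propagates vanishing across each level $S_n$, and the invariance under $A$ and $S$ pushes the vanishing from level $n$ to level $n+1$, yielding $\varphi=0$. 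This is a genuinely different mechanism from a Hecke recursion---it uses the Weyl elements rather than double-coset operators---and it avoids any eigenvalue computation. If you want to salvage your approach, you would need to either supply the missing Hecke computation in full or, more realistically, carry out the model-based argument you allude to at the end.
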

\begin{proof}
We will use the model for $\Omega$ from \cite{kazhdan2004minimal} in the split case (for the non-split case, see \cite{rumelhart1997minimal}). In this model, there is a certain continuous representation of $\widetilde{G}(F)$ on $L^2(F^\times\times F\times J)$, and $\Omega$ is its subspace of smooth vectors. Moreover, using a pinning of $\widetilde{G}$ over $\O_F$, we get an isomorphism $z:\G_a\ra^\sim Z$ of groups over $\O_F$ and a section $\widetilde{n}:\widetilde{N}/Z\hookrightarrow\widetilde{N}$ over $\O_F$ of the quotient morphism such that, for all $f$ in $\Omega$, we have
\begin{align}
        (z(t) f)(y, x_0, x) &= \psi(ty) \label{eq:first_Omega_formula}f(y, x_0, x),\\
        (\widetilde{n}(a, b, 0, 0)f)(y, x_0, x) &= f(y, x_0 + ay, x + by), \label{eq:second_Omega_formula}\\
        (\widetilde{n}(0, 0, c, d)f)(y, x_0, x) &= \psi(-\tr(c\circ x) + dx_0) f(y, x_0, x)\label{eq:third_Omega_formula}
    \end{align}
\cite[p.~5]{kazhdan2004minimal}.\footnote{The formulas in \cite[Proposition 41]{rumelhart1997minimal} look slightly different and in particular omit the minus sign in (\ref{eq:third_Omega_formula}). This arises from us using a different identification between $\widetilde N /Z$ and $F \oplus J \oplus J \oplus F$ than the one in \cite{rumelhart1997minimal}; compare \cite[Proposition 10]{rumelhart1997minimal} with (\ref{eq:our_symplectic_form}).}
Also, for a certain pair of Weyl elements $A$ and $S$ in $\widetilde G(\O_F)$, we have
\begin{align}\label{eq:fourth_Omega_formula}
    (Sf)(y, x_0, x) &= \int_{F\times J} f(y, x_0', x') \psi\Big(\frac{x_0' x_0 + \tr(x'\circ x)}{y}\Big) |y|^{-5} \dd{x_0'} \dd{x'}, \\
    \label{eq:fifth_Omega_formula}
    (Af)(y, x_0, x) &= \psi(-\det(x)/(x_0 y)) f(-x_0, y, x),
\end{align}
where (\ref{eq:fifth_Omega_formula}) only holds for $x_0\neq0$ \cite[p.~5]{kazhdan2004minimal}.

For all $\mathcal X = (a, x, x^\#/ a, \det (x)/a^2)$ in $\O_{\min}(F)$ with $a \neq 0$, one can use (\ref{eq:third_Omega_formula}) to show that the functional
\begin{equation}
\label{eq:sixth_Omega_formula}
    f\mapsto\lim_{x_0\to 0} \psi(-\det(x)/(x_0a)) f(-x_0, a, x) =  (A f)(a, 0, x)
\end{equation}
yields a nonzero element of $\Hom_{\widetilde{N}(F)}(\Omega,\psi_{\mathcal{X}})$. In particular, $\alpha_{\mathcal{X}}$ is a $\C^\times$-multiple of (\ref{eq:sixth_Omega_formula}).

Let $\varphi$ be a $\widetilde{G}(\O_F)$-fixed element of $\Omega$. Because $\psi$ has conductor $0$, (\ref{eq:first_Omega_formula}) and (\ref{eq:third_Omega_formula}) imply that $\varphi$ is supported on $(\O_F-\{0\})\times\O_F\times J(\O_F)$. If $\alpha_{(1,0,0,0)} (\varphi) = 0$, then we will show by induction on non-negative integers $n$ that $\varphi(y,x_0,x)=0$ whenever $v(y)=n$. As $n$ varies, this will imply that $\varphi=0$, which completes the proof.

First, consider the base case $n=0$. 
Because $v(1,0,0,0)=0$ and $\alpha_{(1,0,0,0)}(\varphi)=0$, Lemma \ref{lem:integral_O_min_orbits} and Lemma \ref{lem:alpha_equivariance} indicate that $\alpha_{\mathcal{X}}(\varphi)=0$ for all $\mathcal{X}$ with $v(\mathcal{X})=0$. For all $y$ in $\O^\times_F$, (\ref{eq:second_Omega_formula}) implies that $(x_0,x)\mapsto\varphi(y,x_0,x)$ is invariant under translation by $\O_F\times J(\O_F)$, so it suffices to check that $\varphi(y,0,0)=0$. Since $A\varphi=\varphi$, (\ref{eq:sixth_Omega_formula}) shows that this is equivalent to $\alpha_{(y,0,0,0)}(\varphi)=0$, and the latter follows from $v(y,0,0,0)=0$.

Next, consider the inductive step. Let $y$ be in $\varpi^{n+1}\O_F^\times$, and by induction assume that $\varphi(y',x_0,x)=0$ whenever $v(y')\leq n$. Since $A\varphi=\varphi$, (\ref{eq:sixth_Omega_formula}) shows that $\alpha_{(y',0,0,0)}(\varphi)=0$. Hence Lemma \ref{lem:integral_O_min_orbits} and Lemma \ref{lem:alpha_equivariance} indicate that $\alpha_{\mathcal{X}}(\varphi)=0$ for all $\mathcal{X}$ with $v(\mathcal{X})\leq n$. Applying $A\varphi=\varphi$ and (\ref{eq:sixth_Omega_formula}) again yields $\varphi(y,0,x)=0$ for all $x$ in $J(F)-\varpi^{n+1}J(\O_F)$. Applying $A\varphi=\varphi$, (\ref{eq:fifth_Omega_formula}), and the induction hypothesis indicates that $\varphi(y,x_0,x)=0$ whenever $v(x_0)\leq n$, so altogether $(x_0,x)\mapsto\varphi(y,x_0,x)$ is supported on $(\varpi^{n+1}\O_F)\times(\varpi^{n+1}J(\O_F))$. Therefore (\ref{eq:fourth_Omega_formula}) implies that $(x_0,x)\mapsto(S\varphi)(y,x_0,x)$ is invariant under translation by $\O_F\times J(\O_F)$. Finally, $S\varphi=\varphi$, so we see that $\varphi(y,x_0,x)=0$ for all $(x_0,x)$ in $\O_F\times J(\O_F)$, as desired.
\end{proof}

Write $\varphi_0$ for the unique $\widetilde{G}(\O_F)$-fixed element of $\Omega$ satisfying $\alpha_{(1,0,0,0)}(\varphi_0)=1$, which exists by Lemma \ref{lem:spherical_coinvariants_for_normalization}.

\begin{corollary}\label{cor:dumbcorsphericalvanishing}
      For all $\mathcal X$ in $\O_{\min}(F)$, we have $\alpha_{\mathcal X}(\varphi_0) = 0$ if $v(\mathcal X) < 0$ and $\alpha_{\mathcal X}(\varphi_0) = 1$ if $v(\mathcal X) = 0$. 
\end{corollary}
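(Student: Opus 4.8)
The plan is to reduce the computation to a single representative in each $\widetilde M(\O_F)$-orbit on $\O_{\min}(F)$ and then read off the answer from two facts already established. First I would observe that $\varphi_0$, being $\widetilde G(\O_F)$-fixed, is fixed by $\widetilde M(\O_F)$, and that $\nu$ carries $\widetilde M(\O_F)$ into $\O_F^\times$, so that $|\nu(\widetilde h)| = 1$ for $\widetilde h$ in $\widetilde M(\O_F)$. Lemma \ref{lem:alpha_equivariance} then gives
\begin{align*}
\alpha_{\mathcal X}(\varphi_0) = |\nu(\widetilde h)|^{-1/5}\alpha_{\widetilde h\cdot\mathcal X}(\widetilde h\cdot\varphi_0) = \alpha_{\widetilde h\cdot\mathcal X}(\varphi_0)
\end{align*}
for all $\widetilde h$ in $\widetilde M(\O_F)$, so by Lemma \ref{lem:integral_O_min_orbits} the quantity $\alpha_{\mathcal X}(\varphi_0)$ depends only on $v(\mathcal X)$. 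Hence it suffices to evaluate it at one representative of $S_n$ when $n = 0$ and when $n < 0$.

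For $n = 0$ I would take $\mathcal X = (1,0,0,0)$, for which $\alpha_{(1,0,0,0)}(\varphi_0) = 1$ by the normalization defining $\varphi_0$. For $n < 0$ I would take $\mathcal X = (\varpi^n,0,0,0)$, which lies in $\O_{\min}(F)$ with $v(\mathcal X) = n$. Here I would invoke the explicit model for $\Omega$ used in the proof of Lemma \ref{lem:spherical_coinvariants_for_normalization}: by \eqref{eq:sixth_Omega_formula}, with $x = 0$ so the $\psi$-factor is trivial, the functional $\alpha_{(\varpi^n,0,0,0)}$ is a nonzero scalar multiple of $f\mapsto\lim_{x_0\to 0}f(-x_0,\varpi^n,0)$. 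But that same proof shows that any $\widetilde G(\O_F)$-fixed vector, in particular $\varphi_0$, is supported on $(\O_F-\{0\})\times\O_F\times J(\O_F)$, and $\varpi^n$ does not lie in $\O_F$; hence $\varphi_0(-x_0,\varpi^n,0) = 0$ for all $x_0$, so $\alpha_{(\varpi^n,0,0,0)}(\varphi_0) = 0$.

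The argument is short once the orbit lemma, the $\alpha$-equivariance lemma, and the model are in hand, so I do not expect a genuine obstacle. The only point that needs a moment's attention is that the similitude factor $|\nu(\widetilde h)|^{-1/5}$ in Lemma \ref{lem:alpha_equivariance} trivializes over $\widetilde M(\O_F)$, which is precisely what converts $\widetilde M(\O_F)$-invariance of $\varphi_0$ into $v(\mathcal X)$-invariance of $\alpha_{\mathcal X}(\varphi_0)$; after that, the two cases follow from the normalization of $\varphi_0$ and from the integrality of the support of a spherical vector in the Kazhdan--Polishchuk model, respectively.
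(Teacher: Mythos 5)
Your proof is correct, and the $v(\mathcal X)=0$ case is handled exactly as the paper does: reduce to $(1,0,0,0)$ via Lemma~\ref{lem:integral_O_min_orbits} and Lemma~\ref{lem:alpha_equivariance}, then use the normalization of $\varphi_0$. (Your remark that $|\nu(\widetilde h)|=1$ for $\widetilde h\in\widetilde M(\O_F)$ is exactly the point needed.)

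For the $v(\mathcal X)<0$ case you take a more laborious route than the paper. The paper's argument is entirely formal and does not revisit the Kazhdan--Polishchuk model: if $v(\mathcal X)<0$, then $\mathcal X\notin\widetilde\X(\O_F)$, and since the pairing on integral points lands in $\O_F$ and is perfect while $\psi$ has conductor $0$, the character $\psi_{\mathcal X}$ is already nontrivial on $\widetilde N(\O_F)$. Combined with the $(\widetilde N(F),\psi_{\mathcal X})$-equivariance of $\alpha_{\mathcal X}$ and the $\widetilde N(\O_F)$-fixedness of $\varphi_0$, this gives $\alpha_{\mathcal X}(\varphi_0)=\psi_{\mathcal X}(\widetilde n)\,\alpha_{\mathcal X}(\varphi_0)$ for some $\widetilde n\in\widetilde N(\O_F)$ with $\psi_{\mathcal X}(\widetilde n)\ne1$, forcing $\alpha_{\mathcal X}(\varphi_0)=0$. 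Your argument---reduce by Lemma~\ref{lem:integral_O_min_orbits} to $\mathcal X=(\varpi^n,0,0,0)$, then use \eqref{eq:sixth_Omega_formula}, $A\varphi_0=\varphi_0$, and the support observation for spherical vectors from the proof of Lemma~\ref{lem:spherical_coinvariants_for_normalization}---also works, but it re-imports the explicit model (and in particular the claim about the support) where the softer character-equivariance observation suffices. The paper's version also sidesteps the need to invoke the orbit lemma for negative $n$ at all.
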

\begin{proof}
    If $v(\mathcal X) < 0$, then $\psi_{\mathcal X} :\widetilde{N}(F)\to \C^1$ is nontrivial on $\widetilde{N}(\O_F)$ because $\psi$ has conductor $0$. In particular, the $\widetilde{N}(\O_F)$-invariance of $\varphi_0$ implies that $\alpha_{\mathcal X}(\varphi_0) = 0$.

    If $v(\mathcal X) = 0$, the claim follows from the $\widetilde G(\O_F)$-invariance of $\varphi_0$ along with Lemma \ref{lem:alpha_equivariance} and Lemma \ref{lem:integral_O_min_orbits}.
\end{proof}

\begin{corollary}\label{cor:calculate_spherical_easy}
Let $\mathcal{E}=(a,b,c,d)$ be an element of $\X(\O_F)$ such that the associated cubic $\O_F$-algebra $\O_{\mathcal{E}}$ is the ring of integers of a cubic \'etale $F$-algebra. Then for all $\mathcal{X}=(a,x,y,d)$ in $\O_{\min}(F)\cap p^{-1}(\mathcal{E})$, we have 
$$\alpha_{\mathcal X} (\varphi_0) = \begin{cases}
        1  & \mbox{when }x\mbox{ and }y\mbox{ lie in }J(\O_F), \\
        0 & \text{otherwise}.
    \end{cases}$$
\end{corollary}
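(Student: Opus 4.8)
The plan is to read off the statement from Corollary~\ref{cor:dumbcorsphericalvanishing} by computing the invariant $v(\mathcal X)$ from \S\ref{ss:sphericalvectorE6}. Recall that $\widetilde\X(\O_F)=\O_F\times J(\O_F)\times J(\O_F)\times\O_F$ (with $J(\O_F)$ as in Example~\ref{exmp}), so $v(\mathcal X)$ is the largest integer $n$ with $a,d\in\varpi^n\O_F$ and $x,y\in\varpi^nJ(\O_F)$; since every element of $\O_{\min}(F)$ is nonzero, this is a well-defined integer. Because $p(\mathcal X)=(a,\tr(x),\tr(y),d)=\mathcal E$ lies in $\X(\O_F)$, we have $a,d\in\O_F$; hence $v(\mathcal X)\geq0$ precisely when $x,y\in J(\O_F)$, and $v(\mathcal X)<0$ precisely when $x\notin J(\O_F)$ or $y\notin J(\O_F)$.

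If $x\notin J(\O_F)$ or $y\notin J(\O_F)$, then $v(\mathcal X)<0$, so $\alpha_{\mathcal X}(\varphi_0)=0$ by Corollary~\ref{cor:dumbcorsphericalvanishing}, which is the second case of the claim. So assume $x,y\in J(\O_F)$, giving $v(\mathcal X)\geq0$; the plan is to show that in fact $v(\mathcal X)=0$, after which Corollary~\ref{cor:dumbcorsphericalvanishing} gives $\alpha_{\mathcal X}(\varphi_0)=1$.

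Suppose toward a contradiction that $v(\mathcal X)\geq1$, i.e.\ $a,d\in\varpi\O_F$ and $x,y\in\varpi J(\O_F)$; then $\tr(x),\tr(y)\in\varpi\O_F$, so $\mathcal E=\varpi\mathcal E'$ for some $\mathcal E'=(a',b',c',d')\in\X(\O_F)$. Let $(\alpha',\beta')$ be a good basis of $\O_{\mathcal E'}$, satisfying~\eqref{eq:goodbasis} for $(a',b',c',d')$. A direct substitution into~\eqref{eq:goodbasis} shows that $(\varpi\alpha',\varpi\beta')$ satisfies~\eqref{eq:goodbasis} for $\mathcal E=(\varpi a',\varpi b',\varpi c',\varpi d')$ and that $(\varpi\alpha')(\varpi\beta')=-\varpi^2a'd'\in\O_F$, so by the correspondence~\eqref{eqn:cubic_R-algebras} it is a good basis identifying $\O_{\mathcal E}$ with the strictly smaller $\O_F$-subalgebra $\O_F\cdot1+\O_F\cdot\varpi\alpha'+\O_F\cdot\varpi\beta'$ of $\O_{\mathcal E'}$. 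Tensoring the inclusion $\O_{\mathcal E}\hookrightarrow\O_{\mathcal E'}$ with $F$ gives an isomorphism $\O_{\mathcal E}\otimes_{\O_F}F\cong\O_{\mathcal E'}\otimes_{\O_F}F$, which is $E$ by hypothesis; thus $\O_{\mathcal E'}$ is an order of $E$, and it properly contains $\O_{\mathcal E}$. Since $\O_{\mathcal E}=\O_E$ is the maximal order of $E$, this is a contradiction, so $v(\mathcal X)=0$. The only nonformal ingredient here is the scaling identity for~\eqref{eq:goodbasis}, which is a routine substitution; conceptually, the point is that divisibility of $\mathcal E$ by $\varpi$ in $\X(\O_F)$ forces $\O_{\mathcal E}$ to be non-maximal. (Alternatively, one could use that $q(\mathcal E)=\varpi^4q(\mathcal E')$ since $q$ is quartic, together with the fact that $q$ represents the discriminant and the discriminant–index formula for orders of $E$.)
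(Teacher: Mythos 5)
Your proposal is correct, and it takes a genuinely different route from the paper's. The paper also reduces to showing $v(\mathcal X)=0$ when $x,y\in J(\O_F)$, but handles the contradiction differently: if $\mathcal E\equiv 0\pmod\varpi$, then $\O_{\mathcal E}/\varpi\cong(\O_F/\varpi)[\alpha,\beta]/(\alpha^2,\beta^2,\alpha\beta)$ by \cite[p.~115]{GGS02}, which forces $\O_{\mathcal E}$ to be both ramified and not monogenic, and the paper then invokes the classification of non-monogenic maximal orders (only the split case over $\mathbb F_2$, which is unramified). Your argument instead observes that $\mathcal E=\varpi\mathcal E'$ and that the good-basis relations~\eqref{eq:goodbasis} scale compatibly, exhibiting $\O_{\mathcal E}$ as the index-$\varpi^2$ suborder $\O_F+\varpi\alpha'\O_F+\varpi\beta'\O_F$ inside $\O_{\mathcal E'}$; since $\O_{\mathcal E'}$ is then an order of $E$ properly containing $\O_{\mathcal E}=\O_E$, this contradicts maximality directly. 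Your approach is more elementary and self-contained (no monogenicity classification, no citation to the residue-ring computation in \cite{GGS02}), while the paper's is perhaps more in the spirit of the Delone--Faddeev/GGS dictionary it is already using. Your parenthetical discriminant-index variant is also valid: $q(\mathcal E)=\varpi^4q(\mathcal E')$ shows $\disc(\O_{\mathcal E})$ is $\varpi^4$ times the discriminant of a larger order, again contradicting maximality. Both of your arguments are sound; this is a clean alternative proof.
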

\begin{proof}
If $x$ and $y$ do not both lie in $J(\O_F)$, then 
$v(\mathcal X) < 0$, so the claim follows from Corollary \ref{cor:dumbcorsphericalvanishing}. 

Now assume that $x$ and $y$ lie in $J(\O_F)$. By Corollary \ref{cor:dumbcorsphericalvanishing}, it suffices to show that $v(\mathcal{X})=0$, i.e. that $(a,x,y,d)\not\equiv 0\pmod{\varpi}$. If we had $(a,x,y,d)\equiv0\pmod{\varpi}$, then $(a,b,c,d)\equiv0\pmod{\varpi}$, so that $\O_{\mathcal{E}}/\varpi$ is isomorphic to $(\O_F/\varpi)[\alpha,\beta]/(\alpha^2,\beta^2,\alpha\beta)$ by \cite[p.~115]{GGS02}. In particular, the $\O_F$-algebra $\O_{\mathcal{E}}$ is ramified and not monogenic. But $\O_{\mathcal{E}}$ is the ring of integers of a cubic \'etale $F$-algebra, so $\O_{\mathcal{E}}$ is only not monogenic when $\O_F/\varpi=\mathbb{F}_2$ and $\O_{\mathcal{E}}$ is split over $\O_F$. But this $\O_{\mathcal{E}}$ is unramified, so altogether we must have $v(\mathcal{X})=0$.
\end{proof}

\subsection{Conjugates of algebra embeddings}\label{ss:conjugates}
Let $i:E\hookrightarrow J$ be an $F$-algebra embedding such that $i(\O_E)$ lies in $J(\O_F)$. To apply Corollary \ref{cor:calculate_spherical_easy}, we will need a criterion for when conjugates of $i(\O_E)$ remain in $J(\O_F)$. First, we tackle the case when $K=F\times F$.
\begin{lemma}\label{lem:double_coset}
If $g'$ in $\GL_3(F)$ satisfies $g'^{-1}i(\O_E) g'\subseteq\operatorname{M}_3(\O_F)$, then $g'$ lies in $i(E^\times)\GL_3(\O_F)$. 
\end{lemma}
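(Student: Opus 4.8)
The plan is to reduce to a statement about $\O_E$-lattices in $E^3 = F^3 \otimes_F E$, viewed through the $F$-algebra embedding $i : E \hookrightarrow \operatorname{M}_3(F)$. Concretely, $i$ makes $F^3$ into an $E$-module; since $\dim_F F^3 = 3 = \dim_F E$, the $E$-module $F^3$ is free of rank $1$, say $F^3 \cong E$ as $E$-modules via some $F$-linear isomorphism. Under this identification, $\operatorname{M}_3(F) = \End_F(F^3)$ contains $i(E)$ as the subalgebra of $E$-linear endomorphisms, and the commutant of $i(E)$ in $\operatorname{M}_3(F)$ is exactly $i(E)$ itself (as $E$ is commutative and $F^3$ is free of rank $1$ over $E$). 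The lattice $\Lambda_0 := $ (the preimage of) $\O_F^3 \subseteq F^3$ is an $\O_E$-lattice in $E$ precisely because $i(\O_E) \subseteq \operatorname{M}_3(\O_F) = \End_{\O_F}(\Lambda_0)$ stabilizes it; and since $\O_E$ is a product of Dedekind domains (here even a product of DVRs, as $F_p$ is a $p$-adic field and $E/F$ is \'etale), any such $\O_E$-lattice is a fractional $\O_E$-ideal, hence of the form $\lambda \O_E$ for some $\lambda \in E^\times$ — here one uses that $\O_E$ has trivial Picard group, i.e. is a PID (a finite product of local PIDs).

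Now I would run the same argument for the lattice $g' \cdot \Lambda_0 =: \Lambda_1$. The hypothesis $g'^{-1} i(\O_E) g' \subseteq \operatorname{M}_3(\O_F)$ says exactly that $i(\O_E)$ stabilizes $\Lambda_1 = g'\Lambda_0$; hence $\Lambda_1$ is again an $\O_E$-lattice in $F^3 \cong E$, so $\Lambda_1 = \mu \O_E$ for some $\mu \in E^\times$. Therefore $\Lambda_1 = \mu\lambda^{-1} \cdot \Lambda_0$, where $\mu\lambda^{-1} \in E^\times$ acts via $i$. Writing $e := i(\mu\lambda^{-1}) \in i(E^\times) \subseteq \GL_3(F)$, we conclude that $g'\Lambda_0 = e\Lambda_0$, i.e. $e^{-1} g'$ stabilizes $\Lambda_0 = \O_F^3$. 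The stabilizer of $\O_F^3$ in $\GL_3(F)$ is $\GL_3(\O_F)$, so $e^{-1}g' \in \GL_3(\O_F)$, i.e. $g' \in i(E^\times)\GL_3(\O_F)$, as desired.

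The only genuine input is the fact that $\O_E$ is a PID in the relevant local setting: since $F = F_p$ is nonarchimedean, $K = F \times F$, and $E$ is a cubic \'etale $F$-algebra, $E$ is a product of unramified-or-ramified finite extensions of $F_p$, each with a DVR ring of integers, so $\O_E$ is a finite product of DVRs and in particular a PID; hence every $\O_E$-lattice in the rank-one free $E$-module $E$ is principal. I expect this to be entirely routine; the only point requiring a little care is checking that $i(E)$ is its own commutant in $\operatorname{M}_3(F)$, so that ``$i(\O_E)$ stabilizes $\Lambda$'' genuinely forces $\Lambda$ to be an $\O_E$-submodule of $F^3$ rather than merely a module over some larger order. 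This is immediate once one fixes the $E$-module isomorphism $F^3 \cong E$. (In the body of the paper this lemma is stated for $K = F\times F$, but the argument above in fact does not use anything about $K$; the $K = F \times F$ hypothesis is presumably there only because this is the case used downstream, the general nonarchimedean case being the ``Hermitian analogue'' treated separately.)
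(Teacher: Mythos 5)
Your proof is correct and, modulo presentation, is the paper's: both hinge on $\O_E$ being a finite product of DVRs, so that finitely generated torsion-free $\O_E$-modules of rank one are free, which you apply to the two lattices $\O_F^3$ and $g'\O_F^3$ inside $E\cong F^3$, while the paper applies it to the two $\O_E$-module structures on $\O_F^3$ (via $i$ and $\ad(g'^{-1})\circ i$) to produce an intertwiner $k\in\GL_3(\O_F)$ and then closes by invoking that $i(E^\times)$ is its own centralizer in $\GL_3(F)$. One small remark: the concern you raise about $i(E)$ being its own commutant is unnecessary in your version --- ``$i(\O_E)$ stabilizes $\Lambda$'' tautologically makes $\Lambda$ an $\O_E$-submodule (hence a fractional $\O_E$-ideal) with no commutant input required; your closing observation that the hypothesis on $K$ plays no role in this lemma is also correct.
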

Our original proof of Lemma \ref{lem:double_coset} used tedious casework on the ramification behavior of $E/F$. We are very grateful to Aaron Pollack for explaining to us the following, much simpler proof.
\begin{proof}
The two $\O_F$-algebra embeddings $\O_E\hookrightarrow\operatorname{M}_3(\O_F)$ given by $i$ and $\ad{g'^{-1}}\circ i$ both endow $\O_F^{3}$ with an $\O_E$-module structure extending its $\O_F$-module structure. In both $\O_E$-module structures, $\O_F^{3}$ is evidently finitely generated and torsionfree, so it is finite free over $\O_E$. These two $\O_E$-module structures have the same rank over $\O_E$ and hence are isomorphic, so there exists an $\O_F$-linear isomorphism $k:\O_F^{3}\ra^\sim\O_F^{3}$ sending the action of $i$ to the action of $\ad{g'^{-1}}\circ i$. This implies that $g'k$ centralizes $i(E^\times)$. But $i(E^\times)$ is its own centralizer in $\GL_3(F)$, which yields the desired result.
\end{proof}
Next, we use Lemma \ref{lem:double_coset} to deduce the case when $K$ is a field.
\begin{prop}\label{prop:doublecoset}
If $g'$ in $\U_3(F)$ satisfies $g'^{-1}i(\O_E)g'\subseteq J(\O_F)$, then $g'$ lies in $i(L^1)\U_3(\O_F)$.
\end{prop}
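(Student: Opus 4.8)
The plan is to reduce to Lemma~\ref{lem:double_coset} by base changing along the unramified extension $\O_F\hookrightarrow\O_K$, and then to upgrade the conclusion from $L^\times$ to $L^1$ using the Hermitian structure. Recall that $L=E\otimes_F K$ and, since $K/F$ is unramified, $\O_L=\O_E\otimes_{\O_F}\O_K$. By \S\ref{ss:unitarysetup}, the embedding $i$ extends to a $K$-algebra embedding (respecting involutions) $i_K\colon L\hookrightarrow\operatorname{M}_3(K)$, and $i(L^1)$ is the image of $L^1\subseteq L^\times$ under $i_K$; moreover $i_K(\O_L)=i(\O_E)\cdot\O_K$. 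Since $i(\O_E)\subseteq J(\O_F)\subseteq\operatorname{M}_3(\O_K)$ and $\O_K$ is central, $i_K(\O_L)\subseteq\operatorname{M}_3(\O_K)$, and the hypothesis $g'^{-1}i(\O_E)g'\subseteq J(\O_F)$ gives $g'^{-1}i_K(\O_L)g'=(g'^{-1}i(\O_E)g')\cdot\O_K\subseteq\operatorname{M}_3(\O_K)$.

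First I would apply Lemma~\ref{lem:double_coset} with $(F,E,i)$ replaced by $(K,L,i_K)$: the proof given there is purely module-theoretic and uses only that $L$ is a cubic \'etale $K$-algebra and that $i_K\colon\O_L\hookrightarrow\operatorname{M}_3(\O_K)$ is an $\O_K$-algebra embedding, so it applies verbatim. This produces $\ell\in L^\times$ and $h\in\GL_3(\O_K)$ with $g'=i_K(\ell)h$. Next I would use that $g'$ is unitary. Writing $(-)^\dagger$ for conjugate-transpose and $\ell\mapsto\overline{\ell}$ for the $L/E$-involution, the fact that $i(E)$ consists of Hermitian matrices gives $i_K(\ell)^\dagger=i_K(\overline{\ell})$, so
\begin{align*}
I=g'^\dagger g'=h^\dagger\, i_K(\overline{\ell}\,\ell)\, h=h^\dagger\, i_K(\Nm_{L/E}\ell)\, h,
\end{align*}
hence $i_K(\Nm_{L/E}\ell)=(hh^\dagger)^{-1}\in\GL_3(\O_K)$. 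As $\Nm_{L/E}\ell\in E^\times$ and both $i(\Nm_{L/E}\ell)$ and its inverse are integral matrices over $\O_K$ (hence over $\O_F$), and $i_K$ is an algebra embedding, we conclude $\Nm_{L/E}\ell\in\O_E^\times$.

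Finally, since $K/F$ --- and therefore $L/E$ --- is unramified, the norm map $\O_L^\times\to\O_E^\times$ is surjective; choosing $m\in\O_L^\times$ with $\Nm_{L/E}m=\Nm_{L/E}\ell$, one gets $\ell_1\coloneqq\ell m^{-1}\in L^1$ and $g'=i_K(\ell_1)\cdot\bigl(i_K(m)h\bigr)$. Here $i_K(\ell_1)\in i(L^1)\subseteq\U_3(F)$ because $i_K(\ell_1)^\dagger i_K(\ell_1)=i_K(\Nm_{L/E}\ell_1)=I$; and $i_K(m)\in\GL_3(\O_K)$ because $m\in\O_L^\times$ forces $i_K(m),\,i_K(m)^{-1}=i_K(m^{-1})\in\operatorname{M}_3(\O_K)$, so that $i_K(m)h\in\GL_3(\O_K)$; while at the same time $i_K(m)h=i_K(\ell_1)^{-1}g'\in\U_3(F)$, whence $i_K(m)h\in\U_3(F)\cap\GL_3(\O_K)=\U_3(\O_F)$. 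This yields $g'\in i(L^1)\U_3(\O_F)$, as desired. The only points requiring genuine care are the verbatim applicability of Lemma~\ref{lem:double_coset} over $K$ (together with the identity $\O_L=\O_E\otimes_{\O_F}\O_K$) and the passage from $\ell$ to $\ell_1$, which rests on the surjectivity of $\Nm_{L/E}$ on unit groups in the unramified setting; everything else is formal.
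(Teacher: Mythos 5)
Your proof is correct and follows essentially the same route as the paper's: you apply Lemma~\ref{lem:double_coset} over $K$ to get $g'=i_K(\ell)h$ with $h\in\GL_3(\O_K)$, then use the unitary condition to show $\Nm_{L/E}\ell\in\O_E^\times$, and finally adjust $\ell$ by a unit to land in $L^1$. The only difference is cosmetic and occurs in the final step: the paper reduces to $E$ a field and does casework on whether $L$ is a field or $E\times E$, whereas you invoke surjectivity of $\Nm_{L/E}\colon\O_L^\times\to\O_E^\times$ in the unramified/split setting, which packages the same computation slightly more uniformly (and also handles $K=F\times F$ without splitting it off as a separate case, as the paper does).
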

\begin{proof}
    When $K = F\times F$, this is Lemma \ref{lem:double_coset}, so assume that $K$ is a field. If $g'^{-1}i(\O_E)g'\subseteq J(\O_F)$, then applying Lemma \ref{lem:double_coset} to $L/K$ shows that $g'=hk$ for some $h$ in $i(L^\times)$ and $k$ in $\GL_3(\O_K)$. Since $i(L^1)\subseteq\U_3(F)$ and $i(\O_L^\times)\subseteq\GL_3(\O_K)$, it suffices to prove that $h$ lies in $i(L^1\O_L^\times)$.

    We have $g'=\prescript{t}{}{\overline{g'}}^{-1}$, so $\prescript{t}{}{\overline{h}}h=\prescript{t}{}{\overline{k}}^{-1}k^{-1}$ lies in $i(L^\times)\cap\GL_3(\O_K)=i(\O_L^\times)$. Because $i:L\hookrightarrow\operatorname{M}_3(K)$ is an embedding of $K$-algebras with involution, it suffices to prove that, for all $h_0$ in $L^\times$, if $\overline{h_0}h_0$ lies in $\O_L^\times$, then $h_0$ lies in $L^1\O_L^\times$. By writing $E$ as a product of fields, we can assume that $E$ is a field. Finally,
    \begin{itemize}
        \item If $L$ is a field, then $L/E$ is an unramified quadratic extension. Therefore if $\overline{h_0}h_0$ lies in $\O_L^\times$, then $h_0$ lies in $\O_L^\times$.
        \item If $L=E\times E$, then $h_0=(h_1,h_2)$ for some $h_1$ and $h_2$ in $E^\times$. Hence if $\overline{h}_0h_0=(h_1h_2,h_1h_2)$ lies in $\O_L^\times=(\O_E^\times)^2$, then $h_0=(h_1,h_1^{-1})(1,h_1h_2)$ lies in $L^1\O_L^\times$.\qedhere
    \end{itemize}
    \end{proof}

\subsection{Unramified local integrals}\label{ss:unramifiedlocalintegral}
With Proposition \ref{prop:doublecoset} in hand, we are ready to finish calculating the unramified local integrals. Recall from \S\ref{ss:localPU3} that, under the unramified hypotheses of this section, when $K$ is a field we have $\chi^2=1$, so the sign $\epsilon$ equals $+1$. Recall from \S\ref{ss:G'localvector} the element $\lambda_0$ of $E^\times/\Nm_{L/E}(L^\times)$.

\begin{lemma}\label{lem:choose_X_integrally}
There exists an isomorphism $K^3 \cong L_{\lambda_0}$ of Hermitian spaces for $K/F$ such that
 \begin{itemize}
     \item the image of $\O_K^3$ in $L_{\lambda_0}$ is $\O_L$-stable,
     \item the associated $F$-algebra embedding $i:E\hookrightarrow J$ satisfies $\Hom_{i(T_E)(F)} (\sigma^+, \mathbbm 1) \neq 0$.
 \end{itemize}
 In particular, $i(\O_E)$ lies in $J(\O_F)$.
 \end{lemma}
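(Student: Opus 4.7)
The plan is to reformulate the existence as a lattice-theoretic question and then verify a local parity condition. By Proposition \ref{prop:epsilon_and_i}, the embeddings $i : E \hookrightarrow J$ in the required $G'(F)$-orbit arise precisely from isomorphisms $K^3 \cong L_{\lambda_0}$ of Hermitian spaces for $K/F$; such an isomorphism exists, since $\epsilon = +1 = \epsilon(E,\chi,\psi)$ in the unramified setting. Pre-composition by $\U_3(F)$ acts transitively on such isomorphisms, and the image of $\O_K^3$ sweeps out every $\U_3(F)$-orbit of self-dual $\O_K$-lattices in $L_{\lambda_0}$; since $K/F$ is unramified, Jacobowitz's classification of Hermitian lattices in odd rank shows there is a unique such orbit. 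Hence the first bullet can be arranged if and only if $L_{\lambda_0}$ contains a fractional $\O_L$-ideal that is self-dual with respect to its Hermitian form.

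Since $K/F$ is unramified we have $\mathfrak{d}_{L/K} = \mathfrak{d}_{E/F}\cdot\O_L$, so for any representative $\lambda \in E^\times$ of $\lambda_0$ the dual of a fractional $\O_L$-ideal $\mathfrak{a}$ in $L_\lambda$ computes to $\lambda^{-1}\mathfrak{d}_{E/F}^{-1}\bar{\mathfrak{a}}^{-1}$; self-duality therefore amounts to the $\O_E$-ideal identity $\lambda\cdot\Nm_{L/E}(\mathfrak{a})\cdot\mathfrak{d}_{E/F} = \O_E$. The norm map on fractional ideals is surjective at places of $E$ split in $L$ and has image consisting of even powers at inert places; moreover these inert-place parities are unaffected by modifying $\lambda$ within its class modulo $\Nm_{L/E}(L^\times)$. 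Thus a self-dual $\mathfrak{a}$ (with suitable $\lambda$) exists if and only if $v_v(\lambda_0) + v_v(\mathfrak{d}_{E/F}) \equiv 0 \pmod 2$ at every place $v$ of $E$ inert in $L$. I would verify this parity directly from the definition of $\lambda_0$ in \S\ref{ss:G'localvector}, using the unramified hypotheses on $\chi$ and $K/F$ to evaluate the local epsilon factors $\epsilon(\tfrac12, \chi\circ\Nm_{L_w/K_v}, \psi(\tr_{L_w/F_v}(\delta-)))$ and comparing the result against the valuations of $\Delta_{E/F}$ and $\mathfrak{d}_{E/F}$; the known equality $\epsilon(E,\chi,\psi) = +1$ in this setting should be exactly what makes the parities align.

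The ``in particular'' conclusion is then immediate: under the resulting isomorphism identifying $\O_K^3$ with an $\O_L$-stable lattice $\mathfrak{a}$, the action of $\O_E \subseteq \O_L$ on $\mathfrak{a}$ by multiplication corresponds to $\O_K$-linear Hermitian endomorphisms of $\O_K^3$ (Hermitian because $\bar e = e$ for $e \in E$, combined with the formula $h(x,y) = \tr_{L/K}(\lambda x\bar y)$), so $i(\O_E) \subseteq J(\O_F)$. The main obstacle will be the parity computation in the second paragraph: while each individual local epsilon factor is straightforward to evaluate in the unramified setting, unpacking $\lambda_0$ as an element of $E^\times/\Nm_{L/E}(L^\times)$ and checking parities at every type of inert place (which can come from different ramification behaviors of $E/F$, even when $K/F$ is unramified) requires patient local bookkeeping.
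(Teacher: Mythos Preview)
Your reduction is correct and matches the paper's: both arguments boil down to showing that $L_{\lambda_0}$ contains an $\O_L$-lattice that is self-dual for the $K/F$-Hermitian form, after which the isomorphism can be chosen to carry $\O_K^3$ onto it. Your use of Jacobowitz to justify this last step is fine (the paper just asserts it).

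The difference is in how the self-dual lattice is actually produced. You set up a parity criterion at places of $E$ inert in $L$ and propose to verify it from the definition of $\lambda_0$; the paper instead does short explicit casework on the shape of $E$ (namely $E=F^3$, $E$ a field, $E=F\times F'$), computes a concrete representative of $\lambda_0$ in each case using the unramified hypotheses, and writes down the lattice by hand (e.g.\ $M=\O_L$ when $E$ is a field, with $\lambda_0$ represented by $\varpi_E^{-d(E/F)}$). This is both shorter and more transparent than the bookkeeping you anticipate. One caution: your remark that the identity $\epsilon(E,\chi,\psi)=+1$ ``should be exactly what makes the parities align'' is speculative. That sign is a single scalar involving $\Nm_{E/F}(\lambda_0)$, not a place-by-place constraint, and the paper's argument does not invoke it; the explicit description of $\lambda_0$ follows directly from the unramified assumptions on $K/F$, $\chi$, and $\psi$. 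So if you pursue your route, you should plan to evaluate the local $\epsilon$-factors directly rather than rely on that sign as a shortcut.

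Your justification of the ``in particular'' clause is correct.
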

\begin{proof}
By Propositon \ref{prop:epsilon_and_i}, the $i$ satisfying $\Hom_{i(T_E)(F)}(\sigma^+,\one)\neq0$ are precisely those arising from $\lambda_0$ and an isomorphism $K^3\cong L_{\lambda_0}$ of Hermitian spaces for $K/F$ as in \S\ref{ss:unitarysetup}, with  $\lambda_0$ as in \S\ref{ss:G'localvector}. If there exists an $\O_L$-lattice $M$ in $L_{\lambda_0}$ which is self-dual with respect to the Hermitian form on $K^3$, then we can choose the isomorphism $K^3\cong L_{\lambda_0}$ to send $\O_K^3$ to $M$. Hence it suffices to prove that such an $M$ exists.

 When $E = F\times F\times F$, our $\lambda_0$ is represented by $1$, so we can take $M = \O_K^3 \subseteq L_{\lambda_0} = K^3$.

 When $E/F$ is a field, note that the inclusion and norm maps induce mutually inverse isomorphisms between $E^\times / \Nm_{L/E}(L^\times)$ and $F^\times / \Nm_{K/F} (K^\times)$. Write $d(E/F)$ for the valuation of the different of $\O_E/\O_F$ (equivalently, of $\O_L/\O_K$). Because the norm of the different is the discriminant, $\lambda_0$ is represented by $\varpi_E^{-d(E/F)}$.  Using this, we see that we can take $M = \O_L$. 

When $E=F\times F'$ for a field $F'$, write $K'\coloneqq K\otimes_FF'$, and write $d(K'/K)$ for the valuation of the different of $\O_{K'}/\O_K$. Then we see that $\lambda_0$ is represented by $(1,\varpi_{F'}^{d(K'/K)})$ in 
\begin{align*}
E^\times/\Nm_{L/E}(L^\times)=(F^\times/\Nm_{K/F}(K^\times))\times(F'^\times/\Nm_{K'/F'}(K'^\times))\subset \{\pm1\}\times\{\pm1\}.
\end{align*}
Hence $M=\O_K\times\varpi_{K'}^{-d(K'/K)}\O_{K'}$ yields the desired $\O_L$-lattice.
\end{proof}
Recall from \S\ref{ss:G'localvector} that, under the unramified hypotheses of this section, the sign $\epsilon(E,\chi,\psi)=+1$.

Recall from \S\ref{ss:G2local} that $\mathcal{E}$ denotes an element of $\X(F)$ such that the associated cubic $F$-algebra is isomorphic to $E$. When $\mathcal{E}$ lies in $\X(\O_F)$, write $\O_{\mathcal{E}}$ for the associated cubic $\O_F$-algebra. Recall from \S\ref{ss:localPU3} the nonzero $G'(\O_F)$-fixed element $f_0$ of $\sigma^+$, and recall from Definition \ref{defn:localzetaintegral} the integral $\mathcal{I}(\mathcal{E},-,-)$.
\begin{thm}\label{prop:unramified_integral}
We have
\begin{align*}
|\mathcal{I}(\mathcal{E},\varphi_0,f_0)|=\begin{cases}
1 & \mbox{if }\mathcal{E}\mbox{ lies in }\X(\O_F)\mbox{ and }\O_{\mathcal{E}}\cong\O_E,\\
0 & \mbox{if }\mathcal{E}\mbox{ does not lie in }\X(\O_F).
\end{cases}
\end{align*}
\end{thm}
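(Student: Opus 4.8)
The plan is to use the $G'(\O_F)$-invariance of $\varphi_0$ and $f_0$ to collapse $\mathcal{I}(\mathcal{E},\varphi_0,f_0)$ to a sum over the double coset space $i(T_E)(F)\backslash G'(F)/G'(\O_F)$, and then to show that at most the trivial double coset contributes. Since $\mathcal{I}(\mathcal{E},\varphi,f)$ is independent of the choice of $i$ in its $G'(F)$-orbit, I would first take $i$ to be the embedding produced by Lemma \ref{lem:choose_X_integrally}, so that $i(\O_E)\subseteq J(\O_F)$. Next I would check that the integrand $g'\mapsto\alpha_{\mathcal{X}}(g'\cdot\varphi_0)\,\overline{\beta_{\mathcal{X}}(g'\cdot f_0)}$ descends to this double coset space: it is right $G'(\O_F)$-invariant because $\varphi_0$ and $f_0$ are $G'(\O_F)$-fixed, and it is left $i(T_E)(F)$-invariant because $i(T_E)(F)$ stabilizes $i$, hence fixes $\mathcal{X}=(a,i(\alpha),-i(\beta),d)$, so that Lemma \ref{lem:alpha_equivariance} (with $\nu|_{G'}=1$) gives $\alpha_{\mathcal{X}}(tg'\cdot\varphi_0)=\alpha_{\mathcal{X}}(g'\cdot\varphi_0)$, while $\beta_{\mathcal{X}}=\beta_i$ is tautologically $i(T_E)(F)$-invariant.

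The next step is to identify which cosets contribute, via the identity $\alpha_{\mathcal{X}}(g'\cdot\varphi_0)=\alpha_{g'^{-1}\cdot\mathcal{X}}(\varphi_0)$ (Lemma \ref{lem:alpha_equivariance} again, with $\nu(g')=1$). By compatibility with Lemma \ref{lem:integralminorbitfibers}, $G'$ acts on $\widetilde{\X}=F\times J\times J\times F$ by $g'\cdot(a,x,y,d)=(a,g'\cdot x,g'\cdot y,d)$, so it preserves $\O_{\min}$, fixes the outer $F$-factors, and preserves $\tr$ on $J$; hence $g'^{-1}\cdot\mathcal{X}\in\O_{\min}(F)\cap p^{-1}(\mathcal{E})$ for every $g'$. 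If $\mathcal{E}\notin\X(\O_F)$, then no translate $g'^{-1}\cdot\mathcal{X}$ lies in $\widetilde{\X}(\O_F)$ — its outer entries are $a,d$ and the traces of its middle entries are $b,c$, and one of $a,b,c,d$ is non-integral (using $\tr(J(\O_F))\subseteq\O_F$) — so Corollary \ref{cor:dumbcorsphericalvanishing} forces $\alpha_{\mathcal{X}}(g'\cdot\varphi_0)=0$ and hence $\mathcal{I}=0$, which is the second case of the theorem. If instead $\mathcal{E}\in\X(\O_F)$ with $\O_{\mathcal{E}}\cong\O_E$, then the good basis $(\alpha,\beta)$ lies in $\O_E$, so $i(\alpha),i(\beta)\in i(\O_E)\subseteq J(\O_F)$; applying Corollary \ref{cor:calculate_spherical_easy} to $g'^{-1}\cdot\mathcal{X}$ shows $\alpha_{g'^{-1}\cdot\mathcal{X}}(\varphi_0)$ equals $1$ when $g'^{-1}\cdot i(\O_E)\subseteq J(\O_F)$ and $0$ otherwise, and by Proposition \ref{prop:doublecoset} the former happens precisely when $g'\in i(T_E)(F)\,G'(\O_F)$, i.e.\ only on the trivial double coset.

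It then remains to evaluate the single surviving term. Its value at $g'=1$ is $\alpha_{\mathcal{X}}(\varphi_0)\,\overline{\beta_{\mathcal{X}}(f_0)}$, with $\alpha_{\mathcal{X}}(\varphi_0)=1$ by Corollary \ref{cor:calculate_spherical_easy} and $|\beta_{\mathcal{X}}(f_0)|=|\beta_i(f_0)|=1$ by Lemma \ref{lem:spherical_compare_Yang}, whose hypothesis is exactly the first conclusion of Lemma \ref{lem:choose_X_integrally}; and the relevant volume $\vol\big(i(T_E)(F)\backslash i(T_E)(F)G'(\O_F)\big)$ equals $\vol(G'(\O_F))/\vol(i(T_E)(F)\cap G'(\O_F))=1$, since Haar measures are normalized to give maximal compacts volume $1$ and $i(T_E)(F)\cap G'(\O_F)$ is the maximal compact of $i(T_E)(F)$ (the integrality $i(\O_E)\subseteq J(\O_F)$ making $i$ an $\O_F$-morphism on tori). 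This yields $|\mathcal{I}(\mathcal{E},\varphi_0,f_0)|=1$.

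The part I expect to require the most care is the double-coset bookkeeping: justifying the descent to $i(T_E)(F)\backslash G'(F)/G'(\O_F)$ together with the fact that the integral is a single term (no convergence issue once one knows the integrand is supported on one coset), passing between $\U_3(F)$ — where Proposition \ref{prop:doublecoset} is stated — and $G'(F)=\PU_3(F)$ — where $\varphi_0$, $f_0$, and the integral live — and pinning down the volume normalizations. Once these are settled, the theorem follows formally by assembling Lemma \ref{lem:choose_X_integrally}, Corollary \ref{cor:calculate_spherical_easy}, Proposition \ref{prop:doublecoset}, and Lemma \ref{lem:spherical_compare_Yang}.
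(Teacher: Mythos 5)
Your proposal follows the same route as the paper's proof: reduce via Lemma \ref{lem:alpha_equivariance} to studying where $\alpha_{g'^{-1}\cdot\mathcal X}(\varphi_0)$ is nonzero, choose $i$ integrally via Lemma \ref{lem:choose_X_integrally}, use Corollary \ref{cor:dumbcorsphericalvanishing} for the vanishing case, combine Corollary \ref{cor:calculate_spherical_easy} with Proposition \ref{prop:doublecoset} to identify the support as the single coset $i(T_E)(F)G'(\O_F)$, and conclude with Lemma \ref{lem:spherical_compare_Yang} plus the volume normalization. The one subtlety you flag but leave open — passing from $\U_3(F)$, where Proposition \ref{prop:doublecoset} is stated, to $G'(F)=\PU_3(F)$, where the integral lives — is genuine (the map $\U_3(F)\to G'(F)$ is not surjective when $K$ is a field, having cokernel $F^\times/\Nm_{K/F}(K^\times)\cong\Z/2$), but the paper's own proof silently elides the same point, so this does not make your argument a genuinely different one.
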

\begin{proof}
Since $G'$ lies in $\widetilde{M}^1$, Lemma \ref{lem:alpha_equivariance} yields
\begin{align*}
\mathcal I(\mathcal E, \varphi_0, f_0) &=\int_{i(T_E)(F)\backslash G'(F)} \alpha_{\mathcal X} (g'\cdot\varphi_0)\overline{\beta_{\mathcal{X}}(g'\cdot f_0)}\dd{g'} \\
&= \int_{i(T_E)(F)\backslash G'(F)} \alpha_{g'^{-1}\cdot\mathcal X} (\varphi_0) 
\overline{\beta_{\mathcal X}(g'\cdot f_0)}\dd{g'}.
\end{align*}
If $\mathcal E$ does not lie in $\mathbb X(\O_F)$, then for any $\mathcal X$ in $\O_{\min}(F)\cap p^{-1} (\mathcal E)$, we have $v(g'^{-1} \cdot \mathcal X) < 0$ for all $g'$ in $G'(F)$ because $p$ sends $\widetilde{\X}(\O_F)$ to $\X(\O_F)$. Together with Corollary \ref{cor:dumbcorsphericalvanishing}, this proves the second case.

For the first case, choose $i$ satisfying the conclusion of Lemma \ref{lem:choose_X_integrally}, and write $\mathcal{X}=(a,x,y,d)$ for the corresponding element of $\O_{\min}(F)\cap p^{-1}(\mathcal{E})$ under Lemma \ref{lem:integralminorbitfibers}. In particular, $x$ and $y$ lie in $J(\O_F)$. Then Corollary \ref{cor:calculate_spherical_easy} and Proposition \ref{prop:doublecoset} show that
\begin{align*}
\alpha_{g'^{-1}\cdot \mathcal X} (\varphi_0) = \begin{cases}
1 & \mbox{when }g'\mbox{ lies in }i(T_E)(F)G'(\O_F),\\
0 & \mbox{otherwise.}
\end{cases}
\end{align*}
Because $G'(\O_F)$ fixes $f_0$, Definition \ref{defn:beta} indicates that $\beta_{\mathcal X} (g' \cdot f_0) = \beta_{\mathcal X} (f_0)$ for all $g'$ in $i(T_E)(F) G'(\O_F)$. Therefore Lemma \ref{lem:spherical_compare_Yang} implies that the absolute value of our integral equals
$$
|\mathcal I(\mathcal E, \varphi_0, f_0)| = \vol\left(i(T_E)(F) \backslash i(T_E)(F)G'(\O_F)\right) = \vol\left(i(T_E)(F) \cap G'(\O_F) \backslash G'(\O_F)\right).$$
Since $i(\O_E)$ lies in $J(\O_F)$, we see that $i(T_E)(F)\cap G'(\O_F)$ is the maximal compact subgroup of $i(T_E)(F)$. Hence our choice of measures yields $\vol(i(T_E)(F)\cap G'(\O_F)\backslash G'(\O_F))=1$, as desired.
\end{proof}

\section{Ramified test vectors and local integrals}\label{s:ramified}
In this section, assume that $F$ is a nonarchimedean local field. Our goal is to prove Proposition \ref{prop:ramified_local_integrals}, which lets us choose local vectors with particularly nice local integrals.

Recall from \S\ref{ss:G2local} that $\mathcal{E}$ denotes an element of $\X(F)$ such that the associated cubic $F$-algebra $E$ is \'etale. When $\mathcal{E}$ lies in $\X(\O_F)$, write $\O_{\mathcal{E}}$ for the associated cubic $\O_F$-algebra.
\begin{lemma}\label{lem:cts_section}
Assume that $\mathcal{E}$ lies in $\X(\O_F)$, and write $\mathcal{M}$ for the stabilizer of $\mathcal{E}$ in $M(\O_F)$. There exists a continuous section $s:M(\O_F)/\mathcal{M}\ra M(\O_F)$ of the quotient whose image is a compact neighborhood of $1$.\end{lemma}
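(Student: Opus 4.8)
The plan is to exploit that $\mathcal M$ is \emph{finite} and then argue by elementary topology of profinite spaces. First I would record the finiteness. Since $E$ is \'etale the binary cubic $\mathcal E$ is nonzero, and as the center $\G_m$ of $M\cong\GL_2$ acts on $\X\cong\Sym^3\otimes\det^{-1}$ through the scaling character, the stabilizer of $\mathcal E$ in $M$ meets the center trivially and hence embeds into $\PGL_2$. Over $\overline F$ this embedding identifies it with a subgroup of the stabilizer of the three distinct points on $\mathbb P^1$ cut out by $\mathcal E$, which is finite (it is $\cong S_3$). Thus the stabilizer of $\mathcal E$ in $M$ is a finite group scheme over $F$, and $\mathcal M$, being contained in its set of $F$-points, is finite.

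Next I would set up the local structure of the quotient. The group $M(\O_F)$ is profinite, and its principal congruence subgroups $K_n$ form a neighborhood basis of $1$ by compact open subgroups; since $\mathcal M$ is finite I can fix $m$ with $K_m\cap\mathcal M=\{1\}$. Write $X\coloneqq M(\O_F)/\mathcal M$ and $q\colon M(\O_F)\to X$ for the quotient; then $X$ is compact Hausdorff and $q$ is a continuous, open, and closed surjection. For each $g\in M(\O_F)$ the set $gK_m\mathcal M=\bigsqcup_{\mu\in\mathcal M}gK_m\mu$ is a finite union of translates of $K_m$, hence clopen and stable under right translation by $\mathcal M$, so $q(gK_m)=q(gK_m\mathcal M)$ is clopen in $X$; and because $K_m\cap\mathcal M=\{1\}$ the restriction $q|_{gK_m}$ is a continuous bijection onto $q(gK_m)$, hence (compact domain, Hausdorff target) a homeomorphism, with continuous inverse $\tau_g\colon q(gK_m)\to M(\O_F)$.

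Finally I would globalize by a clopen-partition argument. By compactness of $X$, choose $g_1=1,g_2,\dots,g_r$ with $X=\bigcup_i q(g_iK_m)$; set $P_1\coloneqq q(K_m)$ and $P_i\coloneqq q(g_iK_m)\smallsetminus\bigcup_{j<i}q(g_jK_m)$ for $i\ge 2$, so that $P_1,\dots,P_r$ is a partition of $X$ into clopen sets. Define $s\colon X\to M(\O_F)$ by $s|_{P_i}\coloneqq\tau_{g_i}|_{P_i}$; this is well defined (as $P_i\subseteq q(g_iK_m)$), continuous (as the $P_i$ are clopen), and satisfies $q\circ s=\id_X$. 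Its image is compact, and it contains $\tau_1(q(K_m))=K_m$, an open neighborhood of $1$; hence $s(X)$ is a compact neighborhood of $1$ (and $s$ sends the base point to $1$).

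The only step with genuine content is the finiteness of $\mathcal M$; once that is in hand, the remainder is routine point-set topology (a finite clopen cover of a compact totally disconnected space refines to a clopen partition, and a continuous bijection out of a compact space into a Hausdorff space is a homeomorphism), so I do not anticipate any real obstacle beyond packaging this cleanly.
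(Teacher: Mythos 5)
Your proof is correct, and it follows the same overall strategy as the paper: observe that $\mathcal{M}$ is finite, note that $M(\O_F)\to M(\O_F)/\mathcal{M}$ is then a finite covering of a profinite space, and conclude that such a cover splits continuously with image a compact neighborhood of $1$. The differences are in the two ingredients. For finiteness of $\mathcal{M}$, the paper appeals to the discussion in \S\ref{ss:G2local} identifying $\mathcal{M}$ with $\Aut_{\O_F}(\O_{\mathcal{E}})$ (the Delone--Faddeev/Gan--Gross--Savin correspondence), whereas you give a self-contained geometric argument: the central $\G_m$ acts on $\X\cong\Sym^3\otimes\det^{-1}$ by the scaling character, so the stabilizer of a nonzero $\mathcal{E}$ meets the center trivially and injects into $\PGL_2$, where it lands in the finite stabilizer of the three distinct points cut out by the separable binary cubic. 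Both are clean; yours avoids invoking the algebra-theoretic dictionary, the paper's is shorter because the dictionary has already been set up. For the section, the paper cites that a finite \'etale (i.e.\ finite covering) map over a profinite space has a continuous section which is automatically an open embedding, and then translates by an element of $\mathcal{M}$ to put $1$ in the image; you unpack this into an explicit argument---pick a small congruence subgroup $K_m$ meeting $\mathcal{M}$ trivially so $q|_{gK_m}$ is a homeomorphism onto a clopen set, refine a finite cover by such sets to a clopen partition, and patch local inverses---and you arrange $g_1=1$ so no final translation is needed. This is essentially a proof of the abstract lemma the paper invokes, so it buys no generality but is more transparent. One cosmetic note: you could also observe that your image $s(X)$ is actually clopen (a finite union of the open sets $\tau_{g_i}(P_i)$), matching the paper's ``open embedding'' phrasing, though ``compact neighborhood'' is all the lemma requires.
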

\begin{proof}
The discussion from \S\ref{ss:G2local} shows that $\mathcal{M}$ is isomorphic to $\Aut_{\O_F}(\O_{\mathcal{E}})$ and hence is finite. Therefore the quotient map $M(\O_F)\ra M(\O_F)/\mathcal{M}$ is finite \'etale. Because $M(\O_F)/\mathcal{M}$ is profinite, this map has a continuous section $s:M(\O_F)/\mathcal{M}\ra M(\O_F)$, which is \'etale and hence an open embedding. Finally, after replacing $s$ with an $\mathcal{M}$-translate, we can assume that its image indeed contains $1$.
\end{proof}

Recall from \S\ref{ss:seesaw} the sign $\epsilon$ and the irreducible smooth representation $\sigma^{\epsilon}$ of $G'(F)$,
recall from \S\ref{ss:G'localvector} the sign $\epsilon(E,\chi,\psi)$, and assume that $\epsilon=\epsilon(E,\chi,\psi)$. Then Proposition \ref{prop:epsilon_and_i} shows there exists a unique
$G'(F)$-orbit of~$i$ in $\{E\hookrightarrow J\}$ satisfying $\Hom_{i(T_E)(F)}(\sigma^{\epsilon},\one)\neq0$. Let $i$ be in this $G'(F)$-orbit, and recall the element $\beta_i$ in $\Hom_{i(T_E)(F)}(\sigma^\epsilon, \one)$ from Definition \ref{defn:beta}.

Recall from Definition \ref{defn:localzetaintegral} the integral $\mathcal{I}(\mathcal{E},-,-)$.

\begin{lemma}\label{lem:ramified_local_integrals}
Let $\O_0$ be a cubic $\O_F$-subalgebra of $\O_E$, and let $f$ in $\sigma^\epsilon$ be an element such that $\beta_i(f) \neq 0$. Then there exists $\varphi_{\O_0}$ in $\Omega$ such that
\begin{align*}
\mathcal{I}(\mathcal{E},\varphi_{\O_0},f) = \begin{cases}
1 & \mbox{if }\mathcal E\mbox{ lies in }\mathbb X(\O_F)\mbox{ and }\O_{\mathcal{E}}\cong\O_0, \\
0 & \mbox{otherwise.}
\end{cases}
\end{align*}
\end{lemma}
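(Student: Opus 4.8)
The plan is to exhibit $\varphi_{\O_0}$ explicitly as a function supported on the locus in $\O_{\min}(F)$ cut out by the cubic order $\O_0$, and then unfold the integral $\mathcal{I}(\mathcal{E},\varphi_{\O_0},f)$ against the $G'(F)$-translates to see that only the ``integral'' embeddings contribute. First I would use the nonarchimedean model from \S\ref{sss:nonarchE6}: since $\Omega_{Z(F)}$ surjects onto $C_c^\infty(\O_{\min}(F))$ via the $\widetilde P(F)$-equivariant quotient, it suffices to produce a suitable $\varphi_{\O_0}$ in $\Omega$ whose image in $C_c^\infty(\O_{\min}(F))$ is a prescribed bump function, since $\alpha_{\mathcal X}(\varphi)$ only depends on the value at $\mathcal X$ of this image. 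Concretely, if $\mathcal E$ lies in $\X(\O_F)$ and $i$ is chosen (using Lemma \ref{lem:integralminorbitfibers}) with associated $\mathcal X = (a, i(\alpha), -i(\beta), d) \in \O_{\min}(F) \cap p^{-1}(\mathcal E)$, I would take $\varphi_{\O_0}$ so that its image in $C_c^\infty(\O_{\min}(F))$ is supported on (a small enough neighborhood of) the $G'(\O_F)$-orbit of $\mathcal X$, normalized so that $\alpha_{\mathcal X}(\varphi_{\O_0}) = 1$; this is possible precisely because $\O_0 \cong \O_{\mathcal E}$, i.e., because $i(\O_0) \subseteq J(\O_F)$. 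If $\mathcal E \notin \X(\O_F)$ or $\O_{\mathcal E} \not\cong \O_0$, I would take $\varphi_{\O_0} = 0$ (or any vector whose $C_c^\infty$-image misses all the relevant orbits), giving $\mathcal{I} = 0$ trivially; so the content is entirely in the first case.

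Next, assuming $\mathcal E \in \X(\O_F)$ and $\O_{\mathcal E} \cong \O_0$, I would unfold
\begin{align*}
\mathcal I(\mathcal E, \varphi_{\O_0}, f) = \int_{i(T_E)(F)\backslash G'(F)} \alpha_{\mathcal X}(g' \cdot \varphi_{\O_0}) \, \overline{\beta_{\mathcal X}(g' \cdot f)} \dd{g'} = \int_{i(T_E)(F)\backslash G'(F)} \alpha_{g'^{-1} \cdot \mathcal X}(\varphi_{\O_0}) \, \overline{\beta_{\mathcal X}(g' \cdot f)} \dd{g'},
\end{align*}
using that $G' \subseteq \widetilde M^1$ and Lemma \ref{lem:alpha_equivariance}. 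By construction of $\varphi_{\O_0}$, the integrand vanishes unless $g'^{-1} \cdot \mathcal X$ lies in the chosen neighborhood of the $G'(\O_F)$-orbit of $\mathcal X$; shrinking the support and invoking Proposition \ref{prop:doublecoset} (which identifies the $g'$ with $g'^{-1} i(\O_E) g' \subseteq J(\O_F)$ as exactly $i(T_E)(F) G'(\O_F)$), the domain of integration reduces to $i(T_E)(F) \backslash i(T_E)(F) G'(\O_F) \cong (i(T_E)(F) \cap G'(\O_F)) \backslash G'(\O_F)$, which has volume $1$. On that domain $\alpha_{g'^{-1}\cdot\mathcal X}(\varphi_{\O_0})$ is a locally constant function; after possibly rescaling $\varphi_{\O_0}$ by a locally-constant-on-$G'(\O_F)$ adjustment (or, more cleanly, choosing the $C_c^\infty$-bump to be $\beta_{\mathcal X}(g'\cdot f)/\overline{\beta_{\mathcal X}(f)}$-shaped, which is legitimate since $g' \mapsto \beta_{\mathcal X}(g'\cdot f)$ is smooth and $\beta_i(f)\neq 0$ makes it nonvanishing near $1$), the two factors combine so that the integrand is the constant $\overline{\beta_{\mathcal X}(f)}/\overline{\beta_{\mathcal X}(f)} \cdot (\text{normalization})$; a final normalization of $\varphi_{\O_0}$ by a scalar makes the integral equal $1$.

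The main obstacle I anticipate is bookkeeping the normalization so that the answer is \emph{exactly} $1$ rather than an undetermined nonzero scalar: because $\alpha_{\mathcal X}(g'\cdot\varphi_{\O_0})$ and $\overline{\beta_{\mathcal X}(g'\cdot f)}$ are both merely locally constant (not constant) on $G'(\O_F)$, one cannot just pull them out of the integral. The clean fix is to build $\varphi_{\O_0}$ directly from $f$: use the surjection $\Omega_{Z(F)} \twoheadrightarrow C_c^\infty(\O_{\min}(F))$ to lift the compactly supported function $g' \cdot \mathcal X \mapsto \vol(\cdots)^{-1}\,\overline{\beta_{\mathcal X}(f)}^{-1}\,\overline{\beta_{\mathcal X}(g' \cdot f)}^{-1} \cdot \mathbf 1_{i(T_E)(F)G'(\O_F)}(g')$ (well-defined on the orbit by the $i(T_E)(F)$-equivariance of $\beta$, with the $\mathcal X$-dependence absorbed via Lemma \ref{lem:alpha_equivariance}), and note this is locally constant and compactly supported on $\O_{\min}(F)$, hence does lift to $\Omega$. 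I would also need to check that $g' \mapsto \beta_{\mathcal X}(g' \cdot f)$ is nonvanishing on a neighborhood of $i(T_E)(F)$ small enough to contain the support — this follows from continuity together with $\beta_i(f) = \beta_{\mathcal X}(f) \neq 0$ — and that one can simultaneously arrange the support to fall inside $i(T_E)(F)G'(\O_F)$ via Proposition \ref{prop:doublecoset}. With these choices the integral collapses to $\int_{(i(T_E)(F)\cap G'(\O_F))\backslash G'(\O_F)} 1 \dd{g'} = 1$, as desired.
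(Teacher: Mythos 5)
Your proposal identifies the right high-level idea — lift a well-chosen bump in $C_c^\infty(\O_{\min}(F))$ to $\Omega$ and unfold — but it has two genuine gaps, both of which the paper's proof handles differently.

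First, you invoke Proposition \ref{prop:doublecoset}, but that result is stated and proved in \S\ref{s:unramified} under the hypothesis that $K/F$ is unramified (its proof uses that $L/E$ is unramified); in \S\ref{s:ramified} no such assumption is made, so the proposition is unavailable. The paper avoids it entirely: rather than building the support from a $G'(\O_F)$-orbit and identifying the relevant double coset, it takes $K'$ to be \emph{any} compact open subgroup fixing $f$, sets the support to be $(U\times K')\cdot\mathcal{X}_0$, and deduces $g'\in i(T_E)(F)K'$ directly from uniqueness of $u\in U$ with $u\cdot\mathcal{E}_0=\mathcal{E}$ plus the fact that $\operatorname{Stab}_{G'(F)}(\mathcal{X}_0)=i(T_E)(F)$. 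No integral double-coset structure is needed.

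Second, and more seriously, the bump you propose cannot exist as written: a function supported on (or on a closed neighborhood of) the $G'(\O_F)$-orbit of $\mathcal{X}$ inside $\O_{\min}(F)$ would have support of strictly smaller dimension than $\O_{\min}(F)$, hence cannot be a nonzero locally constant compactly supported function on $\O_{\min}(F)$. The paper gets around this by also moving in the $M$-direction: using Lemma \ref{lem:cts_section} it fixes a continuous section $s_0$ of $M(\O_F)\to M(\O_F)/\mathcal{M}_0$ with compact open image $U$, proves (via the finiteness of the stabilizer of $\mathcal{E}_0$ in $M$ and a dimension count) that $(U\times K')\cdot\mathcal{X}_0$ is compact \emph{open} in $\O_{\min}(F)$, and takes $\varphi_{\O_0}$ to be any lift of the indicator function of that set. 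This also directly handles the ``otherwise'' case: for $\mathcal{E}$ not in the $M(\O_F)$-orbit of $\mathcal{E}_0$, the whole orbit $p^{-1}(\mathcal{E})$ misses the support. Your fix for the local constancy of $\overline{\beta_{\mathcal{X}}(g'\cdot f)}$ (absorbing it into the bump) is not needed: with $K'$ chosen to fix $f$, the factor $\overline{\beta_{\mathcal{X}}(g'\cdot f)}$ is literally constant on $i(T_E)(F)K'$, and a single scalar rescaling of $\varphi_{\O_0}$ at the end produces exactly $1$.
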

\begin{proof}
Let $\mathcal{E}_0$ be an element of $\X(\O_F)$ such that the associated cubic $\O_F$-algebra is isomorphic to $\O_0$. Write~$\mathcal{X}_0$ for the element of $\O_{\min}(F)\cap p^{-1}(\mathcal{E}_0)$ corresponding to $i$ under Lemma \ref{lem:integralminorbitfibers}, and let $K'$ be a compact open subgroup of $G'(F)$ that fixes $f$. Write $\mathcal{M}_0$ for the stabilizer of $\mathcal{E}_0$ in $M(\O_F)$, write $s_0:M(\O_F)/\mathcal{M}_0\ra M(\O_F)$ for the section of $M(\O_F)\ra M(\O_F)/\mathcal{M}_0$ from Lemma \ref{lem:cts_section}, and write $U$ for the image of $s_0$.

We claim that $(U\times K')\cdot\mathcal{X}_0$ is a compact open subset of $\O_{\min}(F)$. To see this, it suffices to prove that the map $M\times G'\!/i(T_E)\ra\O_{\min}$ over $F$ given by $(m,g')\mapsto mg'\cdot\mathcal{X}_0$ is an open embedding in a neighborhood of~$1$. By dimension counting, it suffices to show it is injective in a neighborhood of $1$. If $mg'\cdot\mathcal{X}_0=\mathcal{X}_0$, then
\begin{align*}
m\cdot\mathcal{E}_0 = m\cdot p(\mathcal{X}_0) = m\cdot p(g'\cdot\mathcal{X}_0) = p(mg'\cdot\mathcal{X}_0) = p(\mathcal{X}_0) = \mathcal{E}_0.
\end{align*}
The discussion from \S\ref{ss:G2local} indicates that the stabilizer of $\mathcal{E}_0$ in $M$ is finite, so in a Zariski neighborhood of $1$ the above implies that $m=1$. Then $g'\cdot\mathcal{X}_0=\mathcal{X}_0$, so $g'$ lies in $i(T_E)$, concluding the proof of the claim.

Let $\varphi_{\O_0}$ be an element of $\Omega$ whose image in $\Omega_{Z(F)}$ corresponds to the indicator function of $(U\times K')\cdot\mathcal{X}_0$ under the injection $C^\infty_c(\O_{\min}(F))\hookrightarrow\Omega_{Z(F)}$ from \S\ref{sss:nonarchE6}.

Now choose an element $\mathcal X\in \O_{\min} (F) \cap p^{-1} (\mathcal E)$, with which we will calculate the local integral.
If $\mathcal{E}$ does not lie in $\X(\O_F)$ or $\O_{\mathcal{E}}$ is not isomorphic to $\O_0$, then $\mathcal{E}$ is not in the $M(\O_F)$-orbit of $\mathcal{E}_0$. This implies that $\alpha_{\mathcal{X}}(g'\cdot\varphi_{\O_0})=0$ for all $g'$ in $G'(F)$, so $\mathcal{I}(\mathcal{E},\varphi_{\O_0},f)=0$.

 If $\mathcal{E}$ lies in $\X(\O_F)$ and $\O_{\mathcal{E}}$ is isomorphic to $\O_0$, then the discussion from \S\ref{ss:G2local} indicates that there is a unique $u$ in $U$ such that $u\cdot\mathcal{E}_0=\mathcal{E}$. In particular, we can take $\mathcal X = u\cdot \mathcal X_0$ for the definition of the local integral, so
\begin{align*}
\mathcal{I}(\mathcal{E},\varphi_{\O_0},f) = \int_{i(T_E)(F)\backslash G'(F)}\alpha_{u\cdot\mathcal{X}_0}(g'\cdot\varphi_{\O_0})\overline{\beta_{u\cdot\mathcal{X}_0}(g'\cdot f)}\dd{g'}.
\end{align*}
Now Definition \ref{defn:alphanonarch} shows that
\begin{align*}
\alpha_{u\cdot\mathcal{X}_0}(g'\cdot\varphi_{\O_0}) = \begin{cases}
1 & \mbox{when }ug'^{-1}\cdot\mathcal{X}_0\mbox{ lies in }(U\times K')\cdot\mathcal{X}_0,\\
0 & \mbox{otherwise.}
\end{cases}
\end{align*}
We claim that $ug'^{-1}\cdot\mathcal{X}_0$ lies in $(U\times K')\cdot\mathcal{X}_0$ if and only if $g'$ lies in $i(T_E)(F)K'$. To see this, if $ug'^{-1}\cdot\mathcal{X}_0=u_1k_1\cdot\mathcal{X}_0$ for some $u_1$ in $U$ and $k_1$ in $K'$, then $u\cdot\mathcal{E}_0=u_1\cdot\mathcal{E}_0$, so $u=u_1$ by uniqueness. This implies that $g'^{-1}\cdot\mathcal{X}_0=k_1\cdot\mathcal{X}_0$. Then $g'k_1$ stabilizes $\mathcal{X}_0$, so it lies in $i(T_E)(F)$, concluding the proof of the claim.

The claim indicates that our integral equals
\begin{align*}
\int_{i(T_E)(F)\backslash i(T_E)(F)K'}\overline{\beta_{u\cdot\mathcal{X}_0}(g'\cdot f)}\dd{g'}.
\end{align*}
Because $K'$ fixes $f$, we obtain $$\mathcal I(\mathcal E, \varphi_{\mathcal O_0}, f) = \overline{\beta_{u\cdot\mathcal X_0} (f)}\vol(i(T_E)(F) \backslash i(T_E)(F) K').$$ Finally, dividing $\varphi_{\O_0}$ by the above constant yields the desired result.
\end{proof}
Now we allow $E$ to vary. Suppose we are given, for each isomorphism class of cubic \'etale $F$-algebras $E$, an element $C_E$ of $\C^\times$.
\begin{prop}\label{prop:ramified_local_integrals}
    There exists $\varphi_0$ in $\Omega$ and     
    $f_0$ in $\sigma^\epsilon$ with the following property: for all $\mathcal E$ in $\X(F)$, we have
    $$\mathcal I(\mathcal E, \varphi_0, f_0) = \begin{cases}
        C_E^{-1} & \text{if $\mathcal E$ lies in $\X(\O_F)$ and $\O_{\mathcal E} \cong \O_E$ for some cubic \'etale $F$-algebra $E$ with $\epsilon(E, \chi, \psi) = \epsilon$,}\\
        0 & \text{otherwise.}
    \end{cases}$$
\end{prop}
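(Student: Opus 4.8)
The plan is to build $\varphi_0$ as a finite linear combination of the vectors supplied by Lemma~\ref{lem:ramified_local_integrals}, one for each relevant cubic étale $F$-algebra, weighted by the $C_E^{-1}$; the vector $f_0$ then only needs to be a simultaneous test vector for all of them. First I would recall that a $p$-adic field has only finitely many extensions of each bounded degree, so there are only finitely many isomorphism classes of cubic étale $F$-algebras; let $E_1,\dots,E_m$ be those with $\epsilon(E_j,\chi,\psi)=\epsilon$. For each $j$, Proposition~\ref{prop:epsilon_and_i} gives the unique $G'(F)$-orbit of embeddings $i_j\colon E_j\hookrightarrow J$ with $\Hom_{i_j(T_{E_j})(F)}(\sigma^\epsilon,\one)\neq 0$; I would fix a representative and write $\beta_j\coloneqq\beta_{i_j}$ for the functional of Definition~\ref{defn:beta}. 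Since a different representative changes $\beta_j$ only by precomposition with an automorphism of $\sigma^\epsilon$, the condition ``$\beta_j(f_0)\neq 0$'' depends only on the orbit.

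Next I would pick $f_0\in\sigma^\epsilon$ with $\beta_j(f_0)\neq 0$ for every $j\leq m$, which is possible because each $\ker\beta_j$ is a proper $\C$-subspace and a vector space over an infinite field is not a finite union of proper subspaces. With $f_0$ fixed, I would apply Lemma~\ref{lem:ramified_local_integrals} to each $E_j$ with $\O_0\coloneqq\O_{E_j}$ (a cubic $\O_F$-subalgebra of $\O_{E_j}$) and $f=f_0$, obtaining $\varphi_j\coloneqq\varphi_{\O_{E_j}}\in\Omega$ with $\mathcal I(\mathcal E,\varphi_j,f_0)=1$ when $\mathcal E\in\X(\O_F)$ and $\O_{\mathcal E}\cong\O_{E_j}$, and $0$ otherwise. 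Then I would set $\varphi_0\coloneqq\sum_{j=1}^m C_{E_j}^{-1}\varphi_j$.

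To verify the conclusion, note that $\alpha_{\mathcal X}$ and the $\widetilde G(F)$-action on $\Omega$ are $\C$-linear, so $\varphi\mapsto\mathcal I(\mathcal E,\varphi,f_0)$ is $\C$-linear and $\mathcal I(\mathcal E,\varphi_0,f_0)=\sum_j C_{E_j}^{-1}\mathcal I(\mathcal E,\varphi_j,f_0)$. If $\mathcal E\in\X(\O_F)$ with $\O_{\mathcal E}\cong\O_{E_j}$ for some $j$ (necessarily unique, as the $E_j$ are pairwise non-isomorphic), only the $j$-th summand survives and the value is $C_{E_j}^{-1}=C_E^{-1}$ with $E=\Frac(\O_{\mathcal E})$. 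In every other case — $\mathcal E\notin\X(\O_F)$, or $\O_{\mathcal E}$ non-maximal, or the étale algebra of $\mathcal E$ not among the $E_j$ (in particular when its $\epsilon$-invariant is not $\epsilon$) — we have $\O_{\mathcal E}\not\cong\O_{E_j}$ for all $j$, so every summand vanishes and $\mathcal I(\mathcal E,\varphi_0,f_0)=0$.

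I do not expect a real obstacle here: the genuine content is Lemma~\ref{lem:ramified_local_integrals}, already proved, and the rest is organizational — cutting down to finitely many algebras, producing a single $f_0$ valid for all of them, and using the ``orthogonality'' of the $\varphi_j$ (detected by the isomorphism class of $\O_{\mathcal E}$) so that the linear combination separates the cases. The only delicate point is interpretive: for $\mathcal E$ whose étale algebra $E'$ satisfies $\epsilon(E',\chi,\psi)\neq\epsilon$ the symbol $\mathcal I(\mathcal E,-,-)$ is not literally covered by Definition~\ref{defn:localzetaintegral}; this is harmless, since such $\mathcal E$ lie in the ``otherwise'' case, each $\varphi_j$ contributes $0$ to them, and in the applications (Theorems~\ref{thm:general} and~\ref{thm:refined}) they are treated separately via \cite[Proposition 6.2]{ourselves}.
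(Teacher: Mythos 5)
Your proof is correct and takes essentially the same approach as the paper: enumerate the finitely many cubic \'etale $F$-algebras $E$ with $\epsilon(E,\chi,\psi)=\epsilon$, choose a single $f_0$ with $\beta_i(f_0)\neq 0$ for all of them (which the paper also justifies by the fact that a vector space over $\C$ is not a finite union of proper subspaces), apply Lemma~\ref{lem:ramified_local_integrals} to each with $\O_0=\O_E$, and take the weighted sum $\varphi_0=\sum_E C_E^{-1}\varphi_{\O_E}$. Your closing remark about $\mathcal I(\mathcal E,-,-)$ not being literally defined when $\epsilon(E',\chi,\psi)\neq\epsilon$ flags an interpretive point the paper leaves implicit, and your explanation of why it is harmless in the applications is accurate.
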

\begin{proof}
There are finitely many (isomorphism classes of) cubic \'etale $F$-algebras $E$, so in particular finitely many $E$ satisfy $\epsilon=\epsilon(E,\chi,\psi)$. For each such $E$, fix a corresponding $i: E \hookrightarrow J$ and hence $\beta_i$ as above. Because each $\beta_i$ is nontrivial and $\C$ is infinite, there exists an $f_0$ in $\sigma^\epsilon$ such that $\beta_i(f_0) \neq 0$ for each such $E$. Finally, for each such $E$ write $\varphi_{\O_E}$ for the corresponding element of $\Omega$ constructed in Lemma \ref{lem:ramified_local_integrals} with $f = f_0$. Then Lemma \ref{lem:ramified_local_integrals} implies that it suffices to take 
\begin{gather*}
\varphi_0 \coloneqq \sum_{\substack{E\text{ with}\\ \epsilon(E, \chi, \psi) = \epsilon}}C_E^{-1}\varphi_{\O_E}. \qedhere
\end{gather*}
\end{proof}

\section{Archimedean test vectors and local integrals}\label{s:archimedean}
In this section, our goal is to prove Theorem \ref{thm:arch_integral}, which computes our local integrals at archimedean places. This calculation relies heavily on work of Pollack \cite{MR4094735}. First, in \S\ref{ss:freudenthal} and \S\ref{ss:lie} we recall some structural results about $\widetilde{G}(\RR)$ and its complexified Lie algebra. Then, in \S\ref{ss:Gtildearch} we define the element of $\Omega$ that we will use. Finally, in \S\ref{ss:archlocalintegral} we define the element of $\sigma^-$ that we will use and compute the associated integral.

\subsection{The Freudenthal construction of $\widetilde G$}\label{ss:freudenthal}
In the computations of this section, we will use the following alternate description of $\widetilde {\mathfrak g}$ from \cite[\S4]{MR4094735}. Write $\widetilde{\mathfrak m}^0 $ for the Lie algebra of $\widetilde{M}^1$. Then we have an identification
\begin{align*}
    \widetilde{\mathfrak g} = (\sl_2 \oplus \widetilde{\mathfrak m}^0 )\oplus (V_2 \otimes \widetilde{\X})
\end{align*}
\cite[Section 4.1, \S4.2.4]{MR4094735}, where $V_2$ denotes the standard representation of $\sl_2$ with standard basis $\set{e, f}$. Under this identification, the Lie bracket between the two summands is the natural action, and $\widetilde{\mathfrak{n}}$ corresponds to
\begin{align*}
    \big(\begin{bmatrix} 0 & * \\ 0 & 0 \end{bmatrix} \oplus 0\big)\oplus \big(e\otimes\widetilde{\X}\big) \subseteq \widetilde{\mathfrak g}.
\end{align*}
 
For the rest of this section, assume that $F$ is an archimedean local field. Choose $\widetilde{K}$ to be the maximal compact subgroup of $\widetilde{G}(\RR)$ with Lie algebra equal to the fixed points of the explicit Cartan involution from \cite[\S4.2.3]{MR4094735}. We identify $\widetilde{K}$ with $\SU(2)_\ell\times^{\{\pm1\}}\SU(6)/\mu_3(\C)$ using \cite[Proposition 4.1]{GW96}. By checking on Lie algebras, we see that
\begin{align*}
\SU(2)_\ell\times^{\{\pm1\}}\SU(2)_s\times\PU(3) \ra^\sim \widetilde{K}\cap(G(\RR)\times G'(\RR)),
\end{align*}
where 
the map $\SU(2)_\ell\to\widetilde{K}$ corresponds to the first factor, and the map $\SU(2)_s \times \PU(3) \to\widetilde{K}$ corresponds to the tensor product map $\SU(2) \times \SU(3) \to \SU(6)/\mu_3(\C)$ into the second factor.

In the notation of \cite[p.~1242]{MR4094735}, recall that the complexified Lie algebra of $\widetilde{K}$ is spanned by the following:
\begin{itemize}
    \item The complexified Lie algebra of $\SU(2)_\ell$ is given by the $\sl_2$-triple $\{e_\ell, f_\ell, h_\ell\}$ in $\widetilde{\g}_{\C}$. 
    \item The complexified Lie algebra of $\SU(6)/\mu_3(\C)$ is spanned by $n_E(Z)$, $n_H(Z)$, and $n_F(Z)$ for $Z$ in $J_\C$. 
\end{itemize}

\subsection{Some explicit elements in $\widetilde{\g}_\C$}\label{ss:lie}
In the notation of \cite{MR4094735}, we now describe some explicit elements in the complexified Lie algebra of $\widetilde{G}$. Write $J_0$ for the trace-zero subspace of $J$.
\begin{definition}
    For all $Z$ in $J_{0,\C}$,
    \begin{enumerate}
        \item Write $M(\Phi_{1,Z})$ in $\widetilde{\mathfrak m}^0_\C$ for the element acting on $\widetilde{\X}$ by
                $$M(\Phi_{1,Z}) (a, x, y, d) \coloneqq (0, 2Z\circ x, -2Z\circ y, 0).$$
        \item Write $n_L(Z)$ in $\widetilde{\mathfrak m}^0_\C$ for the element acting on $\widetilde{\X}$ by
$$n_L(Z) (a, x, y, d) \coloneqq (0, aZ, (x + Z)^\# - x^\# - Z^\#, \tr(y\circ Z)).$$
        \item Write $R(Z)$ in $\widetilde{\mathfrak{m}}^0_\C$ for $\frac12M(\Phi_{1,Z})+in_L(Z)$, and write $S(Z)$ in $\widetilde{\mathfrak{n}}_\C$ for $ie\otimes(0,iZ,-Z,0)$.
        \item Write $h_{-1}(Z)$ in $\widetilde{\g}_{\C}$ for $R(Z)+\frac12n_H(Z)$, and write $h_1(Z)$ in $\widetilde{\g}_{\C}$ for $S(Z)-n_F(Z)$.
    \end{enumerate}
\end{definition}
Since $J$ arises from the associative algebra $\operatorname{M}_3(\C)$, \cite[\S3.3.1]{MR4094735} shows that $M(\Phi_{1,Z})$ agrees with the notation from \cite[(3)]{MR4094735} and \cite[p.~1229]{MR4094735}. One immediately sees that $n_L(Z)$ agrees with the notation from \cite[p.~1228]{MR4094735}, and \cite[p.~1249]{MR4094735} shows that $h_{-1}(Z)$ and $h_1(Z)$ agree with the notation from \cite[p.~1242]{MR4094735}.

\begin{lemma}\label{lem:archcommutators}
     Let $Z_1$ and $Z_2$ be elements in $J_{0,\C}=\operatorname{M}_3(\C)^{\tr=0}$ satisfying
     \begin{align*}
         Z_1Z_2=Z_2Z_1=Z_1\circ Z_2=0\mbox{ and }(Z_1 + Z_2)^\# - Z_1^\# - Z_2^\# = 0.
     \end{align*}
     Then $[R(Z_1), n_H(Z_2)] = [R(Z_1), n_F(Z_2)] = [S(Z_1), n_H(Z_2)] = [S(Z_1), n_F(Z_2)] = 0$.
\end{lemma}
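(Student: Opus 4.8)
The plan is to reduce everything to explicit commutator identities in the Freudenthal model of $\widetilde{\g}_\C$ recalled in \S\ref{ss:freudenthal} and \S\ref{ss:lie}, using the hypotheses on $Z_1,Z_2$ to kill the relevant terms. First I would recall the precise formulas for the various operators from \cite{MR4094735}: $M(\Phi_{1,Z})$, $n_L(Z)$, $n_H(Z)$, $n_F(Z)$, $n_E(Z)$, and how they act on $\widetilde{\X} = F\times J\times J\times F$ and bracket among themselves. The key point is that $R(Z_1) = \tfrac12 M(\Phi_{1,Z_1}) + i\,n_L(Z_1)$ and $S(Z_1) = i e\otimes(0, iZ_1, -Z_1, 0)$ both lie in $\widetilde{\g}_\C$, and by bilinearity of the Lie bracket it suffices to compute $[M(\Phi_{1,Z_1}), n_H(Z_2)]$, $[n_L(Z_1), n_H(Z_2)]$, $[M(\Phi_{1,Z_1}), n_F(Z_2)]$, $[n_L(Z_1), n_F(Z_2)]$, and the two brackets $[e\otimes(0,iZ_1,-Z_1,0), n_H(Z_2)]$, $[e\otimes(0,iZ_1,-Z_1,0), n_F(Z_2)]$.

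Next I would observe that each of these brackets, by the structure of $\widetilde{\g}_\C$, can be expanded into expressions built from the associative products $Z_1 Z_2$, $Z_2 Z_1$, the Jordan product $Z_1\circ Z_2 = \tfrac12(Z_1Z_2 + Z_2Z_1)$, the ``cross term'' $(Z_1+Z_2)^\# - Z_1^\# - Z_2^\#$ (the linearization of the adjugate, i.e. the Freudenthal cross product), and the trace pairings $\tr(Z_1\circ Z_2)$, $\tr(Z_1^\# \circ Z_2)$, etc. The hypotheses $Z_1Z_2 = Z_2Z_1 = Z_1\circ Z_2 = 0$ and $(Z_1+Z_2)^\# - Z_1^\# - Z_2^\# = 0$ are designed precisely so that every term appearing in these expansions vanishes; note that $Z_1 Z_2 = Z_2 Z_1 = 0$ also forces $\tr(Z_1\circ Z_2) = 0$ and kills any cross-product-type term of the form $Z_1 \circ Z_2^\#$ or $Z_2 \circ Z_1^\#$ once one uses $\operatorname{M}_3$-identities. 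I would carry out each of the (at most six) bracket computations in turn, invoking the commutator formulas from \cite[\S3--4]{MR4094735} (e.g. $[n_L(A), n_F(B)]$, $[M(\Phi_{1,A}), n_F(B)]$, $[n_H(A), S(B)]$, etc.), and check that each reduces to a sum of the above vanishing quantities.

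The main obstacle I anticipate is bookkeeping: correctly transcribing Pollack's commutator formulas and matching the normalizations (factors of $2$, signs, and the role of $(-)^*$ versus $(-)$ on $J$, as well as the placement of $i$'s coming from the $R$ and $S$ definitions), rather than any conceptual difficulty. A secondary subtlety is that $n_H(Z_2)$ and $n_F(Z_2)$ live in the $\widetilde{\mathfrak m}^0_\C$ and $\widetilde{\mathfrak n}_\C$ parts respectively, so the brackets with $R(Z_1)$ (which has an $\widetilde{\mathfrak m}^0_\C$ component and, via $n_L$, possibly an action on $\widetilde{\X}$) and with $S(Z_1)$ (which lives in $e\otimes \widetilde{\X}$) land in different graded pieces; I would organize the computation by tracking which graded component each term lands in and showing it vanishes there. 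Once all six brackets are shown to vanish, the result follows by bilinearity, since $R(Z_1) + \tfrac12 n_H(Z_1)$ and $S(Z_1) - n_F(Z_1)$ only involve $Z_1$ (and $n_H(Z_1), n_F(Z_1)$ commute with $n_H(Z_2), n_F(Z_2)$ by the $\SU(6)$-relations, using $[Z_1,Z_2] = 0$ again) — though in fact for \emph{this} lemma only the brackets with the $R(Z_1)$ and $S(Z_1)$ pieces against $n_H(Z_2), n_F(Z_2)$ are asserted, so I would just verify those directly.
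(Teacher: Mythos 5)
Your plan — expand $R(Z_1)$ and $S(Z_1)$ into their components ($M(\Phi_{1,Z_1})$, $n_L(Z_1)$, $e\otimes(0,iZ_1,-Z_1,0)$), compute the resulting six brackets against $n_H(Z_2)$ and $n_F(Z_2)$ directly from Pollack's formulas, and show each term is killed by the hypotheses — is feasible in principle but is a genuinely different, and substantially more laborious, route than what the paper does. The paper's proof first establishes the auxiliary commutation $[n_H(Z_1), n_H(Z_2)] = [n_H(Z_1), n_F(Z_2)] = [n_F(Z_1), n_F(Z_2)] = 0$ (via \cite[Claim 6.3.2]{MR4094735} and the last identity in \cite[\S3.3.1]{MR4094735}), which by bilinearity lets one replace $R(Z_1)$ with $h_{-1}(Z_1) = R(Z_1) + \tfrac12 n_H(Z_1)$ and $S(Z_1)$ with $h_1(Z_1) = S(Z_1) - n_F(Z_1)$; the lemma then follows in one stroke from \cite[Proposition 6.2.1]{MR4094735}, which already gives the vanishing of $[h_{\pm1}(Z_1), n_H(Z_2)]$ and $[h_{\pm1}(Z_1), n_F(Z_2)]$ under exactly the hypotheses at hand. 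You actually touch on this reduction in your closing parenthetical (noting $h_{-1}(Z_1) = R(Z_1) + \tfrac12 n_H(Z_1)$ and that the $n_H/n_F$ pieces commute), but then dismiss it in favor of verifying the $R,S$ brackets directly — that dismissal is precisely where you part ways with the paper, and it is the more expensive choice. Your approach would require re-deriving a sizable chunk of Pollack's commutator calculus and carefully tracking the graded pieces, whereas the paper's route trades that bookkeeping for two citations. If you do carry out the direct computation, the plan is sound, but be warned that you will essentially be reproving \cite[Proposition 6.2.1]{MR4094735} on the side; invoking it directly after the $h_{\pm1}$ substitution is both shorter and less error-prone.
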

\begin{proof}
We claim that
\begin{align*}
[n_H(Z_1), n_H(Z_2)] = [n_H(Z_1), n_F(Z_2)] = [n_F(Z_1), n_F(Z_2)] = 0.
\end{align*}
To see this, use the description of $n_H$ and $n_F$ from \cite[p.~1246]{MR4094735} (where $\{v_1,v_2,v_3\}$ is defined in \cite[\S4.2.3]{MR4094735}) and apply \cite[Claim 6.3.2]{MR4094735} along with the last identity in \cite[\S3.3.1]{MR4094735}.  Using the claim, we can replace $S(Z_1)$ with $h_1(Z_1)$ and $R(Z_1)$ with $h_{-1}(Z_1)$. Then the lemma follows immediately from  \cite[Proposition 6.2.1]{MR4094735}.
\end{proof}

\subsection{Archimedean vectors for $\widetilde{G}$}\label{ss:Gtildearch}
Recall from \S\ref{ss:seesaw} that the sign $\epsilon$ equals $-1$, and recall from Definition \ref{defn:localsigma} the irreducible smooth representation $\sigma^-$ of $G'(\RR)$. Recall from \S\ref{ss:locint} the odd integer $N$ associated with~$\chi$, and write $m$ for the non-negative integer $\frac{\abs{N}-1}{2}$.

Recall that $\sigma^-$ has highest weight $(m, m, -2m)$ when $N$ is positive and $(2m, -m, -m)$ when $N$ is negative. To emphasize the dependence on $N$, we write $\sigma_N\coloneqq\sigma^-$. Write $\theta(-)$ for the theta lift from $G'$ to $G$ from \cite[Definition 2.5]{ourselves}, and recall that $\theta(\sigma_N)$ is isomorphic to the irreducible smooth representation $\pi_{m+1}$ of $G(\RR)$ from \S\ref{ss:qds} \cite[Theorem 5.2]{NDPC}.

We now define an element $\varphi_N$ of $\Omega$ by using certain raising operators. Define the matrices
\begin{align*}
Z_1 \coloneqq \begin{bmatrix}
    0&0& 1\\0&0&0 \\ 0&0&0 
\end{bmatrix}\quad\mbox{and}\quad Z_2 \coloneqq \begin{cases}\begin{bmatrix}
    0&0&0  \\0&0& 1\\ 0&0&0
\end{bmatrix} & \mbox{when }N\mbox{ is positive,} \\ \begin{bmatrix}
    0&1&0  \\0&0& 0\\ 0&0&0
\end{bmatrix} & \mbox{when }N\mbox{ is negative}
\end{cases}  
\end{align*}
in $J_{0,\C}=\operatorname{M}_3(\C)^{\tr=0}$, and define the raising operator
\begin{align*}
    \mathcal D_N \coloneqq \sum_{j = 0}^m (-1)^{j}\binom{m}{j} h_{-1}(Z_1)^{j}  h_{-1}(Z_2) ^{m-j} h_1(Z_1)^{m-j}  h_1(Z_2)^{j}  \in  (\widetilde{\g}_\C)^{\otimes2m}.
\end{align*}

Recall from \S\ref{ss:qds} that $\mathbb{V}_{m+1}\coloneqq\Sym^{2m+2}\boxtimes\one$ is the minimal $K$-type of $\pi_{m+1}$ and that $x_\ell^{2m+2}$ is a highest weight vector in $\mathbb{V}_{m+1}$, and recall from \S\ref{sss:localFourierE6arch} that $\widetilde{\mathbb{V}}_1\coloneqq\Sym^2\boxtimes\one$ is the minimal $\widetilde{K}$-type of $\Omega$ and that $x_\ell^2$ is a highest weight vector in $\widetilde{\mathbb{V}}_1$. Define $\varphi_N\coloneqq\mathcal D_Nx_\ell^2$ in $\Omega$.

\begin{prop}
\label{prop:hwv}
The image of $\varphi_N$ under the theta lift map $$\Omega\ra\sigma_N\boxtimes\theta(\sigma_N)\cong\sigma_N\boxtimes\pi_{m+1}$$ lies in $\sigma_N\boxtimes x_\ell^{2m+2}$.
\end{prop}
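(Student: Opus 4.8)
The plan is to combine the equivariance of the archimedean theta lift with a characterization of $x_\ell^{2m+2}$ inside $\pi_{m+1}$. The theta lift map $\Omega\to\sigma_N\boxtimes\theta(\sigma_N)=\sigma_N\boxtimes\pi_{m+1}$ is equivariant for $G(\RR)\times G'(\RR)$, where $G(\RR)$ acts only on the factor $\pi_{m+1}$ and $G'(\RR)=\PU(3)$ only on the factor $\sigma_N$; in particular it is equivariant for $\SU(2)_\ell\times\SU(2)_s\subseteq K$ and for the elements $e_\ell$, $h_\ell$ and the subalgebra $\mathfrak{su}(2)_{s,\C}$, all of which lie in $\mathfrak{k}_{G,\C}\subseteq\g_\C$. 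Using the $K$-type decomposition of the quaternionic (limit of) discrete series $\pi_{m+1}=\pi'_{2m+4}$ from \cite{GW96}, every $K$-type of $\pi_{m+1}$ has the form $\Sym^p\boxtimes\Sym^q$ with $p\geq2m+2$; consequently a vector of $\pi_{m+1}$ that is $\SU(2)_s$-invariant, is annihilated by $e_\ell$, and has $h_\ell$-weight $2m+2$ must lie in the multiplicity-one minimal $K$-type $\Sym^{2m+2}\boxtimes\one$ as an $\SU(2)_\ell$-highest weight vector, hence is a scalar multiple of $x_\ell^{2m+2}$. Writing the image of $\varphi_N$ under the theta lift map as $\sum_j u_j\otimes w_j$ with the $u_j\in\sigma_N$ linearly independent, and noting that $\SU(2)_s$, $e_\ell$ and $h_\ell$ act only on the vectors $w_j\in\pi_{m+1}$, it suffices to show that this image is $\SU(2)_s$-invariant, is annihilated by $e_\ell$, and has $h_\ell$-weight $2m+2$.

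Since $e_\ell$, $h_\ell$ and $\mathfrak{su}(2)_{s,\C}$ lie in $\g_\C$, by equivariance it is enough to verify these three properties for $\varphi_N=\mathcal D_Nx_\ell^2$ as a vector of $\Omega$, acted on through $\widetilde{\g}_\C$. The weight assertion is a short computation with the formulas of \cite[\S4]{MR4094735}: $x_\ell^2$ has $h_\ell$-weight $2$, each of the $2m$ factors $h_{\pm1}(Z_i)$ occurring in $\mathcal D_N$ is $h_\ell$-homogeneous, and their $h_\ell$-weights sum to $2m$. For $e_\ell\cdot\varphi_N=0$ one commutes $e_\ell$ through the factors of $\mathcal D_N$; since $e_\ell\cdot x_\ell^2=0$, it remains to check that $[e_\ell,\mathcal D_N]$ annihilates $x_\ell^2$, which follows from the brackets $[e_\ell,h_{\pm1}(Z_i)]$ recorded in \cite{MR4094735} together with the commutation relations among the $h_{\pm1}(Z_i)$ established in Lemma~\ref{lem:archcommutators} and its proof.

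The crux, and the step I expect to be the main obstacle, is the $\SU(2)_s$-invariance of $\varphi_N$; the precise shape of $\mathcal D_N$ is designed exactly for this. For $X\in\mathfrak{su}(2)_{s,\C}$ we have $X\cdot\varphi_N=[X,\mathcal D_N]\,x_\ell^2$ (using $X\cdot x_\ell^2=0$), so one must show that $[X,\mathcal D_N]$ kills $x_\ell^2$. The matrices $Z_1$ and $Z_2$ are rank-one nilpotents (namely $E_{13}$ and $E_{23}$, or $E_{12}$ when $N<0$) for which the products $Z_1Z_2$, $Z_2Z_1$, $Z_1\circ Z_2$ and the adjugates $Z_1^\#$, $Z_2^\#$, $(Z_1+Z_2)^\#$ all vanish, so Lemma~\ref{lem:archcommutators} applies and $h_{-1}(Z_1),h_{-1}(Z_2),h_1(Z_1),h_1(Z_2)$ satisfy the commutation relations it dictates. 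Moreover $Z_1$ and $Z_2$ span a two-dimensional subspace of $J_{0,\C}$ on which $\SU(2)_s$ acts through its standard representation, so that $h_{-1}(Z_1),h_{-1}(Z_2)$ and likewise $h_1(Z_1),h_1(Z_2)$ transform in the standard representation of $\SU(2)_s$; the alternating binomial combination defining $\mathcal D_N$ is then precisely the $\SU(2)_s$-invariant element of the relevant $2m$-fold tensor power (an $m$-th power of the symplectic pairing between the two copies of the standard representation). Using the commutation relations to reorder factors freely, one checks that $\mathfrak{su}(2)_{s,\C}$ annihilates this combination, so $[X,\mathcal D_N]\,x_\ell^2=0$. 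The $\g'_\C$-side contributes nothing, as $\g'_\C$ commutes with everything above. Together with the preceding paragraph, this establishes the three properties and hence the proposition.
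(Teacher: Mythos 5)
Your proposal and the paper's proof share the same core computation — verifying $e_\ell\varphi_N=0$ and $h_\ell\varphi_N=(2m+2)\varphi_N$ from the commutators $[e_\ell,h_{\pm1}(Z)]=0$ and $[h_\ell,h_{\pm1}(Z)]=h_{\pm1}(Z)$ — but you replace the paper's citation of \cite[Proposition 4.2, Theorem 5.2]{NDPC} for the reduction step with a direct $K$-type analysis of $\pi_{m+1}$. That part is a reasonable, self-contained alternative.

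However, your treatment of $\SU(2)_s$-invariance has a genuine error and is also unnecessary. The assertion that ``$Z_1$ and $Z_2$ span a two-dimensional subspace of $J_{0,\C}$ on which $\SU(2)_s$ acts through its standard representation'' is false: $\SU(2)_s$ sits inside $G(\RR)$, and $G(\RR)$ commutes with $G'(\RR)$ inside $\widetilde G(\RR)$ (that is the whole point of the dual pair), while $J_0\cong\g'$ is the Lie algebra of $G'$; so $\SU(2)_s$ acts trivially on $J_{0,\C}$. The correct picture, if one wanted to pursue this route, is that for a \emph{fixed} $Z$ the pair $\{h_1(Z),h_{-1}(Z)\}$ spans a copy of the standard $\SU(2)_s$-representation inside $\widetilde{\p}_\C$ (consistent with the decomposition $\wedge^3\C^6|_{\SU(2)_s\times\SU(3)}\cong(\Sym^3\C^2\boxtimes\det)\oplus(\C^2\boxtimes\sl_3)$), not the pair indexed by $Z_1,Z_2$. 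Your identification of $\mathcal D_N$ with an $m$-th power of a symplectic form happens to produce the right element up to sign, but the justification you give for its $\SU(2)_s$-invariance is built on the wrong rep.

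More importantly, the whole $\SU(2)_s$ step is redundant. The $K$-type decomposition of $\pi_{m+1}$ (Gross--Wallach) has the form $\bigoplus_{j\geq0}\Sym^{2m+2+2j}\boxtimes\Sym^{2j}$, so the only $K$-type containing an $\SU(2)_\ell$-highest-weight vector of $h_\ell$-weight exactly $2m+2$ is the minimal one $\Sym^{2m+2}\boxtimes\one$ with $j=0$, at which point $\SU(2)_s$-invariance comes for free. Thus $e_\ell$-annihilation and the $h_\ell$-weight computation — precisely what the paper checks — already determine the image up to scalar. You should drop the $\SU(2)_s$ paragraph entirely and strengthen the $K$-type assertion slightly (you wrote $p\geq 2m+2$ without noting the correlated growth of $q$, which is needed to rule out $\Sym^{2m+2}\boxtimes\Sym^q$ for $q>0$).
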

\begin{proof}
    By \cite[Proposition 4.2]{NDPC} and \cite[Theorem 5.2]{NDPC}, it suffices to show that $e_\ell\varphi_N = 0$ and $h_\ell\varphi_N = (2m+2)\phi_N$. Now \cite[p.~1251]{MR4094735} shows that $[e_\ell, h_1(Z)] = [e_\ell, h_{-1}(Z)] = 0$ for all $Z$ in $J_{0,\C}$, which implies that
    $$e_\ell\varphi_N = e_\ell\mathcal D_N x_\ell^2 = \mathcal D_N e_\ell x_\ell^2 = 0.$$
    Moreover, \cite[p.~1251]{MR4094735} also shows that
    $$[h_\ell, h_1(Z)] = h_1(Z)\mbox{ and }[h_\ell, h_{-1}(Z)] = h_{-1}(Z)$$ for all $Z$ in $J_{0,\C},$ which similarly implies that
    \begin{gather*}
        h_\ell\varphi_N = h_\ell \mathcal D_N x_\ell^2 = \mathcal D_N h_\ell x_\ell^2 + 2m \mathcal D_N x_\ell^2 = 2\mathcal{D}_Nx_\ell^2+2m\mathcal{D}_Nx_\ell^2 = (2m + 2) \varphi_N.\qedhere
    \end{gather*}
\end{proof}

\subsection{Archimedean local integrals}\label{ss:archlocalintegral}
Assume that there exists a $G'(\RR)$-orbit of $i$ in $\{E\hookrightarrow J\}$ satisfying $\Hom_{i(T_E)(\RR)}(\sigma^-,\one)\neq0$ (this $G'(\RR)$-orbit is unique by Proposition \ref{prop:epsilon_and_i}), and let $i$ be in this $G'(\RR)$-orbit. Because $G'(\RR)$ is compact, this forces $E\cong\RR^3$. Recall from Definition \ref{defn:localYang} the element $f_i$ of $\sigma^-$. Recall from \S\ref{sss:archYang} the Hermitian pairing $\langle-,-\rangle_\sigma$ on $\sigma_N$ and the fact that $\beta_i$ equals $\langle-,f_i\rangle_\sigma$.

Recall from \S\ref{ss:G2local} that $\mathcal{E}=(a,b,c,d)$ denotes an element of $\X(F)$ such that the associated cubic $F$-algebra is isomorphic to $E$. Write $\mathcal{X}=(a,x,y,d)$ for the element of $\O_{\min}(F)\cap p^{-1}(\mathcal{E})$ corresponding to $i$ under Lemma \ref{lem:integralminorbitfibers}, and recall from Definition \ref{defn:localzetaintegral} the integral $\mathcal{I}(\mathcal{E},-,-)$. 

Recall from \S\ref{ss:qds} the nonzero real number $r$. Write $(-,-)$ for the trace pairing $(X,Y)\mapsto\tr(X\circ Y)$ on $J$.

Write $T'\subseteq G'$ for the subgroup of diagonal matrices, and let $f_0$ be a highest weight vector in $\sigma_N$ with respect to $T'(\RR)$ that is unitary with respect to $\langle-,-\rangle_\sigma$. We now begin calculating our archimedean local integral:
\begin{prop}\label{prop:firststepforlocalzetaarch}
The integral $\mathcal{I}(\mathcal{E},\varphi_N,f_0)$ equals the product of 
\begin{align*}
\left[\frac{|r(ai+b-ci-d)|}{r(ai+b-ci-d)}\right]^{-m-1}K_{-m-1}(|r(ai+b-ci-d)|)
\end{align*}
and
$$\frac{(2ir^2)^m}2\int_{i(T_E)(\RR)\backslash G'(\RR)} \big[(g' \cdot Z_2, x) (g'\cdot Z_1, y) - (g'\cdot Z_2, y) (g'\cdot Z_1, x)\big]^m \overline{\langle g'\cdot f_0, f_i\rangle_\sigma}\, \dd{g'}.$$\end{prop}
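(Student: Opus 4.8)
The plan is to unwind the definition of $\mathcal{I}(\mathcal{E},\varphi_N,f_0)$ by computing $\alpha_{\mathcal{X}}(g'\cdot\varphi_N)$ explicitly, using Pollack's formula for the archimedean Fourier functional (Definition \ref{defn:archalpha}) together with the structure of the raising operator $\mathcal{D}_N$. First I would observe that $g'\in G'(\RR)\subseteq\widetilde{M}^1(\RR)$, so by Lemma \ref{lem:alpha_equivariancearch} we have $\alpha_{\mathcal{X}}(g'\cdot\varphi_N)=\alpha_{g'^{-1}\cdot\mathcal{X}}(\varphi_N)$; alternatively, and more usefully, I would move $g'$ onto the Lie algebra elements and write $g'\cdot\varphi_N = g'\cdot\mathcal{D}_N x_\ell^2 = (\mathrm{Ad}(g')\mathcal{D}_N)(g'\cdot x_\ell^2)$. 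Since $g'\in G'(\RR)$ commutes with $\SU(2)_\ell$ inside $\widetilde{K}$ (by the description of $\widetilde{K}\cap(G(\RR)\times G'(\RR))$ in \S\ref{ss:freudenthal}), we have $g'\cdot x_\ell^2 = x_\ell^2$, so $g'\cdot\varphi_N = (\mathrm{Ad}(g')\mathcal{D}_N) x_\ell^2$. The operators $h_{-1}(Z)$ and $h_1(Z)$ transform under $G'(\RR)$ by $Z\mapsto g'\cdot Z$ (conjugation action on $J_{0,\C}$), so $\mathrm{Ad}(g')\mathcal{D}_N$ is $\mathcal{D}_N$ with $Z_1,Z_2$ replaced by $g'\cdot Z_1, g'\cdot Z_2$.

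Next I would apply $\alpha_{\mathcal{X}}$ to this vector. The key computational input is that $\alpha_{\mathcal{X}}$ is the unique $(\widetilde{N}(\RR),\psi_{\mathcal{X}})$-equivariant functional normalized via Pollack's $K$-Bessel expressions $\widetilde{\mathcal{W}}^{\mathcal{X}}_v$. Acting by $h_1(Z)$ (which lies in $\widetilde{\mathfrak{n}}_\C$ up to the $n_F(Z)$ correction) and $h_{-1}(Z)$ (which lies in $\widetilde{\mathfrak{m}}^0_\C$ up to the $n_H(Z)$ correction) on the lowest $\widetilde{K}$-type vector $x_\ell^2$ and then applying $\alpha_{\mathcal{X}}$ should be exactly the kind of computation Pollack carries out: the $S(Z)$ part contributes a factor involving the pairing $\langle\mathcal{X},\text{(stuff)}\rangle$, i.e. the $(g'\cdot Z,x)$ and $(g'\cdot Z,y)$ pairings, and the raising in the $\ell$-direction turns $x_\ell^2$ into $x_\ell^{2m+2}$. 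The alternating-sum structure of $\mathcal{D}_N$ is designed so that the cross terms assemble into the determinant-like expression $(g'\cdot Z_2,x)(g'\cdot Z_1,y)-(g'\cdot Z_2,y)(g'\cdot Z_1,x)$ raised to the $m$-th power, and the remaining scalar (powers of $r$, factors of $2i$, the $K$-Bessel function $K_{-m-1}$) comes from Pollack's formula \eqref{eq:Pollackformula} for $\widetilde{\mathcal{W}}^{\mathcal{X}}_{-m-1}$ evaluated using $\langle r\mathcal{X}, \widetilde{r}_0(i)\rangle = r(ai+b-ci-d)$ (recalling $\mathcal{X}=(a,x,y,d)$ with $\mathrm{tr}\,x=b$, $\mathrm{tr}\,y=c$, and the pairing \eqref{eq:our_symplectic_form} evaluated against $\widetilde{r}_0(i)=(1,-i,-1,i)$). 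Lemma \ref{lem:archcommutators} guarantees the various $h_{\pm1}(Z_i)$ commute appropriately (the chosen $Z_1,Z_2$ are strictly upper triangular nilpotents satisfying $Z_iZ_j=Z_i\circ Z_j=0$ and $(Z_1+Z_2)^\#=Z_1^\#=Z_2^\#=0$), so the monomials in $\mathcal{D}_N$ are unambiguous and the binomial coefficients can be resummed.

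Then I would substitute this expression for $\alpha_{\mathcal{X}}(g'\cdot\varphi_N)$ back into $\mathcal{I}(\mathcal{E},\varphi_N,f_0)=\int_{i(T_E)(\RR)\backslash G'(\RR)}\alpha_{\mathcal{X}}(g'\cdot\varphi_N)\overline{\beta_{\mathcal{X}}(g'\cdot f_0)}\,\dd{g'}$, using that $\beta_{\mathcal{X}}$ (which is $\beta_i$ in the notation of \S\ref{ss:archlocalintegral}) equals $\langle-,f_i\rangle_\sigma$ by \S\ref{sss:archYang}, so $\overline{\beta_{\mathcal{X}}(g'\cdot f_0)}=\overline{\langle g'\cdot f_0, f_i\rangle_\sigma}$. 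The scalar factor (the $K$-Bessel term and $\tfrac{(2ir^2)^m}{2}$) pulls out of the integral since it does not depend on $g'$, leaving precisely the stated integral. The main obstacle will be the first step: carefully tracking how $\alpha_{\mathcal{X}}$ interacts with repeated application of $h_{-1}(Z)$ and $h_1(Z)$ — in particular correctly bookkeeping the $n_H(Z)$ and $n_F(Z)$ corrections (these are in $\widetilde{\mathfrak{k}}_\C$ and annihilate or act simply on the lowest $\widetilde{K}$-type) versus the genuinely $\widetilde{N}$-raising pieces $S(Z)$, and verifying that the binomial sum collapses to a pure $m$-th power of the bilinear antisymmetric expression. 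For this I would lean heavily on Pollack's explicit computations in \cite[\S6]{MR4094735}, especially \cite[Proposition 6.2.1]{MR4094735} and the surrounding formulas, which were presumably written with exactly this kind of lift in mind; the normalization constants $2ir^2$ and the shift $-m-1$ in the Bessel index should drop out of matching \eqref{eq:Pollackformula} term by term.
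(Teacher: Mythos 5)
Your plan tracks the paper's proof closely. Both arguments rest on the same three ingredients: (i) the replacement of $h_{\pm 1}(Z)$ by the pieces $R(Z)$, $S(Z)$ when acting on $x_\ell^2$, justified by Lemma \ref{lem:archcommutators} together with the fact that $n_H(Z),n_F(Z) \in \mathfrak{su}(6)_\C$ act trivially on the minimal $\widetilde K$-type $\widetilde{\mathbb V}_1 = \Sym^2\boxtimes\one$ (this $\SU(6)$-triviality of $\widetilde{\mathbb V}_1$, rather than ``commuting with $\SU(2)_\ell$,'' is also the precise reason $g'\cdot x_\ell^2 = x_\ell^2$ in your opening step); (ii) Pollack's multiplication formula for $S(Z)$ and raising formula for $R(Z)$ applied to $\widetilde{\mathcal W}^{\mathcal X}_{-v}$ at points of $\widetilde M^1(\RR)$; and (iii) collapsing the alternating binomial sum into the $m$-th power of the antisymmetric pairing. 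The only difference from the paper is a bookkeeping choice: you conjugate $g'$ onto the Lie algebra (using $\Ad(g')R(Z)=R(g'\cdot Z)$, $\Ad(g')S(Z)=S(g'\cdot Z)$, which follow from $g'$ being a Jordan automorphism) before applying $\alpha_{\mathcal X}$, whereas the paper leaves the differential operators fixed, views $\alpha_{\mathcal X}(\mathcal D'_N x_\ell^2)$ as a function via the $\Ind$-realization, and simply evaluates at $\widetilde m = g'$; these produce the identical expression $\langle r\mathcal X, g'\cdot(0,\pm iZ_k,-Z_k,0)\rangle = (x\pm iy, g'\cdot Z_k)$ term by term.
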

\begin{proof}
It will be convenient to describe $\varphi_N$ using another differential operator instead. Write
$$\mathcal D_N' \coloneqq \sum_{j = 0}^m (-1)^j \binom{m}{j} R(Z_1)^jR(Z_2)^{m-j} S(Z_1)^{m-j} S(Z_2)^j\in  (\widetilde{\g}_\C)^{\otimes2m}.$$
Then the $\SU(6)$-invariance of $x_\ell^2$ and Lemma \ref{lem:archcommutators} indicate that $\mathcal D'_N x_\ell^2 = \mathcal D_N x_\ell^2= \varphi_N$, so Definition \ref{defn:archalpha} yields
\begin{align*}
\alpha_{\mathcal{X}}(\widetilde{m}\cdot\varphi_N) = \frac12(\mathcal D_N '\cdot\widetilde{\mathcal{W}}^{\mathcal{X}}_{-1})(\widetilde{m})
\end{align*}
for all $\widetilde{m}$ in $\widetilde{M}(\RR)$. Using the equivariance of $\widetilde{\mathcal{W}}^{\mathcal{X}}$ under left translation by $\widetilde{N}(\RR)$, for all $Z$ in $J_{0,\C}$, we get
\begin{align*}
    (S(Z)\widetilde{\mathcal W}^{\mathcal X}_{-1})(\widetilde{m}) = \langle r\mathcal X, \widetilde{m}\cdot (0, iZ, -Z, 0)\rangle\widetilde{\mathcal W}^{\mathcal X}_{-1}(\widetilde{m}).
\end{align*}
Because $\widetilde{N}/Z$ is abelian, for all $Z'$ in $J_{0,\C}$, the differential operator $S(Z)$ annihilates the function 
\begin{align*}
\widetilde{m}\mapsto\langle r\mathcal X, \widetilde{m}\cdot (0, iZ', -Z', 0)\rangle.
\end{align*}

Next, for all integers $v$, \cite[Corollary 7.6.1]{MR4094735}\footnote{A guide to the notation of \cite[Corollary 7.6.1]{MR4094735}: $D_{Z^*}(E)$ is defined on \cite[p.~1250]{MR4094735}, $M$ is defined in \cite[Theorem 7.5.1]{MR4094735}, and $V(E)^*$ is defined on \cite[p.~1242]{MR4094735}. We take $E=Z$ and $M=\widetilde{m}$. Then $D_{Z^*}(E)=R(Z)$ and $V(E)^*=(0,-iZ,-Z,0)$.} shows that
\begin{align*}
(R(Z)\widetilde{\mathcal{W}}^{\mathcal{X}}_{-v})(\widetilde{m})=\langle r\mathcal X, \widetilde{m}\cdot(0, -iZ, -Z,0)\rangle\widetilde{\mathcal{W}}^{\mathcal{X}}_{-v-1}(\widetilde{m})
\end{align*}
for all $\widetilde{m}$ in $\widetilde{M}^1(\RR)$. For all $k$ and $k'$ in $\{1,2\}$, the endomorphism of $\widetilde{N}/Z$ induced by $R(Z_k)$ annihilates $(0,\pm iZ_{k'},\pm Z_{k'},0)$, so the differential operator $R(Z_k)$ annihilates the function
\begin{align*}
\widetilde{m}\mapsto\langle r\mathcal{X},\widetilde{m}\cdot(0,\pm i Z_{k'},\pm Z_{k'},0)\rangle.
\end{align*}
Altogether, the above computations show that, for all $\widetilde{m}$ in $\widetilde{M}^1(\RR)$,
\begin{align*}
(R(Z_1)^jR(Z_2)^{m-j} S(Z_1)^{m-j} S(Z_2)^j\widetilde{W}_{-1}^{\mathcal{X}})(\widetilde{m})
\end{align*}
equals $\widetilde{W}^{\mathcal{X}}_{-m-1}(\widetilde{m})$ times
\begin{align*}
\langle r\mathcal X, \widetilde{m}\cdot(0, -iZ_1, -Z_1,0)\rangle^j\langle r\mathcal X, \widetilde{m}\cdot(0, -iZ_2, -Z_2,0)\rangle^{m-j}\langle r\mathcal X, \widetilde{m}\cdot (0, iZ_1, -Z_1, 0)\rangle^{m-j}\langle r\mathcal X, \widetilde{m}\cdot (0, iZ_2, -Z_2, 0)\rangle^j.
\end{align*}
Finally, specialize to $\widetilde{m}=g'$ in $G'(\RR)$. Since $g'$ lies in $\widetilde{K}\cap\widetilde{M}^1(\RR)$, it fixes $\widetilde{r}_0(i)$, so $\alpha_{\mathcal{X}}(g'\cdot\varphi_N)$ equals 
\begin{align*}
\widetilde{W}^{\mathcal{X}}_{-m-1}(g') = \left[\frac{|r(ai+b-ci-d)|}{r(ai+b-ci-d)}\right]^{-m-1}K_{-m-1}(|r(ai+b-ci-d)|)
\end{align*}
times
\begin{align*}
&\,\frac12\sum_{j=0}^m(-1)^j\binom{m}{j}r^{2m}(x-iy,g'\cdot Z_1)^j(x-iy,g'\cdot Z_2)^{m-j}(x+iy,g'\cdot Z_1)^{m-j}(x+iy,g'\cdot Z_2)^j \\
=&\,\frac{r^{2m}}2\big[(x-iy,g'\cdot Z_2)(x+iy,g'\cdot Z_1)-(x-iy,g'\cdot Z_1)(x+iy,g'\cdot Z_2)\big]^m\\
=&\,\frac{r^{2m}}2\big[-2i(y,g'\cdot Z_2)(x,g'\cdot Z_1)+2i(x,g'\cdot Z_2)(y,g'\cdot Z_1)\big]^m \\
=&\,\frac{(2ir^2)^m}2\big[(g' \cdot Z_2, x) (g'\cdot Z_1, y) - (g'\cdot Z_2, y) (g'\cdot Z_1, x)\big]^m.
\end{align*}
This immediately yields the desired result.
\end{proof}

To compute the integral from Proposition \ref{prop:firststepforlocalzetaarch}, we use two different models of $\sigma_N$:
\begin{enumerate}
    \item Write $\C^3$ for the standard representation of $\SU(3)$, write $(\C^3)^\vee$ for its dual, and write $\{z_1,z_2,z_3\}$ for the standard basis of $(\C^3)^\vee$. Checking highest weights yields an isomorphism of $\PU(3)$-representations $$\sigma_N \cong \begin{cases}\Sym^{3m}(\C^3)^\vee\otimes {\det}^{\otimes m} & \mbox{when }N\mbox{ is positive,} \\ \Sym^{3m}(\C^3) \otimes \det^{\otimes(-m)} & \mbox{when }N\mbox{ is negative.}\end{cases}$$
    
    \item Write $\lambda$ for the partition $(m,m)$ of $2m$, and write $c_\lambda$ in $\C[S_{2m}]$ for the associated Young symmetrizer  as in \cite[p.~46]{fultonharris1991}. Then the representation $\mathbb{S}^\lambda(\g'_\C)\coloneqq c_\lambda(\g'_\C)^{\otimes 2m}\subseteq(\g'_\C)^{\otimes2m}$ of $\PU(3)$ contains both $(m,m,-2m)$ and $(2m,-m,-m)$ as extremal weights with multiplicity one. Since $Z_1^{\otimes m}\otimes Z_2^{\otimes m}$ in $(\g'_\C)^{\otimes 2m}$ has weight $(m,m,-2m)$ when $N$ is positive and $(2m,-m,-m)$ when $N$ is negative, this implies that $\sigma_N$ is isomorphic to the subrepresentation of $\mathbb{S}^\lambda(\g'_\C)^{\otimes2m}$ generated by $c_\lambda(Z_1^{\otimes m}\otimes Z_2^{\otimes m})$.
\end{enumerate}
\begin{lemma}\label{lem:archmatrixcoeff}
For all $g'$ in $G'(\RR)$, we have $$\big[(g'\cdot Z_2, x) (g'\cdot Z_1, y) - (g'  \cdot Z_2, y)(g' \cdot Z_1,x)\big]^m = Cq(a, b, c, d)^{m/2} \langle g' \cdot f_0 , f_i\rangle_\sigma,$$
where $C$ is a nonzero constant independent of $\mathcal{E}$ and $\mathcal{X}$.
\end{lemma}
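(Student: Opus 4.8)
The plan is to recognize the left side, viewed as a function of $g'$, as a constant multiple of a matrix coefficient of $\sigma_N$, and then to identify that matrix coefficient using the torus $i(T_E)$. First I would identify $J_{0,\C}=\operatorname{M}_3(\C)^{\tr=0}$ with $\g'_\C$ via the conjugation action of $G'$; under this identification the trace pairing $(X,Y)=\tr(X\circ Y)$ becomes a nonzero $G'(\RR)$-invariant symmetric bilinear form. Since $g'\cdot Z_k$ has trace $0$, one may replace $x,y$ by their trace-zero parts $x_0,y_0\in J_{0,\C}$ without changing the left side. Extending $(\cdot,\cdot)$ to $\Lambda^2\g'_\C$ and then to its $m$-th tensor power, the left side becomes $(-1)^m\langle g'\cdot(Z_1\wedge Z_2)^{\otimes m},(x_0\wedge y_0)^{\otimes m}\rangle$. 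Using the second model of $\sigma_N$: in the decomposition $\Lambda^2\g'_\C\cong\Sym^3(\C^3)\oplus\Sym^3(\C^3)^\vee\oplus\g'_\C$, the vector $Z_1\wedge Z_2$ is a highest-weight vector of one of the two $\Sym^3$-type summands (one checks that $e_{13}\wedge e_{23}$, resp.\ $e_{13}\wedge e_{12}$ when $N<0$, is annihilated by the positive root vectors and has weight $(1,1,-2)$, resp.\ $(2,-1,-1)$), so $(Z_1\wedge Z_2)^{\otimes m}$ spans the highest-weight line (for $T'(\RR)$) of the copy $W$ of $\sigma_N$ it generates inside $\Sym^m\Lambda^2\g'_\C$. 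Hence $(Z_1\wedge Z_2)^{\otimes m}=\kappa f_0$ for some $\kappa\in\C^\times$, and the left side equals $(-1)^m\kappa\,\phi(g'\cdot f_0)$, where $\phi\in\Hom_\C(\sigma_N,\one)$ is the functional $v\mapsto\langle\iota(v),(x_0\wedge y_0)^{\otimes m}\rangle$ for an isomorphism $\iota\colon\sigma_N\cong W$.

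Next I would observe that $\phi$ is $i(T_E)(\RR)$-invariant. Indeed $i(T_E)(\RR)$ is the stabilizer of $i$ in $G'(\RR)$ (see \S\ref{ss:unitarysetup}), so it fixes $i(E)$ pointwise, hence fixes $x=i(\alpha)$, $y=-i(\beta)$ by Lemma \ref{lem:integralminorbitfibers}, and therefore fixes $(x_0\wedge y_0)^{\otimes m}$; together with the $G'(\RR)$-invariance of the pairing this gives $\phi\circ t=\phi$ for all $t\in i(T_E)(\RR)$. By Proposition \ref{prop:epsilon_and_i} the space $\Hom_{i(T_E)(\RR)}(\sigma_N,\one)$ is $1$-dimensional and, by \S\ref{sss:archYang}, spanned by $\beta_i=\langle-,f_i\rangle_\sigma$, so $\phi=\mu\beta_i$ for some $\mu\in\C$ and the left side equals $C'\langle g'\cdot f_0,f_i\rangle_\sigma$ with $C'=(-1)^m\kappa\mu$.

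It then remains to see that $C'$ is nonzero and of the form $C\cdot q(a,b,c,d)^{m/2}$ with $C$ independent of $\mathcal{E}$; note $q(\mathcal{E})=\disc>0$ since $E\cong\RR^3$, so the power is unambiguous. For nonvanishing I would show $\mu\neq0$: otherwise $(x_0\wedge y_0)^{\otimes m}$ is orthogonal to $W$, which, since $\Sym^m\Lambda^2\g'_\C$ contains the Cartan powers $\Sym^{3m}(\C^3)$ and $\Sym^{3m}(\C^3)^\vee$ each with multiplicity one and these pair non-degenerately, forces the $\Sym^3(\C^3)$-isotypic component of $x_0\wedge y_0$ in $\Lambda^2\g'_\C$ to vanish. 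But $x_0,y_0$ lie in the Cartan subalgebra $i(E)_{0,\C}$, so $[x_0,y_0]=0$; as the only $\g'_\C$-equivariant map $\Lambda^2\g'_\C\to\g'_\C$ is the bracket, this forces the $\g'_\C$-component of $x_0\wedge y_0$ to vanish as well, while the outer automorphism $X\mapsto-X^{\mathsf T}$ of $\g'_\C$ fixes $x_0\wedge y_0$ and interchanges the two $\Sym^3$-components — so those vanish simultaneously, which is impossible since $x_0\wedge y_0\neq0$ (as $\{1,\alpha,\beta\}$ is an $F$-basis of $E$). For the dependence on $\mathcal{E}$, I would use homogeneity: scaling $\mathcal{X}$ by $t\in\RR^\times$ along the central cocharacter of $\widetilde M$ sends $\mathcal{E}\mapsto t\mathcal{E}$, multiplies the left side by $t^{2m}$ and $q(\mathcal{E})^{m/2}$ by $t^{2m}$ (as $q$ is homogeneous of degree $4$), and leaves $\langle g'\cdot f_0,f_i\rangle_\sigma$ unchanged; combined with the analogous transformation rule under the $M(F)$-action on $\widetilde\X$ (which commutes with $G'$ inside $\widetilde M$) and the fact that all relevant $\mathcal{E}$ form a single $M(F)$-orbit, this pins down $C'=C\cdot q(\mathcal{E})^{m/2}$ with $C$ independent of $\mathcal{E}$.

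The main obstacle is the nonvanishing of $\mu$: a priori the $\Sym^3(\C^3)$-component of $x_0\wedge y_0$ could conceivably vanish, and it is the combination of the abelianness of the Cartan subalgebra $i(E)_{0,\C}$ with the outer-automorphism symmetry of $\g'_\C\cong\sl_3$ that rules this out. The homogeneity bookkeeping in the last step is routine but relies on the explicit formulas for the $\widetilde M$-action on $\widetilde\X$ from \cite{MR4094735}.
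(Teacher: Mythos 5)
Your approach is genuinely different from the paper's and conceptually appealing: you pass from the $\binom{2m}{m,m}$-symmetrized tensor model to $\Sym^m\Lambda^2\g'_\C$, identify the highest-weight line via $Z_1\wedge Z_2\in\Sym^3(\C^3)^\vee\subset\Lambda^2\g'_\C$, and characterize the periodic functional abstractly through $i(T_E)(\RR)$-invariance. The paper instead works inside $\mathbb S^{(m,m)}(\g'_\C)\subset(\g'_\C)^{\otimes 2m}$, expands the Young symmetrizer explicitly (which is how the $[(g'\cdot Z_2,x)(g'\cdot Z_1,y)-\cdots]^m$ expression arises), and then computes the constant outright by evaluating both sides at the complex unipotent $g'=1+s\,\prescript{t}{}{Z_1}+t\,\prescript{t}{}{Z_2}\in G'(\C)$, reading off the $q(\mathcal E)^{m/2}$ factor. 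The direct evaluation is robust; your plan replaces it with two soft arguments, and both have gaps.

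First, the outer-automorphism argument for $\mu\neq0$ is not correct as stated. For $\theta(X)=-X^{\mathsf T}$ to fix $x_0\wedge y_0$ you would need $\theta(x_0)=\pm x_0$ and $\theta(y_0)=\pm y_0$ with matching signs; since $x_0,y_0$ are Hermitian but not generally real symmetric, $\theta(x_0)=-\prescript{t}{}{x_0}=-\overline{x_0}$, which is not $\pm x_0$ in general. The argument does go through after first conjugating by $G'(\RR)$ so that $i(T_E)$ is the diagonal torus and $x_0,y_0$ are real diagonal (then $\theta(x_0)=-x_0$). This replacement is legitimate because both sides of the lemma transform the same way under $(i,\mathcal X,g')\mapsto(h'\cdot i,h'\cdot\mathcal X,h'g')$ (using $f_{h'\cdot i}=h'\cdot f_i$), but you must say so; as written, the claim that $\theta$ fixes $x_0\wedge y_0$ is false. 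Second, and more seriously, the step establishing $C'=C\cdot q(\mathcal E)^{m/2}$ is only sketched. The homogeneity argument along the central cocharacter only pins down $C'(\mathcal E)$ up to a function of $\mathcal E$ modulo scaling; you then invoke the full $M(F)$-action on $\widetilde{\X}$, but that action does not preserve the chosen embedding $i$ (it moves $\mathcal X$ to another element of $\O_{\min}(F)\cap p^{-1}(m\cdot\mathcal E)$ that in general corresponds to a $G'(F)$-translate of $i$), and one must separately track how $\langle g'\cdot f_0,f_i\rangle_\sigma$ and $q(\mathcal E)^{m/2}$ transform (the latter by $\det(m)^m$ under the $\Sym^3\otimes\det^{-1}$ normalization) and check the weights actually agree. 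None of this is carried out; completing it is essentially equivalent in content to the direct evaluation the paper performs, and without it the asserted form of the constant is not established.
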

\begin{proof}
    First, we show that the desired identity holds up to \emph{some} constant, using model (2) for $\sigma_N$ above. Endow $\g'_\C=\operatorname{M}_3(\C)^{\tr=0}$ with the Hermitian pairing $(X,Y)\mapsto\text{tr}( X\cdot\prescript{t}{}{\overline Y} )$, which induces a Hermitian pairing $\langle-,-\rangle_\otimes$ on $(\g'_\C)^{\otimes2m}$ and hence on $\mathbb S^\lambda(\g'_\C)$, and write $\pr_\sigma$ in $\End_{\PU(3)}(\mathbb S^\lambda(\g'_\C))$ for the associated orthogonal projector onto $\sigma_N$. Because $\PU(3)$ preserves $\langle-,-\rangle_\otimes$, its restriction to $\sigma_N\subseteq\mathbb{S}^\lambda(\g'_\C)$ equals a scalar multiple of $\langle-,-\rangle_\sigma$. Moreover, $(x+iy)^{\otimes m}\otimes(x-iy)^{\otimes m}$ is fixed by $i(T_E)(\RR)$, the $i(T_E)(\RR)$-invariant subspace of $\sigma_N$ is $1$-dimensional, and $c_\lambda(Z_1^{\otimes m}\otimes Z_2^{\otimes m})$ is a highest weight vector in $\sigma_N$. This implies that there exists $C_1$ in $\C^\times$ such that, for all $g'$ in $G'(\RR)$, we have
    \begin{align*}
C_1 \langle g'\cdot f_0, f_i \rangle_\sigma &= 
    \langle \pr_\sigma c_\lambda (g'\cdot(Z_1^{\otimes m} \otimes Z_2^{\otimes m})), \pr_\sigma c_\lambda ((x+iy)^{\otimes m} \otimes (x-iy)^{\otimes m})\rangle_{\otimes}\\ &= \langle c_\lambda (g'\cdot( Z_1^{\otimes m} \otimes Z_2^{\otimes m})),c_\lambda((x+iy)^{\otimes m} \otimes (x-iy)^{\otimes m})\rangle_{\otimes}\\ &= 
\langle   c_\lambda ( (g'\cdot Z_1)^{\otimes m} \otimes (g'\cdot Z_2)^{\otimes m}),(x+iy)^{\otimes m} \otimes (x-iy)^{\otimes m}\rangle_{\otimes}.
\end{align*}
A combinatorial exercise using the definition of $c_\lambda$ shows that $c_\lambda ( (g'\cdot Z_1)^{\otimes m} \otimes (g'\cdot Z_2)^{\otimes m})$ equals
\begin{equation*}
    \sum_{j = 0 }^m (-1)^j \binom{m}{j}  \sum_{\sigma_1, \sigma_2\in S_m} \sigma_1\left((g'\cdot Z_1)^{\otimes j}\otimes (g'\cdot Z_2)^{\otimes (m-j)}\right) \otimes \sigma_2\left((g'\cdot Z_2)^{\otimes j} \otimes (g'\cdot Z_1)^{\otimes (m-j)}\right).
\end{equation*}
Combined with the $S_{2m}$-invariance of $\ang{-,-}_\otimes$, this shows that
\begin{align}
    C_1 \langle g'\cdot f_0 , f_i\rangle_\sigma &= \sum_{j = 0}^m (-1)^j \binom{m}{j} (m!)^2 (g'\cdot Z_1, x+iy)^j (g' \cdot Z_2, x+iy)^{m-j} (g'\cdot Z_2, x-iy)^j (g'\cdot Z_1, x-iy)^{m-j}\notag\\
    &= (m!)^2 \big[(g'\cdot Z_2, x +iy)(g' \cdot Z_1, x- iy)-(g'\cdot Z_1, x+iy)(g'\cdot Z_2, x - iy)  \big]^m\label{eq:C1}\\
    &= (-2i)^m (m!)^2 \big[(g'\cdot Z_2, x) (g'\cdot Z_1, y) - (g' \cdot Z_2, y)(g'\cdot Z_1, x)\big]^m.\notag
\end{align}
To finish the proof, we will compute $C_1$ by evaluating at a convenient point. For this, choose $h'$ in $G'(\RR)$ such that $h'\cdot x$ is of the form
$$\begin{bmatrix}
    x_1 && \\ & x_2&\\ &&x_3
\end{bmatrix},$$
where $x_1\geq x_2\geq x_3$. Because $h'\cdot\mathcal{X}=(a,h'\cdot x,h'\cdot y,d)$ and $f_{h'\cdot i}=h'\cdot f_i$, we can replace $x$ with $h'\cdot x$ to assume that $x$ is of the above form. Then $i(T_E)$ equals $T'$, and
$$y = x^\#/a = \begin{bmatrix}
    x_2x_3/ a && \\ &x_1x_3/a &\\&& x_1x_2 / a
\end{bmatrix}.$$

Since \eqref{eq:C1} is a polynomial identity in $g'$, it also holds for $g'$ in $G'(\C)$. Hence we can evaluate at
$$g' = 1+ s \prescript{t}{}{Z}_1 + t\prescript{t}{}{Z}_2 = \begin{cases}\begin{bmatrix}
    1 &  &  \\  & 1 &  \\ s & t & 1 
\end{bmatrix} & \mbox{when }N\mbox{ is positive,} \\ \begin{bmatrix}
    1 &  &  \\ t & 1 &  \\ s &  & 1
\end{bmatrix} & \mbox{when }N\mbox{ is negative,}\end{cases}$$ where $s$ and $t$ are complex numbers.

For the rest of the proof, assume that $N$ is positive; the other case is analogous. Then the right-hand side of \eqref{eq:C1} involves
\begin{align*}
    (g'\cdot Z_2, x)(g'\cdot Z_1, y) - (g'\cdot Z_2, y)(g'\cdot Z_1, x) &= \left( \begin{bmatrix}
         &  &  \\ -s & -t & 1 \\ -st & - t^2 &t
    \end{bmatrix}, x\right)\left( \begin{bmatrix}
        -s & - t & 1 \\  &  &  \\ -s^2 & -st & s
    \end{bmatrix}, y\right)\\&\quad - \left( \begin{bmatrix}
        -s & - t & 1 \\  &  &  \\ -s^2 & -st & s
    \end{bmatrix}, x\right)\left( \begin{bmatrix}
         &  &  \\ -s & -t & 1 \\ -st & - t^2 &t
    \end{bmatrix}, y\right) \\
    &=-ts(x_2 - x_3)(x_1 - x_3)x_2/a +st(x_1 - x_3) (x_2 - x_3)x_1/a \\
    &= st(x_2 - x_3)(x_1 - x_3)(x_1 - x_2)/a\\
    &= st q(a,b,c,d)^{1/2}.
\end{align*}
Finally, we calculate the left-hand side of \eqref{eq:C1} using model (1) for $\sigma_N$ above. Endow $\C^3$ with the standard pairing, which induces a Hermitian pairing $\langle-,-\rangle_{\Sym}$ on $\Sym^{3m}(\C^3)^\vee$. Then $z_3^{3m}$ is a unitary highest weight vector of $\Sym^{3m}(\C)^\vee\otimes\det^{\otimes m}$, and
\begin{align}\label{eq:archfixedvector}
\sqrt{\frac{(3m)!}{m!m!m!}} z_1^m z_2^m z_3^m 
\end{align}
is a unitary $i(T_E)(\RR)$-fixed vector of $\Sym^{3m}(\C)^\vee\otimes\det^{\otimes m}$.

Let $\sigma_N \cong \Sym^{3m} (\C^3)^\vee\otimes {\det}^{\otimes m}$ be the unique unitary isomorphism that sends $f_0$ to $z^{3m}_3$. Then the image of $f_i$ in $\Sym^{3m} (\C^3)^\vee\otimes {\det}^{\otimes m}$ is a $\C^1$-multiple of (\ref{eq:archfixedvector}), where the multiple does not depend on $\mathcal{X}$. After replacing $f_i$ with this $\C^1$-multiple, we get
\begin{align*}
    \langle g'\cdot f_0, f_i\rangle_\sigma &= \sqrt{\frac{(3m)!}{m!m!m!}} \langle g'\cdot z_3^{3m}, z_1^m z_2^mz_3^m \rangle_{\Sym} \\
    &= \sqrt{\frac{(3m)!}{m!m!m!}} \langle (-sz_1 - tz_2 + z_3)^{3m}, z_1^mz_2^m z_3^m \rangle_{\Sym} \\
    &= s^m t^m \sqrt{\frac{(3m)!}{m!m!m!}}^3 \langle z_1^m z_2^m z_3^m, z_1^mz_2^mz_3^m \rangle_{\Sym}\\
    &= s^m t^m \sqrt{\frac{(3m)!}{m!m!m!}}.
\end{align*}
We conclude by taking $s=t=1$ and comparing both sides of \eqref{eq:C1}.
\end{proof}
By putting everything together, we finally obtain:
\begin{thm}\label{thm:arch_integral}
    The integral $\mathcal{I}(\mathcal{E},\varphi_N,f_0)$ equals the product of 
\begin{align*}
\left[\frac{|r(ai+b-ci-d)|}{r(ai+b-ci-d)}\right]^{-m-1}K_{-m-1}(|r(ai+b-ci-d)|)
\end{align*}
and
$$C_N \cdot q(a,b,c,d)^{m/2},$$ where $C_N$ is a nonzero constant independent of $\mathcal{E}$ and $\mathcal{X}$. 
\end{thm}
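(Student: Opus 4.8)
The plan is to deduce Theorem~\ref{thm:arch_integral} by substituting Lemma~\ref{lem:archmatrixcoeff} into Proposition~\ref{prop:firststepforlocalzetaarch} and then evaluating the resulting integral over the compact group $G'(\RR)$ by Schur orthogonality. Proposition~\ref{prop:firststepforlocalzetaarch} already exhibits $\mathcal{I}(\mathcal{E},\varphi_N,f_0)$ as the product of the claimed $K$-Bessel factor $\left[\frac{|r(ai+b-ci-d)|}{r(ai+b-ci-d)}\right]^{-m-1}K_{-m-1}(|r(ai+b-ci-d)|)$ with
\[
\frac{(2ir^2)^m}{2}\int_{i(T_E)(\RR)\backslash G'(\RR)}\big[(g'\cdot Z_2,x)(g'\cdot Z_1,y)-(g'\cdot Z_2,y)(g'\cdot Z_1,x)\big]^m\,\overline{\langle g'\cdot f_0,f_i\rangle_\sigma}\,\dd{g'},
\]
so the whole task reduces to showing this integral equals $q(a,b,c,d)^{m/2}$ times a nonzero constant independent of $\mathcal{E}$ and $\mathcal{X}$.

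First I would use Lemma~\ref{lem:archmatrixcoeff} to replace the bracketed $m$-th power by $C\,q(a,b,c,d)^{m/2}\langle g'\cdot f_0,f_i\rangle_\sigma$, with $C\in\C^\times$ independent of the data; the integrand then becomes $C\,q(a,b,c,d)^{m/2}\,|\langle g'\cdot f_0,f_i\rangle_\sigma|^2$. Since the relevant $G'(\RR)$-orbit of $i$ exists and $G'(\RR)$ is compact, we have $E\cong\RR^3$, so $i(T_E)(\RR)$ is a compact subgroup, and $f_i$ is $i(T_E)(\RR)$-fixed because $\beta_i=\langle-,f_i\rangle_\sigma$ is $i(T_E)(\RR)$-invariant and $\langle-,-\rangle_\sigma$ is nondegenerate; thus $|\langle g'\cdot f_0,f_i\rangle_\sigma|^2$ descends to $i(T_E)(\RR)\backslash G'(\RR)$. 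Schur orthogonality for the finite-dimensional irreducible unitary representation $\sigma^-=\sigma_N$ of $G'(\RR)$ then gives
\[
\int_{G'(\RR)}|\langle g'\cdot f_0,f_i\rangle_\sigma|^2\,\dd{g'}=\frac{\vol(G'(\RR))}{\dim\sigma_N}\,\langle f_0,f_0\rangle_\sigma\,\langle f_i,f_i\rangle_\sigma=\frac{\vol(G'(\RR))}{\dim\sigma_N},
\]
using that $f_0$ is a unit vector by construction and $f_i$ is a unit vector by \S\ref{sss:archYang}; dividing by $\vol(i(T_E)(\RR))$ gives the value of the integral over $i(T_E)(\RR)\backslash G'(\RR)$. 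Because $E\cong\RR^3$ is fixed and all $i$ in the $G'(\RR)$-orbit are conjugate, the quantities $\vol(G'(\RR))$, $\vol(i(T_E)(\RR))$, and $\dim\sigma_N$ depend only on $\chi$ and $G'$, so this constant is independent of $\mathcal{E}$ and $\mathcal{X}$, and it is nonzero since $\sigma_N$ is irreducible and $f_0,f_i\ne0$. Collecting everything yields the theorem with $C_N=\frac{(2ir^2)^m}{2}\cdot C\cdot\frac{\vol(G'(\RR))}{\vol(i(T_E)(\RR))\dim\sigma_N}$.

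There is no real analytic difficulty: $G'(\RR)$ is compact, so every integral in sight converges absolutely. The only point worth highlighting is that the argument is forced to pass through the \emph{squared} matrix coefficient $|\langle g'\cdot f_0,f_i\rangle_\sigma|^2$ rather than $\langle g'\cdot f_0,f_i\rangle_\sigma$ itself---the global unfolding pairs the theta kernel against $\overline{\mathcal{P}_i}$, so locally the matrix coefficient produced by the raising operator $\mathcal{D}_N$ meets its own complex conjugate---and Schur orthogonality only controls its $L^2$-norm. This is precisely why Theorem~\ref{thm:main} is stated with absolute values, and it is the only conceptual subtlety in the computation; everything else is bookkeeping of constants once Proposition~\ref{prop:firststepforlocalzetaarch} and Lemma~\ref{lem:archmatrixcoeff} are established.
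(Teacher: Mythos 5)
Your proposal is correct and follows essentially the same approach as the paper: substitute Lemma~\ref{lem:archmatrixcoeff} into Proposition~\ref{prop:firststepforlocalzetaarch} and evaluate the resulting squared matrix-coefficient integral by Schur orthogonality over the compact group $G'(\RR)$, yielding $\dim(\sigma_N)^{-1}$ under the paper's measure conventions. The only cosmetic difference is that the paper integrates directly over $i(T_E)(\RR)\backslash G'(\RR)$ rather than over $G'(\RR)$ and then dividing by $\vol(i(T_E)(\RR))$, but these coincide since the integrand is $i(T_E)(\RR)$-invariant.
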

\begin{proof}
    After combining Proposition \ref{prop:firststepforlocalzetaarch} and Lemma \ref{lem:archmatrixcoeff}, we conclude by observing that
    \begin{align*}
        \int _{i(T_E)(\RR)\backslash G'(\RR)} \langle g' \cdot f_0, f_i\rangle_\sigma \overline{\langle g' \cdot f_0, f_i\rangle_\sigma } \dd{g'} &= \dim(\sigma_N)^{-1}\vol(i(T_E)(\RR)\backslash G'(\RR))\langle f_0, f_0 \rangle_\sigma \overline{\langle f_i, f_i\rangle_\sigma}\\
        &= \dim(\sigma_N)^{-1}.\qedhere
    \end{align*}
\end{proof}
\bibliographystyle{amsplain}
\bibliography{bibliography}
\end{document}